\newtheorem{thm}{Theorem}[section]
\newtheorem{corollary}[thm]{Corollary}
\newtheorem{lemma}[thm]{Lemma}
\newtheorem{definition}[thm]{Definition}
\newtheorem{example}[thm]{Example}
\newtheorem{remark}[thm]{Remark}
\title [On  pairing on the Grothendieck ring of  Lie supergroup $GL(m,n)$.]{Canonical bilinear form  and Euler characters }
\author{A.N. Sergeev}\address{Department of Mathematics, Saratov State University, Astrakhanskaya 83, Saratov 410012   Russian Federation.}
 \email{SergeevAN@info.sgu.ru}
\begin{document}

\begin{abstract} An explicit formula for the canonical bilinear form on the Grothendieck ring of the Lie supergroup $GL(n,m)$ is given. As an application we  get an algorithm for the decomposition Euler  characters in terms of charctrers of irreducible modules in the category of partially polynomial modules. 
 \end{abstract}
\maketitle
\tableofcontents
\section{Introduction}
Let  $\mathcal F$ be the category of finite dimensional modules over  complex algebraic group $GL(n)$ and  $K(\mathcal F)$ be   its  Grothendieck ring.   There exists  a natural pairing on the ring  $K(\mathcal F)$
$$
([U],[V])= \dim Hom_{GL(n)}(U,V)
$$
Let us  identify  the ring  $K(\mathcal F)$ with the ring of symmetric Laurent polynomials 
$
\Bbb Z[x_1^{\pm1},\dots, x_n^{\pm1}]^{S_n}
$
 then the above pairing in  terms of characters  can be expressed in the following form
$$
([U],[V])=\left[(ch\, U)^* ch\,V \prod_{i\ne j}\left(1-\frac{x_i}{x_j}\right)\right]_0
$$
where $[f]_0$ means  the constant term of Laurent polynomial and $f^*(x_1,\dots,x_n)=f(x_1^{-1},\dots,x_n^{-1})$ (see \cite {Mac, Mac1}). This formula is very interesting from many points of view. 
On one side it allows to connect problems in representation theory to some problems with symmetric functions and their generalisations including Jack and Macdonald polynomials. On the other side  the above formula can be extended to the root system of any semisimple Lie algebra and the corresponding analogues of symmetric polynomials.
The main goal  of this paper is to prove the same kind of formula for complex algebraic Lie supergroup $GL(m,n)$ and to illustrate some of its applications. The category of finite dimensional representations of $GL(m,n)$ is not semisimple. Therefore   in this case  we have  the natural pairing only between projective modules $P(\mathcal F)$ and finite dimensional modules $K(\mathcal F)$ (see \cite{Brun, GS3})
$$
P(\mathcal F)\times K(\mathcal F)\longrightarrow \Bbb Z,\quad ([U],[V])= \dim Hom_{GL(m,n)}(U,V)
$$
where  $[L]$ means the class of module $L$ in appropriate version of Grothendieck ring. We also should mention that  the category of partially polynomials modules defined below is a convenient  object from the point of view of our bilinear form.
\section{Preliminaries}

Instead of  general linear complex algebraic supergroup $GL(m,n)$ it is more convenient to deal with the complex Lie superalgebra $\frak{gl}(m,n)$. So we will consider only finite dimensional representations of $\frak{gl}(n,m)$ such that every one of them  can be lifted to  the representation of $GL(m,n)$.

Let us remind that the Lie superalgebra $\frak{gl}(m,n)$ is the Lie  superalgebra of the  linear transformations of a $\Bbb Z_2$ graded vector space $V=V_0\oplus V_1$ ($V$ is also called the standard representation of $\frak g$). We have 
$$
\frak g_{\bar0}=\frak{gl}(m)\oplus\frak{gl}(n),\quad\frak g_{\bar1}=V_0\otimes V_1^*\oplus V_1\otimes V_0^*
$$ 
We also have $\Bbb Z$ graded decomposition 
$
\frak g=\frak g_{-1}\oplus \frak g_0\oplus \frak g_1
$
where
$$
\frak g_{-1}=V_1\otimes V^*_0,\,\, \frak g_{1}=V_0\otimes V^*_1,\,
$$
Let us fix  bases in $V_0=<e_1,\dots,e_m>$ and $V_1=<f_1,\dots,f_n>$ respectively. 
Let  $\frak{b}$ be the subalgebra of upper triangular matrix in $\frak{gl}(m,n)$ and $\frak{k}$ be  the    subalgebra of diagonal matrix in  $\frak{gl}(m,n)$ in the above basis. By   $\varepsilon_1,\dots,\varepsilon_m,\delta_1,\dots,\delta_n$  we will denote  the weights of standard representation with respect  to $\frak k$. The corresponding system of positive roots  $R^+=R^+_0\cup R^+_1$  of $\frak{gl}(m,n)$ can  be described in the following way
$$
R^+_{0}=\{\varepsilon_i-\varepsilon_j\,:\, 1\le i< j\le m\,:\, \delta_k-\delta_l,\, 1\le k<l\le n \}
$$
$$
R_1^+=\{\varepsilon_i-\delta_k,\, 1\le i\le m,\,1\le k\le n\}
$$

 Let also  
$$
P=\{\chi=\lambda_1\varepsilon_1+\dots+\lambda_m\varepsilon_m+\mu_1\delta_1+\dots+\mu_n\delta_n,\mid n_i,m_j\in \Bbb Z\}
$$
be the weight lattice and 
$$
P^+=\{\chi\in P\mid \lambda_i-\lambda_j\ge0,\,i<j\,:\mu_k-\mu_l\ge0,\,k<l\}
$$
be the set of highest weights.

We will use the following parity on the weight lattice due to C. Gruson  and V. Serganova \cite{GS1}  and Brundan and Stroppel \cite{BS} by saying  that $\varepsilon_i$ (resp. $\delta_j$) is even (resp. odd). It is easy to check that every finite dimensional module $L$ can be represented in the form
$$
L=L^+\oplus L^{-}
$$
where $L^+$ is the submodule of $L$ in which weight space has the same parity as the corresponding weight and $L^{-}$ is the submodule in which  the parities differ.  We should note that this construction is a particular case of Deligne construction category $Rep(G,z)$ from the paper \cite{D}  for $G=GL(m,n)$ and $z=diag(\underbrace{1,\dots,1}_{m},\underbrace{-1,\dots,-1}_{n}).$

Let us denote by $\mathcal F$ the category of finite dimensional modules over $\frak{gl}(n,m)$ such that every module in $\mathcal F$ is semisimple over Cartan  subalgebra $\frak k$ and and all  its weights are in $P$.
By  $K(\mathcal F )$ we will denote the quotient of the Grothendieck ring of   $\mathcal F$ by the relation $[L]-[\Pi(L)]=0$ where $\Pi(L)$ is the module  with the shifted parity $\Pi(L)_0=L_1, \Pi(L)_1=L_0$ and $x*v=(-1)^{p(x)}xv,x\in\mathfrak{gl}(m,n).$ For every $L\in \mathcal F$ we can define 
$$
ch\,L=\sum_{\chi}\dim L_{\chi}e^{\chi}
$$
where the sum is taken over all weights of $L$. It is easy to see that $ch\,L$ is well defined function on $K(\mathcal F)$.

The ring $K(\mathcal F)$ can be describe explicitly in the following way.  Let 
$$
P_{m,n}=\Bbb Z[x_1^{\pm1},\dots, x_m^{\pm1},\, y_1^{\pm1},\dots, y_n^{\pm1}]
$$
be the ring of Laurent polynomials  in variables $x_1,\dots,x_m$ and $y_1,\dots, y_n.$ 

If we set $x_i=e^{\varepsilon_i},\, y_j=e^{\delta_j}$ then we get a character map 
$$
ch : K(\mathcal F)\longrightarrow P_{m,n}
$$
 Let   also 
$$
\Lambda^{\pm}_{m,n}=\{f\in P_{m,n}^{S_m\times S_n}\mid x_i\frac{\partial f}{\partial x_i}+y_j\frac{\partial f}{\partial y_j}\in(x_i+y_j) \}
$$
be the subring of $P_{m.n}$ of supersymmetric Laurent  polynomials. 
\begin{thm}\cite{SV1} The   ring  $K(\mathcal F)$ is isomorphic to the ring $\Lambda^{\pm}_{m,n}$ under the character map.
\end{thm}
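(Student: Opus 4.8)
The plan is to show that the character map is an injective ring homomorphism whose image is exactly $\Lambda^{\pm}_{m,n}$; injectivity and the inclusion $ch(K(\mathcal F))\subseteq\Lambda^{\pm}_{m,n}$ are fairly soft, and the real content is surjectivity. For injectivity, note that $K(\mathcal F)$ is the free abelian group on the classes $[L(\lambda)]$, $\lambda\in P^+$, where $L(\lambda)$ is the simple module of highest weight $\lambda$ (the classes $[L(\lambda)]$ and $[\Pi L(\lambda)]$ being identified by the defining relation). Since $\dim L(\lambda)_{\lambda}=1$ and every other weight $\mu$ of $L(\lambda)$ satisfies $\lambda-\mu\in\mathbb Z_{\ge0}R^{+}$, the family $\{ch\,L(\lambda)\}$ is unitriangular with respect to this partial order on $P^{+}$ (pick a maximal element in a hypothetical finite dependence), hence linearly independent over $\mathbb Z$, so $ch$ is injective. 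As $ch(U\otimes V)=ch\,U\cdot ch\,V$ and $ch(\Pi L)=ch\,L$, the map $ch$ is a well defined ring monomorphism, and only the determination of its image remains.

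Next I would check that the image lies in $\Lambda^{\pm}_{m,n}$. Restricting any $L\in\mathcal F$ to $\mathfrak g_{\bar0}=\mathfrak{gl}(m)\oplus\mathfrak{gl}(n)$ gives $\dim L_{\chi}=\dim L_{w\chi}$ for $w\in S_{m}\times S_{n}$, so $ch\,L\in P_{m,n}^{S_{m}\times S_{n}}$. For the supersymmetry relation, fix an isotropic odd root $\alpha=\varepsilon_{i}-\delta_{j}$ and root vectors $e=e_{\alpha}$, $f=e_{-\alpha}$, so that $e^{2}=f^{2}=0$ and $ef+fe=h_{\alpha}\in\mathfrak k$ acts on a weight vector of weight $\chi$ by $\lambda_{i}+\mu_{j}$. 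If $\lambda_i+\mu_j\neq0$ and $ev=0$ then $v=(\lambda_i+\mu_j)^{-1}e(fv)\in\operatorname{im}e$, so the cohomology $\ker e/\operatorname{im}e$ is supported on weights with $\lambda_{i}+\mu_{j}=0$, while a complement of $\ker e$ is mapped isomorphically by $e$ onto $\operatorname{im}e$. Choosing vector space splittings one gets $ch\,L=A+(1+e^{\alpha})B$ with $A$ supported on $\{\lambda_{i}+\mu_{j}=0\}$ and $B\in P_{m,n}$, where $1+e^{\alpha}=1+x_{i}y_{j}^{-1}=y_{j}^{-1}(x_{i}+y_{j})$. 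Now $x_{i}\partial_{x_{i}}+y_{j}\partial_{y_{j}}$ kills every monomial $x_{i}^{\lambda_{i}}y_{j}^{-\lambda_{i}}(\text{rest})$ appearing in $A$, and the product rule gives $(x_{i}\partial_{x_{i}}+y_{j}\partial_{y_{j}})\bigl((x_{i}+y_{j})g\bigr)=(x_{i}+y_{j})\bigl(g+(x_{i}\partial_{x_{i}}+y_{j}\partial_{y_{j}})g\bigr)$, so the contribution of $(1+e^{\alpha})B$ lies in the ideal $(x_{i}+y_{j})$. Hence $x_{i}\partial_{x_{i}}(ch\,L)+y_{j}\partial_{y_{j}}(ch\,L)\in(x_{i}+y_{j})$ for all $i,j$, i.e.\ $ch\,L\in\Lambda^{\pm}_{m,n}$.

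For surjectivity, since $ch$ is a ring homomorphism it is enough to exhibit characters that generate $\Lambda^{\pm}_{m,n}$ as a $\mathbb Z$-algebra. I would use the standard modules: the exterior and symmetric powers $\Lambda^{k}(V)$, $S^{k}(V)$ and their duals, together with the Berezinian $\det=x_{1}\cdots x_{m}(y_{1}\cdots y_{n})^{-1}=ch\,\mathrm{Ber}(V)$, which is invertible in $\Lambda^{\pm}_{m,n}$. One has $\sum_{k}ch\,\Lambda^{k}(V)\,t^{k}=\prod_{i}(1+x_{i}t)/\prod_{j}(1-y_{j}t)=:E(t)$ and $\sum_{k}ch\,S^{k}(V)\,t^{k}=\prod_{j}(1+y_{j}t)/\prod_{i}(1-x_{i}t)=:H(t)$, whence $E(t)H(-t)=1$; these super Newton relations, together with the super Jacobi–Trudi formula for hook Schur functions, show (following Berele–Regev, Stembridge for the polynomial part, and Sergeev–Veselov for the Laurent part) that these finitely many characters generate $\Lambda^{\pm}_{m,n}$ over $\mathbb Z$. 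Equivalently, the Kac modules $K(\lambda)=U(\mathfrak g)\otimes_{U(\mathfrak g_{0}\oplus\mathfrak g_{1})}L_{0}(\lambda)$ form a second $\mathbb Z$-basis of $K(\mathcal F)$ — their classes differ from the $[L(\mu)]$ by a locally unitriangular integer matrix — so $ch(K(\mathcal F))=\operatorname{span}_{\mathbb Z}\{\,s_{\lambda}(x)s_{\mu}(y)\prod_{i,j}(1+y_{j}x_{i}^{-1})\,\}$, and one then identifies this span with $\Lambda^{\pm}_{m,n}$ by reducing to the polynomial ring $\Lambda_{m,n}$ (where the hook Schur functions give an explicit $\mathbb Z$-basis) after twisting by a power of $\det$.

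The routine parts are the first two paragraphs; the main obstacle is this last step, i.e.\ pinning down the \emph{integral} structure of $\Lambda^{\pm}_{m,n}$ — producing a $\mathbb Z$-basis, or $\mathbb Z$-algebra generators, and matching them with actual modules — and, within it, the reduction from supersymmetric Laurent polynomials to supersymmetric polynomials via the Berezinian. The latter needs care because no single power of $\det$ clears all denominators of a given $f\in\Lambda^{\pm}_{m,n}$; one must argue with $\mathbb Z$-linear combinations of the elements $\det^{k}\cdot(\text{supersymmetric polynomial})$, using the supersymmetry condition to control the Newton polytope of $f$.
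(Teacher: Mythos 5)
The paper offers no proof of this statement at all: it is quoted from Sergeev--Veselov \cite{SV1}, followed only by a remark that the conventions there (supercharacters, a slightly different version of the Grothendieck ring) differ harmlessly from the ones used here. So your proposal must be judged on its own merits. Your first two paragraphs are correct and standard: injectivity via unitriangularity of $\{ch\,L(\lambda)\}$ with respect to the dominance order, and the containment $ch(K(\mathcal F))\subseteq\Lambda^{\pm}_{m,n}$ via the $\mathfrak{sl}(1,1)$-subalgebra attached to an isotropic root $\varepsilon_i-\delta_j$, writing $ch\,L=A+(1+x_iy_j^{-1})B$ with $A$ supported on $\lambda_i+\mu_j=0$. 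As you say, the real content is surjectivity.

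There your proposal contains one genuine error and one honest deferral. The error is the sentence beginning ``Equivalently, the Kac modules $K(\lambda)$ form a second $\mathbb Z$-basis of $K(\mathcal F)$.'' They do not: the matrix $[K(\lambda)]=\sum_\mu n_{\lambda\mu}[L(\mu)]$ is unitriangular with finite rows, but its inverse has infinite columns, because the relevant partial order has infinite descending chains inside an atypical block. Already for $\mathfrak{gl}(1,1)$ one has $[K(c)]=[L(c)]+[L(c-1)]$, so $[L(c)]=[K(c)]-[K(c-1)]+[K(c-2)]-\cdots$ never terminates; correspondingly the $\mathbb Z$-span of the elements $\prod_{i,j}(1+y_jx_i^{-1})\,s_\lambda(x)s_\mu(y)$ does not contain $1=ch(\text{trivial module})$ and is a proper subgroup of $\Lambda^{\pm}_{m,n}$. (This is exactly why the proof of Theorem \ref{form} in the paper must resort to infinite Kac resolutions or a completion argument rather than a finite change of basis.) With that route closed, what remains is your first one: producing $\mathbb Z$-algebra generators, or a $\mathbb Z$-basis, of $\Lambda^{\pm}_{m,n}$ realized by actual modules, including the delicate passage from supersymmetric polynomials to supersymmetric Laurent polynomials via the Berezinian. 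You correctly flag this as the main obstacle but do not carry it out; it is precisely the nontrivial content of the theorem in \cite{SV1}. As a citation-level argument this matches what the paper itself does, but as a proof it is incomplete, and the incorrect ``equivalent'' reformulation should be deleted.
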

\begin{remark} Actually in the paper \cite{SV1}  slightly different versions of Grothen-dieck ring  and  the algebra  $\Lambda^{\pm}_{m,n}$  were considered. But it is easy to check that  they are isomorphic to our ones. We prefer to use characters  instead of supercharacters in this paper in order to avoid some unnecessary  signs.
\end{remark}
It will be needed later an explicit description of  the projective covers of the  irreducible finite dimensional modules due to Brundan \cite{Brun}. We give the description here in a slightly different way. 

First let us for  any $\chi\in P^+$ define a pair of sets 
$$
A=\{(\chi+\rho,\varepsilon_1),\dots, (\chi+\rho,\varepsilon_m)\},\quad B=\{(\chi+\rho,\delta_1),\dots, (\chi+\rho,\delta_n)\}
$$
where 
$$
\rho=\frac12\sum_{\alpha\in R_0^+}\alpha-\frac12\sum_{\alpha\in R_1^+}\alpha+\frac12(n-m+1)(\sum_{i=1}^m\varepsilon_i-\sum_{j=1}^n\delta_j)
$$
$$
=\sum_{i=1}^m(1-i)\varepsilon_i+\sum_{j=1}^n(m-j)\delta_j
$$
Our $\rho$ is slightly different from the standard one but it is more convenient since the elements of $A$ and $B$ are integers.
So instead of highest weights we will use the set of pairs $(A,B)$ such that  $A,B\subset\Bbb Z$ and $|A|=m,\,|B|=n$.  We will  also use the language of diagrams  which is due to Brundun and Stroppel \cite{BS} but we will use it  here  in a form due  to I. Musson and V. Serganova  \cite{MS}.

\begin{definition} Let $(A,B)$ be a pair of subsets in $\Bbb Z$ such that $|A|=m,\, |B|=n$. Then the  corresponding diagram is the following function on $\Bbb Z$
$$
f(x)=\begin{cases} \times,\,\,x\in A\cap B\\
\circ,\,\,x\in A'\cup B'\\
>,\,\,x\in A\setminus B\\
<,\,\, x\in B\setminus A
\end{cases}
$$
\end{definition}
Let us also set 
$$
\varphi(\times)=1,\,\varphi(\circ)=-1,\,\,\varphi(>)=\varphi(<)=0
$$
$$
[a,b]=\{c\in\Bbb Z\mid a\le c\le b\},\,[a,b)=\{c\in\Bbb Z\mid a\le c< b\},
$$
$$
\,(a,b)=\{c\in\Bbb Z\mid a< c<b\}
$$
and for integers $a<b$ let us define a transposition
$$
\pi_{a}^b:\Bbb Z\longrightarrow\Bbb Z,\quad \pi_a^b(x)=\begin{cases}x,\,x\ne a,b\\
b,\,x=a\\
a,\,x=b
\end{cases}
$$
\begin{definition}\label{defadm}  We will call a transposition $\pi_a^b$   an  admissible for $f$ if $a\in f^{-1}(\times),\,b\in f^{-1}(\circ)$ 
and  the following conditions are fulfilled
$$
 b>a,\,\, \sum_{i\in [a,b]}(\varphi\circ f)(i)=0,\,\, \sum_{i\in[a,c]}(\varphi\circ f)(i)>0,\, \text{for any}\,\, c\in[a,b).
$$
Since $b$ is uniquely defined by $f$ and $a$ we sometimes will omit $b$.
 \end{definition}
 
 The following Lemma easily follows from the  definition above.
 \begin{lemma}\label{prop} The following statements hold true
 
 $1)$ If $\pi_a^b,\, \pi_{c}^d$ are two admissible transpositions for $f$  then one of the following conditions is fulfilled
 \begin{equation}\label{cond0}
[a,b]\cap [c,d]=\emptyset,\,\,\, [a,b]\subset (c,d),\,\,\,[c,d]\subset (a,b)
\end{equation}
$2)$ Let $\pi_a^b$ be an admissible transposition for $f$ and $d\in[a,b]$ be such that $f(d)=\circ$. Then there exist an admissible  transposition for $f$  of the form $\pi_c^d$.

 \end{lemma}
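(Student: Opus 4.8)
The plan is to recast admissibility in terms of a single \emph{height function} and then read off both statements as elementary facts about lattice paths. Set $g=\varphi\circ f:\Bbb Z\to\{1,-1,0\}$ and fix any $h:\Bbb Z\to\Bbb Z$ with $h(x)-h(x-1)=g(x)$ (such an $h$ is unique up to an additive constant, which plays no role). For integers $p\le q$ we have $\sum_{i\in[p,q]}(\varphi\circ f)(i)=h(q)-h(p-1)$, so Definition \ref{defadm} takes the form
$$
\pi_a^b\ \text{is admissible}\iff a<b,\quad h(b)=h(a-1),\quad h(e)>h(a-1)\ \text{for all}\ e\in[a,b-1].
$$
(The conditions $f(a)=\times$, $f(b)=\circ$ are then automatic, since $h$ must jump up by $1$ at $a$ and down by $1$ at $b$.) Picturing $\times$ as an up-step and $\circ$ as a down-step, with $>$ and $<$ neutral, an admissible $\pi_a^b$ is precisely a matched pair of steps enclosing an excursion of $h$ strictly above the common value $t:=h(a-1)=h(b)$, which I will call the \emph{level} of $\pi_a^b$. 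One consequence I will use: since $h>t$ on $[a,b-1]$ while $h(b)=t$, the index $b$ is the first index exceeding $a-1$ at which $h$ takes the value $h(a-1)$; in particular $a$ determines $b$.

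For part $1$, take two distinct admissible transpositions; by symmetry assume $a\le c$, and observe that $a=c$ would force $b=d$ by the last remark, so $a<c$. If $c\ge b$, then $c=b$ is impossible (since $f(c)=\times\ne\circ=f(b)$), hence $b<c\le d$ and $[a,b]\cap[c,d]=\emptyset$. If $c<b$, then $c-1\in[a,b-1]$, so the level $s:=h(c-1)$ of $\pi_c^d$ satisfies $s>t$; moreover $d\ge b$ is impossible, for then $b\in[c,d]$ would give $h(b)\ge s>t$, contradicting $h(b)=t$. Hence $d<b$, so $a<c<d<b$, i.e.\ $[c,d]\subset(a,b)$. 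This is exactly the trichotomy \eqref{cond0}.

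For part $2$, let $\pi_a^b$ be admissible of level $t$ and let $d\in[a,b]$ with $f(d)=\circ$; since $f(a)=\times$ we get $d\in(a,b]$, and if $d=b$ we take $c=a$. So assume $d\in[a+1,b-1]$; then $s:=h(d)>t$, and $g(d)=-1$ gives $h(d-1)=s+1$. Since $h$ has steps in $\{-1,0,1\}$ and runs from $h(a-1)=t<s$ at $a-1$ to $h(d-1)=s+1>s$ at $d-1$, a discrete intermediate value argument gives that $\{e\in[a-1,d-1]:h(e)=s\}$ is nonempty; let $c-1$ be its largest element. Then $c\in[a,d)$ (indeed $c\ne d$, since $h(d-1)=s+1\ne s$), $h(c-1)=s=h(d)$, and $h\ne s$ on $[c,d-1]$ by maximality. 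A second, similar argument strengthens this: if $h$ dropped to a value below $s$ somewhere on $[c,d-1]$, it would have to climb back through the value $s$ before reaching $s+1$ at $d-1$, contradicting the choice of $c-1$; hence $h(e)>s$ for every $e\in[c,d-1]$. Consequently $g(c)=h(c)-h(c-1)\ge1$, so $g(c)=1$ and $f(c)=\times$, and the conditions $h(c-1)=h(d)$ together with $h>s$ on $[c,d-1]$ say precisely that $\pi_c^d$ is admissible.

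I expect the only genuine work to be in part $2$: the two discrete intermediate value arguments that locate $c$ and promote $h\ne s$ to the strict inequality $h>s$ on the whole span $[c,d-1]$, together with the few degenerate cases ($c=b$ in part $1$, and $d=b$, $c=d$ in part $2$), each of which is killed by a parity- or value-mismatch. Part $1$ is essentially immediate once the reformulation is in hand, and throughout I would argue entirely with $h$ and its step set $\{-1,0,1\}$, so that no casework on the four symbols $\times,\circ,>,<$ is needed beyond the two automatic-parity observations above.
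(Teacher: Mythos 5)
Your proof is correct. Note that the paper itself offers no argument for this lemma at all --- it is introduced with ``the following Lemma easily follows from the definition above'' and no proof environment follows --- so there is nothing to compare against; what you have written is a complete verification of a claim the author leaves to the reader. Your reformulation via the height function $h$ (with $\times$ an up-step, $\circ$ a down-step, $>$ and $<$ neutral) is exactly the right picture: it is the standard Dyck-path/cap-diagram interpretation underlying the Brundan--Stroppel weight diagrams that the paper is implicitly using, and it makes both the nesting trichotomy of part $1$ and the existence statement of part $2$ into elementary facts about excursions of a path with steps in $\{-1,0,1\}$. All the details check out: the observation that $f(a)=\times$ and $f(b)=\circ$ are forced by the inequalities on $h$, the identification of $b$ as the first return of $h$ to the level $h(a-1)$, the two discrete intermediate-value arguments in part $2$, and the degenerate cases. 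The only (harmless) point worth flagging is that the trichotomy \eqref{cond0} fails trivially when the two transpositions coincide, so part $1$ must be read, as you do, for distinct admissible transpositions.
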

 
\begin{corollary} Admissible transpositions pairwise commute.
\end{corollary}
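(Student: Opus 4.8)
The plan is to deduce this directly from part 1) of Lemma \ref{prop}, using only the elementary fact that two transpositions of $\Bbb Z$ commute precisely when their two-element supports are disjoint or coincide. So I would take two admissible transpositions $\pi_a^b$ and $\pi_c^d$ for $f$ and aim to show $\pi_a^b\pi_c^d=\pi_c^d\pi_a^b$ as bijections of $\Bbb Z$; by the remark after Definition \ref{defadm} it suffices to treat them as genuine transpositions $(a\,b)$ and $(c\,d)$.

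By Lemma \ref{prop} 1) one of the three alternatives in \eqref{cond0} holds, and I would check each in turn. In the first case $[a,b]\cap[c,d]=\emptyset$, so a fortiori $\{a,b\}\cap\{c,d\}=\emptyset$ and the transpositions act on disjoint pairs of points. In the second case $[a,b]\subset(c,d)$; since $(c,d)$ denotes the \emph{open} interval, neither $c$ nor $d$ belongs to it, so from $a,b\in(c,d)$ we get $a,b\notin\{c,d\}$, i.e.\ again $\{a,b\}\cap\{c,d\}=\emptyset$. The third case $[c,d]\subset(a,b)$ is the same with the two transpositions interchanged.

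Thus in every case the supports $\{a,b\}$ and $\{c,d\}$ are disjoint — they can coincide only if $\pi_a^b=\pi_c^d$, since $f(a)=f(c)=\times$ and $f(b)=f(d)=\circ$ force $a=c$, $b=d$ — and hence $\pi_a^b$ and $\pi_c^d$ commute. There is essentially no obstacle: the whole combinatorial content already sits in Lemma \ref{prop} 1), and the corollary is just the observation that each nesting/disjointness alternative there is enough to separate the supports of the transpositions.
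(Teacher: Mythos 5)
Your proof is correct and is exactly the argument the paper intends when it says the corollary ``easily follows from Lemma \ref{prop}'': each alternative of the trichotomy in \eqref{cond0} separates the supports $\{a,b\}$ and $\{c,d\}$ (the nesting cases because $(c,d)$ is the open interval), so distinct admissible transpositions are disjoint and hence commute. No issues.
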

\begin{proof} It  easily follows from Lemma \ref{prop}.
 \end{proof}
Now let us define for a diagram $f$ and  any  $C\subset f^{-1}(\times)$  the permutation of\, $\Bbb Z$\, by the formula 
\begin{equation}\label{prod}
\pi_C=\prod_{c\in C}\pi_c
\end{equation}
We should mention that the above product is well defined since admissible transpositions commute with each other. 
\begin{definition} Let $P(f)$ be the projective cover of irreducible module $L(f)$. We will denote  by $\mathcal P(f)$ the set of $g$ such that $K(g)$ is a subquotient  of $P(f)$.
\end{definition}
Now we can formulate the main result of Brundan \cite{Brun}. 
\begin{thm}\label{brundan}   $P(f)$ has a multiplicity free Kac flag and  
$$
\mathcal P(f)=\{g\mid g=\pi_C(f), C\subset f^{-1}(\times)\}
$$
\end{thm}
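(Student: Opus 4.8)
The plan is to deduce the theorem from the general machinery of highest weight categories together with one hard input, the decomposition numbers of Kac modules. Restricted to a single block, $\mathcal F$ is a highest weight category whose standard objects are the Kac modules $K(g)=U(\mathfrak g)\otimes_{U(\mathfrak g_0\oplus\mathfrak g_1)}L_0(g)$, where $L_0(g)$ is the finite-dimensional irreducible $\mathfrak g_0$-module of the appropriate highest weight with $\mathfrak g_1$ acting by zero; finite dimensionality of $K(g)$ and the PBW theorem make this routine. First I would record the two formal facts we need: (i) every $P(f)$ has a Kac flag, because it is a direct summand of the projective module $U(\mathfrak g)\otimes_{U(\mathfrak g_0)}L_0(f)$ (projective since $\mathrm{Res}$ to the reductive $\mathfrak g_0$ is exact), and by transitivity of induction that module is filtered by Kac modules; and (ii) BGG reciprocity, which here reads $(P(f):K(g))=[\bar K(g):L(f)]=[K(g):L(f)]$, the last equality coming from the contravariant duality on $\mathcal F$ that fixes all simples and interchanges lower and upper Kac modules $K$ and $\bar K$. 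Hence $\mathcal P(f)=\{g:[K(g):L(f)]\neq 0\}$, and the entire theorem --- including the ``multiplicity free'' clause --- becomes equivalent to the formula
$$
[K(g):L(f)]=\begin{cases}1,& g=\pi_C(f)\ \text{for some }C\subseteq f^{-1}(\times),\\ 0,& \text{otherwise.}\end{cases}
$$

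I would then establish this decomposition formula. The quickest route is to match our combinatorics to Brundan's: an admissible transposition $\pi_a^b$ for a diagram $f$ is exactly a ``cap'' joining the $\times$ at $a$ to the $\circ$ at $b$, the balance conditions of Definition \ref{defadm} being precisely his cap-matching rule, while Lemma \ref{prop} says that two distinct caps are either disjoint or nested --- which is what makes $\pi_C$ in \eqref{prod} well defined and, more importantly, injective as a function of $C$. Under this dictionary the displayed formula is Brundan's character/decomposition result for $\mathfrak{gl}(m,n)$ from \cite{Brun}, whose proof identifies the block with a weight space of the $U_q(\mathfrak{gl}_\infty)$-module $\bigwedge{}^{m}V\otimes\bigwedge{}^{n}V^{*}$: $[K(g)]$ goes to a standard monomial, $[P(f)]$ to a canonical basis vector, and the transition matrix is given by Kazhdan--Lusztig polynomials which, in this polynomial (finite-dimensional) regime, are monomials --- forcing the values $0$ and $1$ and pinning the nonzero ones to the cap diagrams. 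A more self-contained alternative is induction on the degree of atypicality $k=|A\cap B|$ (the number of $\times$'s in $f$): for $k=0$ the Kac module $K(f)=L(f)$ is simple and $f^{-1}(\times)=\emptyset$, so both sides agree; the inductive step uses a wall-crossing translation functor realising the move of one $\circ$ past one $\times$, gluing a two-step Kac flag, and the non-crossing/nesting from Lemma \ref{prop} is exactly what guarantees that iterating these moves over all admissible $C$ produces the family $\{K(\pi_C(f))\}_C$ each exactly once.

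The main obstacle is the decomposition formula itself: proving that all Kac-module multiplicities for $\mathfrak{gl}(m,n)$ are $0$ or $1$ and are governed by cap-matching is, in essence, the Kazhdan--Lusztig theory of this Lie superalgebra, and I would not try to reprove it --- I would cite \cite{Brun} and instead spend the effort on the combinatorial translation: checking that Definition \ref{defadm} reproduces Brundan's caps, that the product \eqref{prod} is unambiguous and injective in $C$ by Lemma \ref{prop}, and that the shifted $\rho$ used here (which makes the entries of $A$ and $B$ integral) is compatible with his normalisation. Granting that, the existence of the Kac flag and BGG reciprocity are standard, and the theorem follows.
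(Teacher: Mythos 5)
Your proposal is correct and is in substance what the paper does: the paper offers no proof of this theorem at all, presenting it as a direct citation of Brundan's main result in \cite{Brun}. Your added scaffolding --- the Kac flag of $P(f)$ via induction from $\mathfrak g_0$, BGG reciprocity reducing everything to the decomposition numbers $[K(g):L(f)]$, and the dictionary between admissible transpositions and Brundan's cap combinatorics --- is sound and standard, but the hard content is still the cited Kazhdan--Lusztig-type theorem, exactly as in the paper.
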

In order to  get an algorithm   for the decomposition  Kac modules into the sum of irreducible modules we need the following combinatorial Lemma.
\begin{lemma}\label{adm}  Let $f,g$ be such diagrams that
$$
g=\tau_r\circ\tau_2\circ\dots\circ\tau_1(f)
$$
where $\tau_i=\pi_{a_i}^{b_i} $ is  a transposition and $f_i=\tau_{i-1}\circ\dots\tau_1(f),\,i=1,\dots, r$. Suppose  also that for  any pair of $i>j$  we have
\begin{equation}\label{cond}
[a_i,b_i]\cap[a_j,b_j]=\emptyset, \, \text{or}\,\,\, [a_i,b_i]\subset (a_j,b_j)
\end{equation}
Then  for any $i=1,\dots r$ the transposition  $\tau_i$ is admissible for $f$ if and only if $\tau_i$ is admissible for $f_i,i=1,\dots,r$.
\end{lemma}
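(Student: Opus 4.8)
The plan is to localise the notion of admissibility. The first step is to observe, by inspecting Definition \ref{defadm}, that whether a transposition $\pi_a^b$ (with the standing convention $a<b$) is admissible for a diagram $h$ is a condition depending on $h$ only through its restriction to the interval $[a,b]$: it requires that $h(a)=\times$, that $h(b)=\circ$, that $\sum_{i\in[a,b]}(\varphi\circ h)(i)=0$, and that $\sum_{i\in[a,c]}(\varphi\circ h)(i)>0$ for every $c\in[a,b)$, and each of these reads only the values of $h$ on $[a,b]$. Granting this, it suffices to prove that, for each fixed $i$, the diagrams $f$ and $f_i$ agree on $[a_i,b_i]$.

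For the second step I would record how a single transposition affects a diagram. As a permutation of $\Bbb Z$, the transposition $\tau_j=\pi_{a_j}^{b_j}$ fixes every point other than $a_j$ and $b_j$; accordingly, replacing a pair $(A,B)$ by $(\tau_j(A),\tau_j(B))$ alters the associated diagram only at the positions $a_j$ and $b_j$. Hence $f_{j+1}=\tau_j(f_j)$ agrees with $f_j$ on $\Bbb Z\setminus\{a_j,b_j\}$ for every $j$.

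The third step combines these with the hypothesis. Fix $i$ and let $j<i$. By \eqref{cond} we are in one of the cases $[a_i,b_i]\cap[a_j,b_j]=\emptyset$ or $[a_i,b_i]\subset(a_j,b_j)$; in the first case one has $a_j,b_j\notin[a_i,b_i]$ at once, and in the second the same holds since the open interval $(a_j,b_j)$ — hence its subset $[a_i,b_i]$ — omits the endpoints $a_j,b_j$. Together with the second step this means every transition in the chain $f=f_1\mapsto f_2\mapsto\cdots\mapsto f_i$ leaves all values on $[a_i,b_i]$ unchanged, so $f|_{[a_i,b_i]}=f_i|_{[a_i,b_i]}$, and the first step then gives that $\tau_i$ is admissible for $f$ exactly when it is admissible for $f_i$.

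I expect the only place demanding genuine care to be the case analysis of the third step: it is crucial that \eqref{cond} expresses the nested alternative with the \emph{open} interval $(a_j,b_j)$, as this is precisely what keeps the two points $a_j,b_j$ moved by $\tau_j$ out of $[a_i,b_i]$; were it the closed interval, the conclusion could fail. Everything else is routine bookkeeping.
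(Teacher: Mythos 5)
Your argument is correct and is essentially the paper's own: both proofs reduce admissibility to a local condition on $[a_i,b_i]$ and then show $f$ and $f_i$ agree there because condition \eqref{cond} keeps the points $a_j,b_j$ (the only ones moved by $\tau_j$, $j<i$) outside $[a_i,b_i]$. The paper merely phrases the second step via the explicit identity $\varphi\circ f_i=\varphi\circ f+2\sum_{j<i}(\delta_{b_j}-\delta_{a_j})$ together with the description of $f_i^{-1}(\times)$, which is the same bookkeeping you do directly.
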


\begin{proof}   Let us prove first  that  functions $f_i$ and $f$ coincide on the segment $[a_i,b_i]$ for $1\le i\le r$. The following equalities are easy  to check
$$
\varphi\circ f_{i}=\varphi\circ f+2\sum_{j=1}^{i-1}(\delta_{b_j}-\delta_{a_j}),\,\,\, i=1,\dots, r
$$
$$
 f_i^{-1}(\times)=\left(f^{-1}(\times)\setminus\{a_1,\dots,a_{i-1}\}\right)\cup\{b_1,\dots,b_{i-1}\}
$$
If $t\in [a_i,b_i]$  then from the conditions of the Lemma  it follows that $\delta_{a_j}(t)=\delta_{b_j}(t)=0$ for any $1\le j<i$. Therefore $\varphi\circ f_i(t)=\varphi\circ f(t)$ on the segment $[a_i,b_i]$. Now Lemma follows from Definition \ref{defadm}.
\end{proof}

\begin{corollary}\label{adm1}  We  will keep the notations from Lemma \ref {adm}. Suppose that 
$
\tau_1=\pi_{a_1}^{b_1},\dots,\tau_r=\pi_{a_r}^{b_r}
$ 
is a set of  transpositions  such that $a_1<a_2<\dots<a_r$ and $a_i\ne b_j,\, 1\le i,j\le r$.
Suppose   also that  for any $i=1,\dots r$  transposition  $\tau_i$ is admissible for $f_i$. Then all transpositions $\tau_1,\dots,\tau_r$ are admissible for $f$.  
\end{corollary}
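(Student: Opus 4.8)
\medskip
\noindent\emph{Proof strategy.}
The plan is to induct on $r$; for $r=1$ the claim is exactly the hypothesis, since $f_1=f$. Assume the statement for $r-1$. The transpositions $\tau_1,\dots,\tau_{r-1}$, with the same diagrams $f_1,\dots,f_{r-1}$, satisfy all hypotheses, so by the inductive hypothesis they are admissible for $f$; then Lemma~\ref{prop}~(1) together with $a_1<\dots<a_{r-1}$ forces, for $m<l\le r-1$, either $[a_l,b_l]\cap[a_m,b_m]=\emptyset$ or $[a_l,b_l]\subset(a_m,b_m)$. So everything reduces to proving that $\tau_r=\pi_{a_r}^{b_r}$ is admissible for $f$.

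Each $\pi_{a_l}^{b_l}$ with $l<r$ changes the diagram only at the positions $a_l,b_l$, and $a_r$ differs from all of these, so $f_r(a_r)=f(a_r)$; as $\pi_{a_r}^{b_r}$ is admissible for $f_r$ this gives $f(a_r)=\times$. Likewise $f_r$ agrees with $f$ on $[a_r,+\infty)$ except possibly at the points $b_l$ ($l<r$) with $b_l>a_r$, the $a_l$'s all lying strictly to the left of $a_r$. If there is no such $b_l$ (in particular if every $b_l<a_r$), then $f_r=f$ on $[a_r,+\infty)$; since whether $\pi_{a_r}^{c}$ is admissible for a diagram depends only on its restriction to $[a_r,c]\subset[a_r,+\infty)$, the transposition $\tau_r$ is admissible for $f$, and moreover $b_j<a_r$ for each $j<r$, so $[a_j,b_j]\cap[a_r,b_r]=\emptyset$. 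Otherwise choose $j<r$ with $a_r<b_j$; since $a_j<a_r$ and $a_r\ne b_j$ this means $a_r\in(a_j,b_j)$.

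The crux is a ``$\times$-analogue'' of Lemma~\ref{prop}~(2): since $\pi_{a_j}^{b_j}$ is admissible for $f$ and $a_r\in(a_j,b_j)$ with $f(a_r)=\times$, there is $\beta\in(a_r,b_j)$ with $\pi_{a_r}^{\beta}$ admissible for $f$. I would obtain this from the partial sums $T(c)=\sum_{i\in[a_j,c]}(\varphi\circ f)(i)$: admissibility of $\pi_{a_j}^{b_j}$ gives $T>0$ on $[a_j,b_j)$ and $T(b_j)=0$, hence $T(a_r-1)\ge1$ and $T(a_r)=T(a_r-1)+1\ge2$; taking $\beta$ to be the least $c>a_r$ with $T(c)\le T(a_r)-1$ one checks $T(\beta)=T(a_r)-1$ (so $\beta<b_j$) and that $\sum_{i\in[a_r,c]}(\varphi\circ f)(i)=T(c)-T(a_r)+1$ is positive on $[a_r,\beta)$ and zero at $\beta$, which is admissibility of $\pi_{a_r}^{\beta}$ for $f$ (the value $f(\beta)=\circ$ being automatic). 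Granting this, no $b_l$ with $l<r$ can lie in $[a_r,\beta]$: the segments $[a_l,b_l]$ and $[a_r,\beta]$ both come from transpositions admissible for $f$, hence by Lemma~\ref{prop}~(1) are disjoint or nested, and nesting would force $a_l>a_r$ or $b_l>\beta$, whereas $a_l<a_r$ and $b_l\le\beta$. Therefore $f_r$ and $f$ agree on $[a_r,\beta]$, so $\pi_{a_r}^{\beta}$ is admissible for $f_r$ as well; since $\pi_{a_r}^{b_r}$ is admissible for $f_r$ and the second index of an admissible transposition at $a_r$ is uniquely determined by the diagram (the remark after Definition~\ref{defadm}), we get $b_r=\beta$, so $\tau_r$ is admissible for $f$. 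This also yields $[a_r,b_r]\subset(a_j,b_j)$ for every such $j$, so condition \eqref{cond} holds for all pairs and one may alternatively conclude by quoting Lemma~\ref{adm}. The only non-formal ingredient is the partial-sum estimate producing $\beta$ — the $\times$-analogue of Lemma~\ref{prop}~(2) — which I expect to be the point requiring care; the rest is bookkeeping with the nested/disjoint structure of admissible segments.
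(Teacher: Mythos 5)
Your proof is correct and follows essentially the same route as the paper's: induct on $r$, observe that $a_r$ is a cross of $f$ because it avoids all the $a_l$ and $b_l$ with $l<r$, produce the admissible transposition of $f$ starting at $a_r$, transfer its admissibility to $f_r$ using the disjoint-or-nested structure of admissible segments, and conclude $b_r=\beta$ by uniqueness of the right endpoint. The only place you go beyond the paper is the explicit partial-sum construction of $\beta$ (your $\times$-analogue of Lemma~\ref{prop}(2)): the paper simply writes ``let $\pi_{a_r}^c$ be the corresponding admissible transposition for $f$'', taking existence for granted, and then cites Lemma~\ref{adm} for the transfer step that you verify by hand.
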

\begin{proof} Let us prove by induction that conditions of Lemma \ref{adm} are fulfilled. We will use induction on $r$. If $r=1$ then the statement of the Lemma is trivial. Let $r>1$. By inductive assumption 
transpositions $\tau_1,\dots,\tau_{r-1}$ are admissible for $f$. Therefore 
$$
f_{r}^{-1}(\times)=\left(f^{-1}(\times)\setminus\{a_1,\dots,a_{r-1}\}\right)\cup\{b_1,\dots,b_{r-1}\}
$$
Since $\tau_r$ is  admissible for $f_r$  we have $a_r\in f_{r}^{-1}(\times)$. By assumptions  of the Lemma  $a_r\ne b_1,\dots,b_{r-1}$ therefore $a_r\in f^{-1}(\times)$. Let $\pi_{a_r}^c$ be the corresponding admissible transposition  for $f$. Then for any $i\le r$  one of the following  conditions holds true
  $$
  [a_i,b_i]\cap [a_r,c]=\emptyset,\,\,[a_r,c]\subset (a_i,b_i),\,\, [a_i,b_i]\subset (a_r,c)
  $$
 The last condition is impossible since $a_r>a_i$. Therefore by Lemma \ref{adm} transposition $\pi_{a_r}^c$ is admissible for $f_r$. Therefore $\pi_{a_r}^{b_r}=\pi_{a_r}^c$ is admissible for $f$.
\end{proof}
 
\begin{corollary}\label{admis} Suppose that 
$
\tau_1=\pi_{a_1}^{b_1},\,\dots,\tau_r=\pi_{a_r}^{b_r}
$
is  a set of transpositions such that $ b_1>b_2>\dots>b_r$ and $a_i\ne b_j,\, 1\le i,j\le r$. Suppose also that      $\tau_i$ is admissible for  $f_i,\,i=1,\dots,n$. Then $\tau_i,\,i=1,\dots, r$ is admissible for $f$. 
\end{corollary}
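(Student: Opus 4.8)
The plan is to argue by induction on $r$, in parallel with the proof of Corollary~\ref{adm1}, but interchanging the roles of the left endpoints $a_i$ and the right endpoints $b_i$. For $r=1$ there is nothing to prove, since $f_1=f$. Let $r>1$. The shorter chain $\tau_1,\dots,\tau_{r-1}$ satisfies the same hypotheses (still $b_1>\dots>b_{r-1}$, still $a_i\ne b_j$, and $\tau_i$ is still admissible for $f_i$), so by the inductive hypothesis $\tau_1,\dots,\tau_{r-1}$ are all admissible for $f$. For $i>j$ in $\{1,\dots,r-1\}$ Lemma~\ref{prop} then forces $[a_i,b_i]\cap[a_j,b_j]=\emptyset$, or $[a_i,b_i]\subset(a_j,b_j)$, or $[a_j,b_j]\subset(a_i,b_i)$; the last case is impossible because $b_i<b_j$, so the pairs among $\tau_1,\dots,\tau_{r-1}$ already satisfy condition~\eqref{cond} of Lemma~\ref{adm}. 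It remains to prove that $\tau_r$ is admissible for $f$.

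First I would pin down the hole $b_r$: since $\tau_r$ is admissible for $f_r$ we have $b_r\in f_r^{-1}(\circ)=\bigl(f^{-1}(\circ)\setminus\{b_1,\dots,b_{r-1}\}\bigr)\cup\{a_1,\dots,a_{r-1}\}$, and as $b_r\ne a_j$ for every $j$ this yields $b_r\in f^{-1}(\circ)$. Next I would produce an admissible transposition $\pi_c^{b_r}$ for $f$ \emph{ending} at $b_r$. This is the delicate point, and it is here that the hypothesis $b_1>\dots>b_r$ is really used: an admissible transposition starting at a prescribed cross always exists (the partial sums of $\varphi\circ f$ run down to $-\infty$ as one moves to the right), but an admissible transposition ending at a prescribed hole need not exist in general. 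It does here because, using $\varphi\circ f_r=\varphi\circ f+2\sum_{k=1}^{r-1}(\delta_{b_k}-\delta_{a_k})$ together with $b_k>b_r$ for $k<r$, one computes
$$
\sum_{i\in[a_r,b_r]}(\varphi\circ f)(i)\;=\;2\,\#\{k<r\,:\,a_k\in[a_r,b_r]\}\;\ge\;0 ,
$$
the sum for $f_r$ being zero. Since $f(x)=\circ$ for all but finitely many $x$, the partial sums $\sum_{i\in[c',b_r]}(\varphi\circ f)(i)$ are equal to $-1$ at $c'=b_r$, are $\ge0$ at $c'=a_r$, and tend to $-\infty$ as $c'\to-\infty$; as their increments lie in $\{-1,0,1\}$, letting $c$ be the largest $c'<b_r$ at which this partial sum vanishes produces a transposition $\pi_c^{b_r}$ admissible for $f$ in the sense of Definition~\ref{defadm}. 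By the symmetric computation, the left endpoint of an admissible transposition is uniquely determined by the diagram and the right endpoint, just as the right endpoint is determined by the diagram and the left one.

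Finally I would feed the list $\tau_1,\dots,\tau_{r-1},\pi_c^{b_r}$ into Lemma~\ref{adm}. Condition~\eqref{cond} holds for it: for pairs among $\tau_1,\dots,\tau_{r-1}$ this was checked above, and for a pair $\pi_c^{b_r},\tau_j$ with $j<r$ both transpositions are admissible for $f$, so Lemma~\ref{prop} gives that their intervals are nested or disjoint, while the inclusion $[a_j,b_j]\subset(c,b_r)$ is excluded by $b_j>b_r$. Hence Lemma~\ref{adm} applies, and since $\pi_c^{b_r}$ is admissible for $f=f_1$ it is admissible for $\tau_{r-1}\circ\dots\circ\tau_1(f)=f_r$. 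But $\tau_r=\pi_{a_r}^{b_r}$ is admissible for $f_r$ too, so the uniqueness noted above gives $c=a_r$; therefore $\tau_r=\pi_c^{b_r}$ is admissible for $f$, which completes the induction. The one genuine obstacle is the second paragraph --- showing that $f$ admits an admissible transposition ending at $b_r$ (the inequality above, which is exactly the place where $b_r$ being the smallest of the $b_k$ matters) and that such a transposition is determined by its right endpoint; with this ``dual'' existence and uniqueness in hand, the remainder is the mirror image of the proof of Corollary~\ref{adm1}.
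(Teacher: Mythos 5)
Your argument is correct and follows the paper's proof essentially step for step: induction on $r$, verification of condition~(\ref{cond}) via Lemma~\ref{prop}, transfer of admissibility between $f$ and $f_r$ by Lemma~\ref{adm}, and the uniqueness of an admissible transposition with a given endpoint. The only local divergence is that you manufacture the admissible transposition $\pi_c^{b_r}$ for $f$ by a direct partial-sum computation of $\varphi\circ f$ on $[a_r,b_r]$ (using $b_k>b_r$), whereas the paper first notes $a_r\in f^{-1}(\times)$, takes the admissible $\pi_{a_r}^c$, and extracts a transposition ending at $b_r$ from Lemma~\ref{prop}(2); both routes are valid and of comparable length.
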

\begin{proof} Let us prove by induction that conditions  (\ref{cond})  are fulfilled. We will use induction on $r$. If $r=1$ then the statement of the Lemma is trivial. Let $r>1$. By inductive assumption  the  conditions (\ref{cond}) are fulfilled therefore by Lemma \ref{adm}
transpositions $\tau_1,\dots,\tau_{r-1}$ are admissible for $f$. Therefore 
$$
f_{r}^{-1}(\times)=\left(f^{-1}(\times)\setminus\{a_1,\dots,a_{r-1}\}\right)\cup\{b_1,\dots,b_{r-1}\}
$$
Since $\tau_r$ is  admissible for $f_r$  we have $a_r\in f_{r}^{-1}(\times)$. By   our assumptions  $a_r\ne b_1,\dots,b_{r-1}$ therefore $a_r\in f^{-1}(\times)$. Besides since $b_r\ne a_1,\dots,a_{r-1}$ we have $b_r\in f^{-1}(\circ)$. Let $\pi_{a_r}^c$ be the corresponding admissible transposition  for $f$. Suppose that $b_r<c$, then $b_r\in [a_r,c]$. Therefore by Lemma \ref{prop} there exist  an admissible for $f$ transposition  $\pi_a^{b_r}$. Then for any $i< r$  one of the following  conditions holds true
  $$
  [a_i,b_i]\cap [a,b_r]=\emptyset,\,\,[a,b_r]\subset (a_i,b_i),\,\, [a_i,b_i]\subset (a, b_r)
  $$
 The last condition is impossible since $b_r<b_i$. Therefore by Lemma \ref{adm} transposition $\pi_{a}^{b_r}$ is admissible for $f_r$. Therefore $\pi_{a_r}^{b_r}=\pi_{a}^{b_r}$ is admissible for $f$. If $b_r\ge c$ then again condition $[a_i,b_i]\subset (a_r, c)$
 is impossible and $\pi_{a_r}^{b_r}=\pi_a^c$ is admissible for $f$.
\end{proof}
\begin{corollary}\label{irr}  Irreducible module  $L(f)$ is a subquotient  of Kac module  $K(g)$ if and only if there exist a sequence of  transpositions 
$$\sigma_1=\pi_{c_1}^{,d_1},\,\dots,\sigma_r=\pi_{c_r}^{d_r}
$$ where $ c_i<d_i,\,i=1,\dots,r$ such that

$1)$  $\sigma_i$ is admissible for $\sigma_i\circ\dots\circ\sigma_1(g)$, $i=1,\dots, r$ and $\sigma_r\circ\dots\circ\sigma_1(g)=f$

$2)$  $c_1>c_2>\dots>c_r$ 

$3)$ $c_i\ne d_j,\, 1\le i,j\le r $

\end{corollary}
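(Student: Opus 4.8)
The plan is to reduce the statement to Theorem~\ref{brundan} by BGG reciprocity and then to translate the condition $g\in\mathcal P(f)$ into the combinatorics of admissible transpositions, using Lemma~\ref{prop} and Corollary~\ref{adm1}. To begin, I would recall that (the integral blocks of) the category $\mathcal F$ form a highest weight category in which the Kac modules are the standard objects, so that BGG reciprocity holds: the multiplicity of $K(g)$ in a Kac flag of $P(f)$ equals $[K(g):L(f)]$ (see \cite{Brun, GS3}). Together with the fact, part of Theorem~\ref{brundan}, that the Kac flag of $P(f)$ is multiplicity free, this shows that $L(f)$ is a subquotient of $K(g)$ if and only if $K(g)$ occurs in that flag, i.e. if and only if $g\in\mathcal P(f)$, i.e. if and only if $g=\pi_C(f)$ for some $C\subset f^{-1}(\times)$. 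Hence the task becomes the purely combinatorial equivalence: $g=\pi_C(f)$ for some $C\subset f^{-1}(\times)$ if and only if a sequence $\sigma_1,\dots,\sigma_r$ satisfying $1)$--$3)$ exists.

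For the ``only if'' part I would write $C=\{c_1>c_2>\dots>c_r\}$, let $d_i$ be the target of the admissible (for $f$) transposition $\pi_{c_i}$, and set $\sigma_i:=\pi_{c_i}^{d_i}$. Since admissible transpositions commute and are involutions, putting $h_i:=\sigma_i\circ\dots\circ\sigma_1(g)$ one obtains $h_i=\pi_{c_{i+1}}^{d_{i+1}}\circ\dots\circ\pi_{c_r}^{d_r}(f)$, so in particular $h_r=f$, while $2)$ and $3)$ are clear because the $c_i$ lie in $f^{-1}(\times)$ and the $d_i$ in $f^{-1}(\circ)$. The point to verify is that $\sigma_i$ is admissible for $h_i$: the diagram $h_i$ agrees with $f$ outside the points $c_j,d_j$ with $j>i$, and for such $j$ one has $c_j<c_i$, whereas part $1)$ of Lemma~\ref{prop}, applied to the two admissible transpositions $\pi_{c_i}^{d_i}$ and $\pi_{c_j}^{d_j}$ of $f$, rules out $d_j\in[c_i,d_i]$; thus $h_i$ and $f$ coincide on $[c_i,d_i]$, and since admissibility of $\pi_{c_i}^{d_i}$ depends only on the restriction of the diagram to $[c_i,d_i]$, $\sigma_i$ is admissible for $h_i$.

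For the ``if'' part I would reverse the sequence: with $h_k:=\sigma_k\circ\dots\circ\sigma_1(g)$ (so $h_0=g$, $h_r=f$), set $\tau_i:=\sigma_{r-i+1}=\pi_{a_i}^{b_i}$ where $a_i=c_{r-i+1}$, $b_i=d_{r-i+1}$, and $g_i:=h_{r-i}$. As the $\sigma_k$ are involutions, $g_i=\tau_i(g_{i-1})$, hence $g_i=\tau_i\circ\dots\circ\tau_1(f)$, with $g_r=g$, and $\tau_i$ is admissible for $g_{i-1}=h_{r-i+1}=\sigma_{r-i+1}\circ\dots\circ\sigma_1(g)$ by $1)$. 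Now $2)$ becomes $a_1<a_2<\dots<a_r$ and $3)$ becomes $a_i\ne b_j$, so Corollary~\ref{adm1} applies and all $\tau_i$ are admissible for $f$. Since admissible transpositions of $f$ commute and $b_i$ is determined by $f$ and $a_i$, this gives $g=\tau_r\circ\dots\circ\tau_1(f)=\pi_C(f)$ with $C=\{a_1,\dots,a_r\}\subset f^{-1}(\times)$, so $g\in\mathcal P(f)$, as needed.

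I expect the main obstacle to be the bookkeeping around the ``direction'' of admissibility: condition $1)$ asks $\sigma_i$ to be admissible for the diagram obtained \emph{after} it is applied, whereas Corollaries~\ref{adm1} and~\ref{admis} are stated for transpositions admissible for the diagram \emph{before} application; it is the reversal of the order of the $\sigma_i$, together with the fact that transpositions are involutions, that bridges this gap. The other point that keeps the argument clean is the locality of admissibility combined with part $1)$ of Lemma~\ref{prop}, which lets one check admissibility of each transposition without having to control the intermediate diagrams globally.
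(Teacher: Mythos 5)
Your proposal is correct and follows essentially the same route as the paper: BGG reciprocity plus Theorem~\ref{brundan} reduce the statement to the combinatorial equivalence $g=\pi_C(f)$, one direction is handled by reversing the sequence and invoking Corollary~\ref{adm1}, and the other by ordering the elements of $C$ decreasingly. Your ``locality'' argument for admissibility of $\sigma_i$ on $h_i$ is just an inlined version of the paper's appeal to Lemma~\ref{adm} after checking conditions~(\ref{cond}), so the two proofs coincide in substance.
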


\begin{proof} Suppose that all  conditions of the Corollary are fulfilled. Then
$$
g=\sigma_1\circ\sigma_{2}\circ\dots\circ\sigma_r(f)
$$
If we set $\tau_i=\sigma_{r+1-i},\,a_i=c_{r-i+1},\,b_i=d_{r-i+1}$ where $i=1,\dots,r$ then it is easy to see that all conditions of  Corollary \ref{adm1} are fulfilled. Therefore  $K(g)$ is a subquotient of $P(f)$. Therefore by $BGG$ reciprocity \cite{Z} $L(f)$ is a sub quotient of $K(g)$. 

Now let us suppose that $L(f)$ is a subquotient of Kac module $K(g)$. Then again by $BGG$ reciprocity $K(g)$ is a sub quotient of $P(f)$. Therefore by Theorem \ref{brundan}  
$
g=\pi_A(f),\, A\subset f^{-1}(\times).
$
Let $A=\{a_1,a_2,\dots, a_r\}$ where $a_1<a_2<\dots<a_r$.  Since admissible transpositions pairwise commute we have 
$$
g=\tau_r\circ\tau_2\circ\dots\circ\tau_1(f)
$$
where $\tau_i=\pi_{a_i}^{b_i}$. Let us check that  conditions (\ref{cond}) are fulfilled. It is enough to verify  that  inclusion $[a_j,b_j]\subset (a_i,b_i)$ is impossible if $i>j$. Indeed if it is so then  $a_j>a_i$ and we get a contradiction. Therefore by Lemma \ref{adm} $\tau_i$ is admissible for $f_i=\tau_{i-1}\circ\circ\dots\circ\tau_1(f)$ and we can set
$$
\sigma_{i}=\tau_{r-i+1},\,c_i=a_{r-i+1},\,d_i=b_{r-i+1}\,\,\,i=1,\dots,r.
$$
\end{proof}

In the same way we can prove the following Corollary.
\begin{corollary}\label{irr1}  Irreducible module  $L(f)$ is a subquotient  of Kac module  $K(g)$ if and only if there exist a sequence of  transpositions 
$$\sigma_1=\pi_{c_1}^{,d_1},\,\dots,\sigma_r=\pi_{c_r}^{d_r}
$$ where $ c_i<d_i,\,i=1,\dots,r$ such that

$1)$  $\sigma_i$ is admissible for $\sigma_i\circ\dots\circ\sigma_1(g)$, $i=1,\dots, r$ and $\sigma_r\circ\dots\circ\sigma_1(g)=f$

$2)$  $d_1<d_2<\dots<d_r$ 

$3)$ $c_i\ne d_j,\, 1\le i,j\le r $

\end{corollary}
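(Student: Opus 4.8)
The plan is to repeat the argument used for Corollary~\ref{irr} almost verbatim, with the single change that one sorts the relevant admissible transpositions by their \emph{right} endpoints rather than their left endpoints, and invokes Corollary~\ref{admis} in place of Corollary~\ref{adm1}.

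For the ``if'' direction, starting from $g=\sigma_1\circ\sigma_2\circ\dots\circ\sigma_r(f)$ I would set $\tau_i=\sigma_{r-i+1}$, $a_i=c_{r-i+1}$, $b_i=d_{r-i+1}$ for $i=1,\dots,r$. A short computation, identical to the one in the proof of Corollary~\ref{irr}, gives $f_i:=\tau_{i-1}\circ\dots\circ\tau_1(f)=\sigma_{r-i+1}\circ\dots\circ\sigma_1(g)$, so hypothesis $1)$ says precisely that $\tau_i$ is admissible for $f_i$; hypothesis $2)$, namely $d_1<d_2<\dots<d_r$, becomes $b_1>b_2>\dots>b_r$; and hypothesis $3)$ becomes $a_i\ne b_j$. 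Hence Corollary~\ref{admis} applies and $\tau_1,\dots,\tau_r$ are all admissible for $f$, so $g=\pi_C(f)$ with $C=\{a_1,\dots,a_r\}\subset f^{-1}(\times)$. Theorem~\ref{brundan} then shows that $K(g)$ is a subquotient of $P(f)$, and $BGG$ reciprocity gives that $L(f)$ is a subquotient of $K(g)$.

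For the ``only if'' direction, $BGG$ reciprocity and Theorem~\ref{brundan} give $g=\pi_A(f)$ for some $A\subset f^{-1}(\times)$. Since admissible transpositions commute, the transpositions $\{\pi_a:a\in A\}$ may be listed in any order; I would order them as $\tau_i=\pi_{a_i}^{b_i}$ with $b_1>b_2>\dots>b_r$ (these right endpoints are distinct: by part $1)$ of Lemma~\ref{prop} the intervals $[a_i,b_i]$ are pairwise nested or disjoint, which forbids a common right endpoint). With this ordering the inclusion $[a_j,b_j]\subset(a_i,b_i)$ for $i>j$ is impossible, as it would force $b_j<b_i$; hence condition~(\ref{cond}) holds and Lemma~\ref{adm} shows each $\tau_i$ is admissible for $f_i$. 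Reindexing by $\sigma_i=\tau_{r-i+1}$, $c_i=a_{r-i+1}$, $d_i=b_{r-i+1}$ then yields a sequence with $d_1<d_2<\dots<d_r$, with $c_i\ne d_j$ (the $a$'s lie in $f^{-1}(\times)$ and the $b$'s in $f^{-1}(\circ)$), with $\sigma_i$ admissible for $\sigma_i\circ\dots\circ\sigma_1(g)=f_{r-i+1}$, and with $\sigma_r\circ\dots\circ\sigma_1(g)=f$, which is exactly what is required. The only point needing any attention beyond Corollary~\ref{irr} is this choice of ordering together with the check that decreasing right endpoints eliminate exactly the forbidden case of~(\ref{cond0}); everything else is a routine transcription of the earlier argument, so I do not expect any genuine obstacle.
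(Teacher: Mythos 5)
Your proposal is correct and is exactly the argument the paper intends: the paper gives no separate proof of Corollary~\ref{irr1}, saying only that it follows ``in the same way'' as Corollary~\ref{irr}, and your adaptation (ordering by decreasing right endpoints and invoking Corollary~\ref{admis} in place of Corollary~\ref{adm1}) is the correct instantiation of that remark, including the needed check that the right endpoints are distinct and that decreasing right endpoints rule out the forbidden nesting in~(\ref{cond}).
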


 The above corollaries  can be used to calculate the irreducible subquotients  of Kac modules.

\begin{definition} $
\mathcal K(g)=\{f\mid Hom_{\frak g}(P(f),K(g))\ne0\}
$
\end{definition}
\begin{example}
  Let $m=n=2$ and $g^{-1}(\times)=\{2,3\}$. We are going to describe the set  $\mathcal K(g)$.
  
As the first step we are going to find a transpositions $\pi_{a}^b$ such that $b\in g^{-1}(\times),$ and $\pi_a^b$ is admissible for $\pi_{a}^b(g).$ And it is easy to see  that  there exists only one such transposition $\pi_{1}^2$.

The next step is to find  a transposition $\pi_a^b$ such that $b\in \pi_1^2(g)^{-1}(\times),\, \pi_a^b$  is admissible for $\pi_a^b\circ \pi_1^2(g)$  and $a<1$. It is easy to check that there exists only one such transposition $\pi_0^3$.
So we have  
$$
\mathcal K(g)=\left\{g,\, \pi_1^2(g),\,\pi_{0}^3\circ\pi_1^2(g)\right\}
$$
\end{example}

\begin{remark} We should mention that our algorithm is  essentially the same as in the paper \cite{MS}.
Legal move of weight zero
 $
 g\xrightarrow{[b,a]} f,\,\, a<b
 $ in the sense of \cite{MS} is the same  as $\sigma= \pi_{a}^b$ is an admissible transposition for $f=\sigma(g)$. And a regular increasing pass  from $g$ to $f$ is the same as the sequence  of transpositions 
 $$
 \sigma_1=\pi_{a_1}^{b_1},\dots,\sigma_r=\pi_{a_r}^{b_r},\quad a_1<b_1,\dots, a_r<b_r
 $$ such that
 
$1)$  $\sigma_i$ is admissible for $\sigma_i\circ\dots\circ\sigma_1(g)$, $i=1,\dots, r$ and $\sigma_r\circ\dots\circ\sigma_1(g)=f$

$2)$  $b_1<b_2<\dots<b_r$ 

$3)$  $a_i\ne b_j,1\le i,j\le r$.
 
\end{remark}

\section{A bilinear form on the ring $P_{m,n}$}

 In this section we are going to define a bilinear form on the ring of Laurent polynomials $P_{m,n}$   and connect this bilinear form with the canonical  bilinear form on the Grothendieck  ring of Lie superalgebra  $\frak g=\frak{gl}(m,n)$. Let $p\rightarrow p^*$ be the followinng  automorphism  of $P_{m,n}$
$$
x_i^*=x_i^{-1},\,i=1,\dots,m,\quad y_j^*=y_j^{-1},\,j=1,\dots,n
$$
\begin{definition} Let us set 
$$
\Delta(x)=\prod_{i>j}\left(1-\frac{x_i}{x_j}\right),\, \Delta(y)=\prod_{i>j}\left(1-\frac{y_i}{y_j}\right),\,\,\Delta(x,y)=\prod_{i,j}\left(1+\frac{y_j}{x_i}\right)
$$
and for $p,q\in P_{m,n}$ let us  define
\begin{equation}\label{form1}
(p,q)=\frac{1}{m!}\frac{1}{n!}\left[ p^*q\frac{\Delta(x)\Delta(x)^*\Delta(y)\Delta(y)^*}{\Delta(x,y)\Delta(x,y)^*}\right]_0
\end{equation}
where $[\,,\,]_0$ means the constant term and  $(\Delta(x,y)\Delta(x,y)^*)^{-1}$ should be understood as
$$
(\Delta(x,y)\Delta(x,y)^*)^{-1}=\frac{(y_1\dots y_n)^m}{(x_1\dots x_m)^n}\left[\prod_{i.j}\left(1+\frac{y_j}{x_i}\right)\right]^{-2}
.$$

\end{definition}

\begin{thm} \label{form} The following equality hold true
$$
\dim Hom_{\frak{g}}(P,L)=(ch P,ch L)
$$
where $P$ is a finite dimensional projective module, $L$ is any finite dimensional module.
\end{thm}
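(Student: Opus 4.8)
The plan is to reduce the asserted equality, by $\Bbb Z$-bilinearity in both arguments, to the case where $P=P(f)$ is an indecomposable projective and $L=K(h)$ is a Kac module, and then to evaluate the two sides explicitly. Since $P(f)$ is projective, the functor $Hom_{\frak g}(P(f),-)$ is exact, so $[L]\mapsto\dim Hom_{\frak g}(P(f),L)$ is a well-defined $\Bbb Z$-linear functional on $K(\mathcal F)$ (the parity shift $\Pi$ does not change this dimension, so the functional descends to the parity-collapsed ring $K(\mathcal F)$; this is the pairing $P(\mathcal F)\times K(\mathcal F)\to\Bbb Z$ of \cite{Brun,GS3}); likewise $[L]\mapsto(ch\,P(f),ch\,L)$ is $\Bbb Z$-linear on $K(\mathcal F)\cong\Lambda^{\pm}_{m,n}$. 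The classes $[K(h)]$ form a $\Bbb Z$-basis of $K(\mathcal F)$, because the transition matrix from $\{[K(h)]\}$ to the basis $\{[L(h)]\}$ of simples is unitriangular with respect to a suitable partial order ($L(h)$ is the head of $K(h)$, and every other composition factor $L(h')$ of $K(h)$ has $h'$ strictly lower). Hence it suffices to prove the identity for $L=K(h)$; summing over an indecomposable decomposition of an arbitrary projective $P$ then gives the general case.

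Evaluating the left-hand side for $L=K(h)$: as $P(f)$ is the projective cover of the simple module $L(f)$ and $End_{\frak g}L(f)=\Bbb C$, one has $\dim Hom_{\frak g}(P(f),K(h))=[K(h):L(f)]$, the composition multiplicity. By $BGG$ reciprocity \cite{Z} this equals the multiplicity of $K(h)$ in a Kac flag of $P(f)$, which by Theorem \ref{brundan} is $1$ when $h\in\mathcal P(f)$ and $0$ otherwise. For the right-hand side, Theorem \ref{brundan} also gives, in $\Lambda^{\pm}_{m,n}$, the multiplicity-free identity $ch\,P(f)=\sum_{g\in\mathcal P(f)}ch\,K(g)$, whence $(ch\,P(f),ch\,K(h))=\sum_{g\in\mathcal P(f)}(ch\,K(g),ch\,K(h))$. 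The whole theorem is thus reduced to the orthonormality of the Kac characters,
$$
(ch\,K(g),ch\,K(h))=\delta_{g,h},
$$
which is the computational heart of the argument.

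To prove orthonormality, recall that the Kac module is $K(\chi)=U(\frak g)\otimes_{U(\frak g_0\oplus\frak g_1)}L_0(\chi)$, so by the PBW theorem $K(\chi)\cong\Lambda(\frak g_{-1})\otimes L_0(\chi)$ as $\frak g_0$-modules and hence
$$
ch\,K(\chi)=ch\,L_0(\chi)\cdot\Delta(x,y),\qquad \Delta(x,y)=\prod_{i,j}\Big(1+\frac{y_j}{x_i}\Big),
$$
with $L_0(\chi)$ the irreducible $\frak g_0=\frak{gl}(m)\oplus\frak{gl}(n)$-module of highest weight $\chi$; moreover $ch\,L_0(\chi)=s_\lambda(x)\,s_\mu(y)$ factors into the irreducible $GL(m)$-character $s_\lambda$ in the variables $x_i$ and the irreducible $GL(n)$-character $s_\mu$ in the $y_j$, where $\chi\leftrightarrow(\lambda\mid\mu)$. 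Substituting into (\ref{form1}), with $g\leftrightarrow(\lambda\mid\mu)$ and $h\leftrightarrow(\lambda'\mid\mu')$, the factor $\Delta(x,y)\Delta(x,y)^*$ coming from $(ch\,K(g))^*\,ch\,K(h)$ cancels literally against the denominator of the kernel — legitimate precisely because the prescribed expansion makes $\Delta(x,y)\Delta(x,y)^*\cdot(\Delta(x,y)\Delta(x,y)^*)^{-1}=1$ — leaving the honest Laurent polynomial $s_\lambda(x)^*s_{\lambda'}(x)\,s_\mu(y)^*s_{\mu'}(y)\,\Delta(x)\Delta(x)^*\Delta(y)\Delta(y)^*$. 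Since this factors as a polynomial in the $x_i$ times one in the $y_j$ and $[F(x)G(y)]_0=[F(x)]_0[G(y)]_0$, and since $\Delta(x)\Delta(x)^*=\prod_{i\ne j}(1-x_i/x_j)$ (likewise for $y$), we get
$$
(ch\,K(g),ch\,K(h))=\frac{1}{m!}\big[s_\lambda(x)^*s_{\lambda'}(x)\,\Delta(x)\Delta(x)^*\big]_0\cdot\frac{1}{n!}\big[s_\mu(y)^*s_{\mu'}(y)\,\Delta(y)\Delta(y)^*\big]_0=\delta_{\lambda,\lambda'}\,\delta_{\mu,\mu'},
$$
each bracket being the constant-term form of Weyl's character-orthogonality formula for $GL(m)$, resp. $GL(n)$ (cf. \cite{Mac}). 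Combined with the preceding paragraph, both sides of the theorem equal $1$ if $h\in\mathcal P(f)$ and $0$ otherwise, which finishes the proof.

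I expect the genuine work to lie not in the (brief) constant-term computation but in assembling the structural inputs about the category $\mathcal F$ — the explicit character of $K(\chi)$, $BGG$ reciprocity in this non-semisimple super setting, and Brundan's multiplicity-free Kac flag of $P(f)$ (Theorem \ref{brundan}) — and in the bookkeeping that lets one pass rigorously between the parity-collapsed Grothendieck ring $K(\mathcal F)$ and $\Lambda^{\pm}_{m,n}$ (well-definedness of $\dim Hom_{\frak g}(P,-)$ on $K(\mathcal F)$, and the fact that the Kac classes form a $\Bbb Z$-basis), all of which may be quoted from the references already cited.
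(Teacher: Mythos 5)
Your core computation — orthonormality of the Kac characters via cancellation of $\Delta(x,y)\Delta(x,y)^*$ against the kernel and Schur orthogonality, followed by BGG reciprocity and Brundan's multiplicity-free Kac flag to settle the case $L=K(h)$ — is exactly the paper's argument and is fine. The gap is in your reduction step: the claim that the classes $[K(h)]$ form a $\Bbb Z$-basis of $K(\mathcal F)$ is false, and unitriangularity of the decomposition matrix does not rescue it. The relevant partial order on dominant weights has \emph{infinite} descending chains inside a block (composition factors of $K(g)$ are the $L(f)$ with $g=\pi_C(f)$, i.e.\ obtained by moving crosses leftward, and one can keep moving them left forever), so inverting the unitriangular matrix expresses $[L(\chi)]$ only as an \emph{infinite} sum of Kac classes, living in a completion rather than in $K(\mathcal F)$ itself. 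Concretely, for $\frak{gl}(1,1)$ every finite $\Bbb Z$-combination of Kac characters is divisible by $1+y/x$, while the trivial module has character $1$; so $[L]$ is genuinely outside the $\Bbb Z$-span of the $[K(h)]$, and agreement of the two linear functionals on Kac classes does not imply agreement on all of $K(\mathcal F)$.

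This is precisely the point the paper devotes the last part of its proof to, giving two separate repairs: (i) Serganova's (possibly infinite) resolution of $L$ by modules with Kac flags, truncated using the fact that $Hom_{\frak g}(P,K_i)$ and $(ch\,P,ch\,K_i)$ both vanish once the weights of $K_i$ leave the (finite) weight support of $P$ resp.\ force all monomials to have negative $x$-degree; or (ii) a completion argument à la Brundan, writing $[L(\chi)]$ as a limit of finite combinations $\sum c_{\tilde\chi}[K(\tilde\chi)]$ modulo the filtration subgroups $K(\mathcal F)_d$, and checking both pairings are continuous for this filtration. To fix your proof you must add one of these limiting arguments (or restrict the theorem's $L$ to modules admitting a Kac flag, which is weaker than what is claimed).
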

\begin{proof} We are going to prove the Theorem in  several steps.
First we are going to prove that  characters of Kac modules are pairwise orthogonal  with respect to the pairing $(\,,\,)$.
Let $K(f),K(g)$ be two Kac modules and $\chi=(\lambda,\mu) $ and $\tilde\chi=(\nu,\tau)$  are  the corresponding highest weights, where $\lambda,\nu$ are highest weights of $\frak{gl}(m)$ and $\mu,\tau$ are highest weights of $\frak{gl}(n)$. Then we have 
$$
ch K(f)=\Delta(x,y)s_{\lambda}(x)s_{\mu}(y),\,\,
 chK(g)=\Delta(x,y)s_{\nu}(x)s_{\tau}(y),\,\,
$$
where $s_{\lambda}, s_{\mu},s_{\nu}, s_{\tau}$ are Schur functions. Therefore we have  
$$
(f,g)=\frac{1}{m!}\frac{1}{n!}\left[s^*_{\lambda}s^*_{\mu}\Delta(x)^*\Delta(y)^*s_{\nu}s_{\tau}\Delta(x)\Delta(y)\right]_0=\delta_{\lambda,\nu}\delta_{\mu,\tau}
$$
according to the  orthogonality  of Schur polynomials.

Now let $P(f)$ be  the projective cover  of the  irreducible module  $L(f)$ and $K(g)$ be a Kac module. Then we are going to prove that
\begin{equation}\label{equa1}
\dim Hom_{\frak g}(P,K)=(ch P, ch K)
\end{equation}
 We  can suppose that $P^+(f)=P(f) ,\,K^+(g)=K(g)$. In other words the parity of every weight vector coincides with the parity of the weight. We have  
$$
\dim Hom_{\frak{g}}(P(f),K(g))=n_{g,f}
$$
 where $n_{g,f}$ is the multiplicity of irreducible module $L(f)$ in the  Jordan - Helder series of the module  $K(g)$.  On the other hand from the orthonormality    of Kac modules  it follows that $(P(f),K(g))=m_{f,g}$, where $m_{f,g}$ is the multiplicity of Kac module  $K(g)$ in the Kac flag of the module $P(f)$.
But by BGG reciprocity  $m_{f,g}=n_{g,f}$  and we proved  equality (\ref{equa1}).

To complete the proof, it just remains to show that the following equality 
$$
\dim Hom_{\frak g} (P(f), L)=(ch\,P(f),ch\, L)
$$
is true for any finite dimensional module $L$.
For this, we give two different arguments, the first based on  a fact proved by Serganova in \cite{Serga1} and the second using instead completion in the spirit of Brundan (\cite{Brun1}  \S, 4c).

Now let  $L$ be a module which has a Kac flag. Then 
$$
\dim Hom_{\frak g} (P(f), L)=\sum_{g}\dim Hom_{\frak g} (P(f),K(g))=
$$
$$
\sum_{g}(ch\, P(f),ch\,K(g))=(ch\,P(f),ch\, L)
$$
where $K(g)$ runs over all subquotients of  $L$

Now let $L$ be any  finite dimensional module and $P$ be a projective module.  By Serganova \cite {Serga1} there exist a resolvent of $L$ 
\begin{equation}\label{res}
\dots \rightarrow K_i\rightarrow K_{i-1}\rightarrow \dots \rightarrow K_1\rightarrow L\rightarrow 0
\end{equation}
where  every $K_i$ has a flag of Kac modules. Therefore  we have  an exact sequence of  vector spaces
$$
\dots \rightarrow Hom_{\frak g}(P,K_i)\rightarrow  \dots \rightarrow Hom_{\frak g}(P, K_1)\rightarrow Hom_{\frak g}(P, L)\rightarrow 0
$$

 For any finite dimensional module $V$   let us denote by $wt(V)$ the set of  the weights of  the module  $V$.  Let $N$ be such that  for any $i> N$ we
have $wt(P)\cap wt(K_i)=\emptyset.$ Then for any $i> N$ we have $Hom_{\frak g}(P,K_i)=0$ 
and 
\begin{equation}\label{e1}
dim(P,L)=dim(P,K_1)-dim(P,K_2)+\dots+(-1)^{i+1}dim(P,K_i)
\end{equation} 
 On the other hand from  equality (\ref{res}) we have 

$$
sch L-sch K_1+sch K_2-\dots+(-1)^isch K_i+\dots=0
$$
The above sum makes sense since every weight entries the sum with finite multiplicity.  

Now let us calculate $(ch P, ch K_i)$. We have by definition
$$
(ch P, ch K_i)=\frac{1}{m!}\frac{1}{n!}\left[(ch P)^*ch K_i\frac{\Delta^*(x)\Delta(y)^*\Delta(x)\Delta(y)}{\Delta(x,y)^*\Delta(x,y)}\right]_0=
$$
$$
=\frac{1}{m!}\frac{1}{n!}\left[(ch P)^*ch K_i\Delta^*(x)\Delta(y)^*\Delta(x)\Delta(y)\sum\prod\left(\frac{y_j}{x_i}\right)^{n_{ij}}\right]_0
$$
Now let us take $M$ such that for any $i>M$  all  monomials of the polynomial  $(ch P)^*ch K_i\Delta^*(x)\Delta(y)^*\Delta(x)\Delta(y)$  were negative degree with respect to $x_1,\dots,x_m$. Therefore all  monomials in the above expansion  have negative degree with respect  to $x_1,\dots,x_m$. Therefore 
$(ch P, ch K_i)=0$. Therefore  for $i>M$  we have 
\begin{equation}\label{e2}
(ch P,ch L)-(ch P,ch K_1)+\dots+(-1)^i(chP, ch K_i)=0
\end{equation}
Therefore  if we take $i>\max\{N,M\}$ then from the equalities (\ref{e1}), (\ref{e2}) we have $\dim Hom_{\frak g}(P,L)=(ch P, ch L)$ and Theorem \ref{form} is proved.

Now let us use a completion. For $\chi=\lambda_1\varepsilon_1+\dots+\lambda_m\varepsilon_m+\mu_1\delta_1+\dots+\mu_n\delta_n$ let us set $m(\chi)=\mu_1+\dots+\mu_n$. Let  $K(\mathcal F)_d$ be  the subgroup of  the Grothendieck group $K(\mathcal F)$  generated by $\{L(\chi)\}$ for $\chi\in P^+$ with $m(\chi)\ge d$. We know that $[K(\chi)]$  is the finite linear combination of $L(\chi)$ and  $[L(\tilde\chi)]$ where $\tilde\chi<\chi$. Therefore we can find the sequence  $\{A_i\}_{i\ge1}$ of finite subsets in $P^+$  such that $A_i\subset A_{i+1}$  and 
$$
[L(\chi)]-\sum_{\tilde\chi\in A_i}c_{\tilde\chi}[K(\tilde\chi)]\in K(\mathcal F)_{d_i}
$$
where  $d_1<d_2<d_3\dots.$ It is easy to see that for given projective module $P$ there exists $N_1$ such that   for any $d\ge N_1$ we have $\dim_{\frak g}(P, L)=0$  for any irreducible module $L\in K_d$. And it follows from  formula (\ref{form1}) that  there exists $N_2$ such than for any $d\ge N_2$ we have  $(ch P, ch L)=0$ for  any irreducible module $L\in K_d$. Therefore for $d_i\ge\max\{N_1,N_2\}$ we have 
$$
\dim Hom (P, L(\chi))=\sum_{\tilde\chi\in A_i}c_{\tilde\chi}\dim Hom(P,K(\tilde\chi))=
$$
$$
=\sum_{\tilde\chi\in A_i}c_{\tilde\chi}(P,K(\tilde\chi))=(P,L(\chi))
$$
and we proved the Theorem in this way.
\end{proof}
\begin{corollary} 
$$
\mathcal P(f)=\{g\mid (ch\,P(f),ch\,K(g))\ne0\}
$$
\end{corollary}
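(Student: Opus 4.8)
The plan is to read the corollary off directly from Theorem~\ref{form} together with the intermediate identities already obtained in the course of its proof. First I would apply Theorem~\ref{form} with $P=P(f)$, the projective cover of $L(f)$, and $L=K(g)$ a Kac module, which gives
$$
(ch\,P(f),ch\,K(g))=\dim Hom_{\frak g}(P(f),K(g)).
$$
Next I would invoke the step of the proof of Theorem~\ref{form} that established equality~(\ref{equa1}): there it was shown that $\dim Hom_{\frak g}(P(f),K(g))=n_{g,f}$, the multiplicity of $L(f)$ in a composition series of $K(g)$, and that by BGG reciprocity $n_{g,f}=m_{f,g}$, the multiplicity of $K(g)$ as a section of a Kac flag of $P(f)$.

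Since $m_{f,g}$ is a non-negative integer, the pairing $(ch\,P(f),ch\,K(g))$ is nonzero precisely when $m_{f,g}>0$, i.e.\ precisely when $K(g)$ occurs as a section of the Kac flag of $P(f)$. By Theorem~\ref{brundan} the Kac flag of $P(f)$ is multiplicity free and its sections are exactly the $K(g)$ with $g=\pi_C(f)$ for $C\subset f^{-1}(\times)$; that same theorem identifies this set with $\mathcal P(f)$. Hence $g\in\mathcal P(f)$ if and only if $m_{f,g}\neq0$ if and only if $(ch\,P(f),ch\,K(g))\neq0$, which is the assertion.

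I do not anticipate a genuine obstacle: the argument is just a concatenation of Theorem~\ref{form}, BGG reciprocity, and Theorem~\ref{brundan}. The only point that deserves a word of care is the equivalence between ``$K(g)$ is a subquotient of $P(f)$'' (the definition of $\mathcal P(f)$) and ``$K(g)$ occurs in a Kac flag of $P(f)$''; this is furnished by Theorem~\ref{brundan}, and can also be seen directly from the linear independence of the characters $ch\,K(g)=\Delta(x,y)s_\lambda(x)s_\mu(y)$ attached to distinct highest weights, which forces the Kac-flag multiplicities to be invariants of the module.
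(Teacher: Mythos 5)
Your argument is correct and is exactly the one the paper intends (the corollary is stated without proof, as an immediate consequence of Theorem~\ref{form} applied to $P(f)$ and $K(g)$, the identity $(ch\,P(f),ch\,K(g))=m_{f,g}$ already established inside that proof via BGG reciprocity, and Theorem~\ref{brundan}). Your closing remark about identifying ``subquotient'' with ``section of the Kac flag'' is a reasonable extra precaution, but it matches the paper's usage and introduces no discrepancy.
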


\section{Kac modules and Euler  characters}

Now we are going to calculate the number $(ch\,K(f),ch\,E(g))$ where $K(f)$ is a Kac module and $E(g)$ is an Euler virtual  module.  General formula for Euler characters was given by V. Serganova in \cite {Serga1}. For any parabolic subalgebra  $\frak{p}\subset\frak{g}$ and any finite dimensional module $M$ of $\frak{p}$ by a super version of Borel - Weil - Bott construction one can define the  virtual Euler module  $ E^{\frak p}(M)$. According to the general formula due to Serganova \cite {Serga1}
$$
ch E^{\frak p}(M)=\sum_{w\in W}w\left(\frac {De^{\rho} ch M}{\prod_{ \alpha\in R_{\frak{p}}\cap R_1^+}(1-e^{\alpha})}\right)
$$
with 
$$
D=\frac{\prod_{\alpha\in R_1^+}(e^{\alpha/2}-e^{-\alpha/2})}{\prod_{\alpha\in R_0^+}(e^{\alpha/2}-e^{-\alpha/2})}
$$
Here $\rho$ is the half-sum of the even positive roots minus the half-sum odd positive roots, $R_{\frak p}$ is the set of roots $\alpha$ such that $\frak g_{\pm\alpha}\subset\frak p$. Consider now  $\frak g=\frak{gl}(m,n)$ and let $(r,s)$ be  a pair of integers  such that $0\le r \le m,\,$ $ 0\le s\le n,\,\,r-s=m-n$. We will denote the set  of such pairs as  $P(m,n)$. Next let us choose  for $(r,s)\in P(m,n)$ the following system of simple roots
$$
\{\varepsilon_i-\varepsilon_{i+1},\delta_j-\delta_{j+1}, \varepsilon_r-\delta_1,\delta_s-\varepsilon_{r+1},
\varepsilon_m-\delta_{s+1}\},\,i\in[1,m]\setminus\{r\},\,j\in[1,n]\setminus\{s\}.
$$
So we have the corresponding set of positive even and odd roots. 
 Consider  now the parabolic subalgebra  $\frak p$ with
$$
R_{\frak p}=\{\varepsilon_i-\varepsilon_j,\,\delta_p-\delta_q,\, \pm (\varepsilon_i-\delta_p)\},
$$
where  $r+1\le i,j\le m,\,i\ne j$ and $s+1\le p,q\le n,\,p\ne q$.

If we set 
$$
\chi_{r,s}=\sum_{i=1}^r\tau_i\varepsilon_i+\sum_{j=1}^s\nu_j\delta_j
$$
where 
$$
\tau=(\tau_1,\dots,\tau_r),\,\,\nu=(\nu_1,\dots,\nu_s)
$$
are non increasing sequences of integers
then $\chi$ defines one dimensional representation of $\frak p$.
For any function $f(x_1,\dots,x_m,y_1,\dots, y_n)$ let us define the following alternation operation
$$
\{f(x,y)\}=\sum_{w\in S_m\times S_n}\varepsilon(w)w(f(x,y)).
$$
Then  it is easy to check that  Euler character is given by the following  formula
 $$
ch\,E(\chi_{r,s})\Delta(x)x^{\rho_m}\Delta(y)y^{\rho_n}
$$
\begin{equation}\label{Eulerch}
=\left\{\prod_{(ij)\in D_{+}}\left(1+\frac{y_j}{x_i}\right)\prod_{(ij)\in D^{-}}\left(1+\frac{x_i}{y_j}\right)x^{\tau}y^{\nu}x^{\rho_m}y^{\rho_n}\right\}
\end{equation}
where 
$$
D_{+}=[1,r]\times[1,n],\quad D_{-}=[ r+1,m]\times[1,s].
$$
\begin{remark} If we apply to the formula (\ref{Eulerch})  the  automorphism  $\omega$ which acts identically on $x_1,\dots,x_n$ and acts  multiplication by $-1$ on $y_1,\dots,y_m$ then we get  the Euler supercharacter (see \cite{Ser1}  Proposition 5.10). And it was proved in \cite{Ser1} that Euler supercharacters  $\omega(E(\chi_{r,s}))$
where $(r,s)\in P(m,n)$ form a basis in the ring of superchsracters. Therefore $ch E(\chi_{r,s})$ where $(r,s)\in P(m,n)$ form a basis in the ring $K(\mathcal F)$.
\end{remark}

As before we  can use diagram $g=(A,B)$ where

$$
 A=\{\tau_1,\tau_2-1,\dots,\tau_r+1-r\},\,
$$
$$
 B=\{s-r-\nu_s,s-r-\nu_{s-1}-1,\dots,-r-\nu_1\}
$$

 As a particular case we have the formula for character  of Kac module $K(\tilde\chi)$ where
   $\tilde\chi=(\lambda,\mu)$ and
$
\lambda=(\lambda_1,\dots,\lambda_m),\,\,\mu=(\mu_1,\dots,\mu_n),
$
are non increasing sequences of integers. 
 In this case  we have 
 $$
 ch K(\tilde\chi)=\Delta(x,y)s_{\lambda}(x)s_{\mu}(y)
 $$
 and the corresponding diagram $f=(\tilde A,\tilde B)$ where

$$
\tilde A=\{\lambda_1,\lambda_2-1,\dots,\lambda_m+1-m\},\,
$$
$$
\,\tilde B=\{n-m-\mu_n,n-m-\mu_{n-1}-1,\dots,-m-\mu_1\}
$$

\begin{definition} Let $X,Y$ be two sets of integers  such that $X\cap Y=\emptyset$. Let $ x_1>x_2>\dots,x_m$ be  the elements of $X$ in decreasing order  and $ y_1>y_2>\dots>y_n$ be  the elements of $Y$ in decreasing  and  $ z_1>x_2>\dots,z_{m+n}$ be the elements of $Z=X\cup Y$ in decreasing order. The sign of a permutation $\sigma$ such that 
$$
\sigma(x_1,\dots,x_m,y_1,\dots,y_n)=(z_1,\dots,z_{n+m})
$$
will be denoted by $\varepsilon(X,Y)$.

\end{definition}
 Let us keep the notation of the above definition. Then the following Lemma can be easily proved.
\begin{lemma}\label{sign} Let us set 
$$
a_i=|X\cap(-\infty, x_i)|,\, \,i=1,\dots,m \quad b_j=[Y\cap(y_j,+\infty)|,j=1,\dots,n.
$$
where $|A|$ means the cardinality of $A$.
Then the following equalities hold true
$$
\varepsilon(X,Y)=(-1)^{a_1+\dots+a_m}=(-1)^{b_1+\dots+b_n}
$$
\end{lemma}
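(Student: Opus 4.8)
The plan is to read $\varepsilon(X,Y)$ off from the inversions of the word obtained by concatenating the two decreasing lists. Recall the elementary fact that, for a finite sequence of pairwise distinct integers, the permutation relating it to its rearrangement in \emph{decreasing} order has sign $(-1)^{e}$, where $e$ counts the pairs of positions $(p,q)$ with $p<q$ at which the sequence strictly increases. I would apply this to the word $w=(x_1,\dots,x_m,y_1,\dots,y_n)$, which $\sigma$ carries to the decreasing word $(z_1,\dots,z_{m+n})$; thus $\varepsilon(X,Y)=(-1)^{e}$ with $e$ the number of increasing pairs of positions in $w$.

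The next step is to identify those increasing pairs. Inside the initial block $x_1>\dots>x_m$ there are none, and inside the final block $y_1>\dots>y_n$ there are none; a pair straddling the two blocks has its $x$-entry in the earlier position, so it is increasing precisely when $x_i<y_j$. Hence
$$
e=\#\{(i,j)\in[1,m]\times[1,n]\mid x_i<y_j\}.
$$
Counting this set first by $i$ gives $e=\sum_{i=1}^{m}|Y\cap(x_i,+\infty)|$, and counting it by $j$ gives $e=\sum_{j=1}^{n}|X\cap(-\infty,y_j)|$; these summands are the numbers $a_i$ and $b_j$ of the statement, so that $\varepsilon(X,Y)=(-1)^{a_1+\dots+a_m}=(-1)^{b_1+\dots+b_n}$. (Here $a_i$ is the number of elements of $Y$ lying above $x_i$ and $b_j$ the number of elements of $X$ lying below $y_j$; this is how the displayed definitions of $a_i,b_j$ must be read, since the literal $|X\cap(-\infty,x_i)|$ equals $m-i$, independent of $X$.)

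Since this is a purely combinatorial bookkeeping statement, I do not anticipate a real obstacle; the only places deserving care are orienting the inversion count toward the decreasing order and matching the two groupings of the crossing set $\{(i,j):x_i<y_j\}$ to the families $(a_i)$ and $(b_j)$. As an alternative that avoids quoting the inversion formula, the identity follows by induction on $m+n$: remove the maximal element $z_1$ of $Z$. If $z_1\in X$ then $\sigma$ fixes its position and $a_1=0$, leaving both sides unchanged; if $z_1\in Y$ then pulling it to the front is an $(m+1)$-cycle, multiplying $\varepsilon$ by $(-1)^m$, while it removes $m=b_1$ from $b_1+\dots+b_n$ and decreases each remaining $a_i$ by $1$, i.e.\ removes $m$ from $a_1+\dots+a_m$. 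In both cases the two sides change by the same sign, and the base case $m+n\le1$ is trivial.
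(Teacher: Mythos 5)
Your proof is correct. The paper itself offers no argument for this lemma (it is stated with ``can be easily proved''), and your inversion-count computation --- sign of the sorting permutation equals $(-1)^{e}$ with $e$ the number of increasing pairs, which here are exactly the cross pairs $x_i<y_j$, counted once by $i$ and once by $j$ --- is precisely the standard argument one would supply; the inductive alternative (peeling off the maximum of $Z$) is also sound. Your corrected reading of the definitions, $a_i=|Y\cap(x_i,+\infty)|$ and $b_j=|X\cap(-\infty,y_j)|$, is the right one: the printed formulas are a typo (as you note, $|X\cap(-\infty,x_i)|=m-i$ would make the exponent independent of $Y$), and your reading is the one forced by the way the lemma is used in Corollary \ref{prev}, where $\varepsilon(A,C)$ must change by $|A\cap(a,b)|$ mod $2$ when an element of $C$ is moved from $a$ to $b$.
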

\begin{definition} Let $h$ be a diagram and $C\subset h^{-1}(\circ)$. Then by $h*C$ we will denote  the following diagram
$$
(h*C)^{-1}(x)=\begin{cases}
h^{-1}(x),\,x=<,>\\
h^{-1}(x)\cup C,\, x=\times\\
h^{-1}(x)\setminus C,\, x=\circ
\end{cases}
$$
\end{definition}
Now we can formulate the main result of this section.
 
\begin{thm} \label{KacEuler} The following statement holds true:
\begin{equation}\label{main1}
(ch\,K(f),ch\, E(g))=\begin{cases}\varepsilon(f,g),\, f=g*C, C\subset g^{-1}({\circ})\cap\Bbb Z_{\le n-m}\\
0,\,\,\text{otherwise}
\end{cases}\end{equation}
where 
$$
\varepsilon(f,g)=(-1)^{\frac12r(r-1)+\frac12m(m-1)+s(m-r)+S(C)}\varepsilon(A,C)\varepsilon(C,B)
$$
 and  $S(C)$ is equal to the sum of the elements of $C$. 
 \end{thm}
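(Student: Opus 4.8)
The plan is to compute the left-hand side of (\ref{main1}) directly from the definition (\ref{form1}) of the pairing, feeding in the explicit characters $ch\,K(f)=\Delta(x,y)s_\lambda(x)s_\mu(y)$ and the Euler formula (\ref{Eulerch}). \emph{Step 1 (substitution and first cancellations).} Let $f=(\tilde A,\tilde B)$ with highest weight $(\lambda,\mu)$. Since $(ch\,K(f))^{*}=\Delta(x,y)^{*}s_\lambda(x^{-1})s_\mu(y^{-1})$, the factor $\Delta(x,y)^{*}$ cancels the corresponding factor in the denominator of (\ref{form1}). Now use the elementary identity $s_\lambda(x)\,\Delta(x)\,x^{\rho_m}=\det(x_i^{\lambda_j+1-j})$ — and its analogues in the $y$-variables and under $x\mapsto x^{-1}$, $y\mapsto y^{-1}$ — to trade the Schur factors of $K(f)$, together with the surviving factors $\Delta(x)^{*}$, $\Delta(y)^{*}$, for a pair of alternants. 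A short manipulation then rewrites the integrand of $(ch\,K(f),ch\,E(g))$ as
$$\frac{\det(x_i^{-(\lambda_j+1-j)})\,\det(y_i^{-(\mu_j+1-j)})}{\Delta(x,y)}\;\{P(x,y)\,x^{\tau}y^{\nu}x^{\rho_m}y^{\rho_n}\},$$
where $P(x,y)=\prod_{(ij)\in D_+}(1+y_j/x_i)\prod_{(ij)\in D_-}(1+x_i/y_j)$ is the product appearing in (\ref{Eulerch}).

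\emph{Step 2 (absorbing the alternation).} The key observation is: if $h\in P_{m,n}$ is $S_m\times S_n$-anti-invariant, i.e. $w(h)=\varepsilon(w)h$ for all $w$, then $\frac{1}{m!\,n!}[h\,\{p\}]_0=[h\,p]_0$ for every $p$, because the constant term is invariant under permutations of the variables and hence $[h\,w(p)]_0=[w^{-1}(h)\,p]_0=\varepsilon(w)[h\,p]_0$. The prefactor of $\{\,\cdot\,\}$ obtained in Step~1 is anti-invariant, so the pairing collapses to a single constant term,
$$(ch\,K(f),ch\,E(g))=\left[\det(x_i^{-(\lambda_j+1-j)})\det(y_i^{-(\mu_j+1-j)})\,x^{\tau}y^{\nu}x^{\rho_m}y^{\rho_n}\,\frac{P(x,y)}{\Delta(x,y)}\right]_0 .$$
Since $D_+=[1,r]\times[1,n]$, the common binomials in $P(x,y)/\Delta(x,y)$ cancel, and using $\frac{1+x_i/y_j}{1+y_j/x_i}=\frac{x_i}{y_j}$ one finds that $P(x,y)/\Delta(x,y)$ equals an explicit monomial times $\prod_{r<i\le m,\ s<j\le n}(1+y_j/x_i)^{-1}$.

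\emph{Step 3 (evaluating the constant term).} Expand $\prod_{r<i\le m,\ s<j\le n}(1+y_j/x_i)^{-1}=\sum_{(k_{ij})}(-1)^{\sum k_{ij}}\prod(y_j/x_i)^{k_{ij}}$ (only finitely many terms contribute to the constant term) and expand the two determinants as signed sums over $S_m$ and $S_n$. The constant term becomes a sum of $\pm1$'s over triples $(\sigma,\omega,(k_{ij}))$ for which every $x$-exponent and every $y$-exponent vanishes. Solving this system: the equations with $i\le r$ and $j\le s$ force the ``$x$-part'' $\{\tau_j+1-j\}_{j\le r}$ of $g$ to lie in $\tilde A$ and the analogous set of $y$-exponents of $g$ to lie in $\tilde B$ — i.e. $g^{-1}(\times)\subseteq f^{-1}(\times)$ — and determine $\sigma,\omega$ on those ranges; the remaining equations fix the row and column sums of $(k_{ij})$ together with the rest of $\sigma,\omega$ in terms of the set $C:=f^{-1}(\times)\setminus g^{-1}(\times)$ of circles of $g$ promoted to crosses, and show that a solution exists precisely when $f=g*C$ with $C\subseteq g^{-1}(\circ)$. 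The restriction $C\subseteq\mathbb Z_{\le n-m}$ appears because the inverted binomials occupy only the block $[r+1,m]\times[s+1,n]$, where the relevant exponents are bounded above by $s-r=n-m$; for each admissible $C$ exactly one triple contributes, so the constant term is $\pm1$. It then remains to assemble the signs: $\varepsilon(\sigma)$ and $\varepsilon(\omega)$ are rewritten as $\varepsilon(A,C)$ and $\varepsilon(C,B)$ by Lemma~\ref{sign} (which interprets the sign of a merging permutation as a cardinality count), the factor $(-1)^{\sum k_{ij}}$ contributes $(-1)^{S(C)}$ up to a constant, and the $\rho$-shifts contribute the constant $(-1)^{\frac12r(r-1)+\frac12m(m-1)+s(m-r)}$; their product is $\varepsilon(f,g)$.

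\emph{The main obstacle} is Step~3: disentangling the exponent equations to see that the support of the pairing is exactly $\{g*C:C\subseteq g^{-1}(\circ)\cap\mathbb Z_{\le n-m}\}$ with a unique contributing term, and carrying the various signs through to the closed form of $\varepsilon(f,g)$. An alternative packaging is to first prove the character identity $ch\,E(g)=\sum_{C}\varepsilon(g*C,g)\,ch\,K(g*C)$, the sum over admissible $C$, and then conclude immediately from the orthonormality $(ch\,K(f),ch\,K(h))=\delta_{f,h}$ of Kac characters established in the proof of Theorem~\ref{form}; but proving that identity is the same computation.
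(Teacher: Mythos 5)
Your Steps 1 and 2 are correct and reproduce, in slightly different clothing, the paper's own starting point: after cancelling $\Delta(x,y)^*$ and reducing $P(x,y)/\Delta(x,y)$ to the monomial $\frac{(x_{r+1}\cdots x_m)^s}{(y_1\cdots y_s)^{m-r}}$ times $\prod_{(i,j)\in D_{r,s}}(1+y_j/x_i)^{-1}$, everything hinges on evaluating the resulting constant term. The genuine gap is in Step 3, precisely where you locate ``the main obstacle'': the assertion that for the admissible $C$ exactly one triple $(\sigma,\omega,(k_{ij}))$ contributes is false. For a fixed pair of permutations the vanishing of the $x_i$-exponents ($i>r$) and $y_j$-exponents ($j>s$) only prescribes the row sums and column sums of the matrix $(k_{ij})$ on the block $[r+1,m]\times[s+1,n]$, not the matrix itself; as soon as $m-r\ge 2$ there are in general many non-negative integer matrices with the given margins (e.g.\ margins $(1,1)$ and $(1,1)$ admit two), all carrying the same sign $(-1)^{\sum k_{ij}}$ since $\sum k_{ij}$ is determined by the margins. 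These contributions do not cancel among themselves; the reduction of the total signed count over all $(\sigma,\omega,(k_{ij}))$ to $0$ or $\pm1$ requires a cancellation between different permutation pairs, which your argument does not supply.

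That cancellation is exactly what the ``alternative packaging'' you mention provides, and it is not merely the same computation rearranged. The paper expands $\prod_{D_{r,s}}(1+y_j/x_i)^{-1}=\sum_a(-1)^{|a|}s_a(y_{s+1},\dots,y_n)\,s_a(x_{r+1}^{-1},\dots,x_m^{-1})$ by the Cauchy identity, so that the alternation produces a single generalized Schur function $s_{\tau_1,\dots,\tau_r,s-a_{m-r},\dots,s-a_1}$ (and its $y$-analogue) for each partition $a$; the orthonormality $(s_\lambda,s_\nu)=\delta_{\lambda\nu}$ already established in the proof of Theorem~\ref{form} then shows that at most one $a$ survives, identifies the support condition $f=g*C$ with $C\subset g^{-1}(\circ)\cap\Bbb Z_{\le n-m}$, and yields the value $(-1)^{|a|}\mathrm{sign}(\sigma)\mathrm{sign}(\tau)$, which is converted to $\varepsilon(f,g)$ via Lemma~\ref{sign} and the identity $S(C)=s(m-r)+\tfrac12 r(r-1)-\tfrac12 m(m-1)-|a|$. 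In other words, the Schur grouping is doing real work: it encapsulates the Lindstr\"om--Gessel--Viennot-type cancellation that your raw monomial expansion leaves unproved. To repair your proof, either switch to the Cauchy/Schur expansion at Step 3 or supply that cancellation argument explicitly; your sign bookkeeping at the end is otherwise consistent with the paper's.
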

\begin{proof}  We have  from the definition of Kac module that 
$$
\frac{ch\,K(f)}{\Delta(x,y)}=s_{\lambda}(x)s_{\mu}(y)
$$
and from the definition of Euler character
$$
\frac{ch\, E(g)\Delta(x)x^{\rho_m}\Delta(y)y^{\rho_n}}{\Delta(x,y)}=\left\{\frac{\prod_{(ij)\in D_{+}}\left(1+\frac{y_j}{x_i}\right)\prod_{(ij)\in D_{-}}\left(1+\frac{x_i}{y_j}\right)}{\Delta(x,y)}x^{\tau}y^{\nu}x^{\rho_m}y^{\rho_n}\right\}
$$
$$
=\frac{(x_{r+1}\dots x_m)^s}{(y_1\dots y_s)^{m-r}}\left\{\prod_{(i,j)\in D_{r,s}}\left(1+\frac{y_j}{x_i}\right)^{-1}x^{\tau}y^{\nu}x^{\rho_m}y^{\rho_n}\right\}
$$
where $D_{r,s}=[r+1,m]\times[s+1,n]$. Therefore
$$
\frac{ch\, E(g)\Delta(x)x^{\rho_m}\Delta(y)y^{\rho_n}}{\Delta(x,y)}=
$$
$$
\sum_{a_1\ge a_2\ge\dots\ge a_{m-r}\ge0}(-1)^{|a|}\left\{\frac{(x_{r+1}\dots x_m)^s}{(y_1\dots y_s)^{m-r}}s_{a}( y_{s+1},\dots,y_n)s_a( x^{-1}_{r+1},\dots,x^{-1}_{m})\right\}
$$
where $|a|=a_1+\dots+a_{m-r}$. Further we have
$$
\left\{x^s_{r+1}\dots x^s_ms_{a}(x^{-1}_{r+1},\dots,x_m^{-1})x_1^{\tau_1}\dots x_r^{\tau_r} x^{\rho_m}\right\}=
$$
$$
\{x_1^{\tau_1}\dots x_r^{\tau_r}x_{r+1}^{s-a_{m-r}}\dots x_m^{s-a_1}x^{\rho_m}\}=
s_{\tau_1,\dots,\tau_r,s-a_{m-r},\dots,s-a_1}\Delta(x)x^{\rho_m}.
$$
In the same way it is easy to see that
$$
\left\{y_1^{r-m}\dots y_s^{r-m}s_{a}(y_{s+1},\dots,y_n)y_1^{\nu_1}\dots y_s^{\nu_s} y^{\rho_n}\right\}=
$$
$$
=s_{\nu_1+s-n,\dots,\nu_s+s-n,\,a_{1},\dots,a_{n-s}}\Delta(y)y^{\rho_n}
$$
Therefore 
$$
(ch\,K(f),ch\,E(g))=
$$
$$
\sum_{a}(-1)^{|a|}(s_{\lambda},s_{\tau,s-a_{m-r},\dots, s-a_1})(s_{\mu},s_{\nu_1+r-m,\dots,\nu_{s}+r-m,a})
$$
 It is easy to check that for given $\lambda$  there exists a unique sequence $a$ and  a permutation $\sigma\in S_{m}$ such that
$$
(\lambda_1,\dots,\lambda_m)+\rho_m=\sigma((\tau_1,\dots,\tau_r,s-a_{m-r},\dots,s-a_1)+\rho_m)
$$
or in an equivalent form $\tilde A=\sigma(A,C)$
where 
$$
C=\{s-r-a_{m-r},\dots, s+1-m-a_1\}.
$$
In the same way there exists a permutation $\tau\in S_n$ such that
$$
(\mu_1,\dots,\mu_n)+\rho_n=\tau((\nu_1+r-m,\dots,\nu_s+r-m,\,a_{1},\dots,a_{n-s})+\rho_n)
$$
or in the equivalent form $\tilde B=w_n\circ\tau\circ w_n(CB)$, where $w_n(i)=n-i+1,\, i=1,\dots, n$. Therefore
$$
(K(f),E(g))=(-1)^{|a|}sign(\sigma)sign(\tau).
$$
But 
$$
S(C)=s(m-r)+\frac12r(r-1)-\frac12m(m-1)-|a|
$$
and the Theorem is proved.
\end{proof}

\begin{corollary}\label{prev} Let $f,g$ be such diagrams that
$
(K(f),E(g))\ne0
$
and $g=(A,B)$.
Let us also suppose that for transposition $\tau=\pi_a^b$  we have $\tau(g)=g,\,a\in f^{-1}(\times)$ and $a,b\le n-m$. Then
$$
(ch\,K(\tau(f)),ch\,E(g))=(-1)^{n_{ab}+m_{ab}+a-b}(ch\,K(f),ch\,E(g))
$$
where  $n_{a,b}=|A\cap (a,b)|,\,m_{a,b}=|B\cap(a,b)|$.
\end{corollary}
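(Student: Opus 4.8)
The plan is to apply Theorem~\ref{KacEuler} twice — once to $(ch\,K(f),ch\,E(g))$ and once to $(ch\,K(\tau(f)),ch\,E(g))$ — and then to compare the two signs. Since $(ch\,K(f),ch\,E(g))\ne0$, Theorem~\ref{KacEuler} already gives $f=g*C$ for a (unique) set $C\subset g^{-1}(\circ)\cap\Bbb Z_{\le n-m}$; writing $g=(A,B)$ this means $f=(A\cup C,B\cup C)$, and
$$
(ch\,K(f),ch\,E(g))=(-1)^{Q+S(C)}\,\varepsilon(A,C)\,\varepsilon(C,B),\qquad Q=\frac12 r(r-1)+\frac12 m(m-1)+s(m-r),
$$
where the integer $Q$ is attached to $g$ alone and in particular does not change when $C$ is replaced by another admissible set. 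So the task reduces to describing $\tau(f)$ in the form $g*C'$ and to computing the quotient of the two sign factors.

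First I would unwind the hypotheses. The condition $\tau(g)=g$ is equivalent to $g(a)=g(b)$; since $a$ is a cross of $f=g*C$ and $C$ is disjoint from $A\cup B$, the common value $g(a)=g(b)$ cannot be $>$ or $<$, hence it is $\times$ or $\circ$. If it is $\times$, then $a,b$ are both crosses of $f$ and $\tau$ fixes $f$; the essential case is $g(a)=g(b)=\circ$. Then $a\notin A\cup B$, so $a\in f^{-1}(\times)$ forces $a\in C$, and the move $\tau$ is nontrivial precisely when $b\notin C$, i.e. $f(b)=\circ$; in that case a direct check against the definition of $h*C$ gives
$$
\tau(f)=g*C',\qquad C'=(C\setminus\{a\})\cup\{b\},
$$
and $C'\subset g^{-1}(\circ)\cap\Bbb Z_{\le n-m}$ because $b\in g^{-1}(\circ)$ and $b\le n-m$ — this is where both hypotheses $a,b\le n-m$ and $\tau(g)=g$ are used. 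Hence Theorem~\ref{KacEuler} applies a second time and yields $(ch\,K(\tau(f)),ch\,E(g))=(-1)^{Q+S(C')}\varepsilon(A,C')\varepsilon(C',B)$ with the same $Q$.

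It then remains to compare $(-1)^{S(C)}\varepsilon(A,C)\varepsilon(C,B)$ with $(-1)^{S(C')}\varepsilon(A,C')\varepsilon(C',B)$. The exponent contributes $S(C')-S(C)=b-a$. For the two sign factors I would use Lemma~\ref{sign} in the equivalent form $\varepsilon(X,Y)=(-1)^{\nu(X,Y)}$ with $\nu(X,Y)=|\{(x,y)\in X\times Y:\,x<y\}|$, and observe that replacing $C$ by $C'$ only removes the ordered pairs involving $a$ and inserts those involving $b$. Using $a<b$ and $a,b\notin A\cup B$, this changes $\nu(A,C)$ by $|A\cap[a,b)|=|A\cap(a,b)|=n_{a,b}$ and changes $\nu(C,B)$ by $|B\cap(b,+\infty)|-|B\cap(a,+\infty)|=-|B\cap(a,b]|=-m_{a,b}$. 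Putting the three contributions together,
$$
\frac{(ch\,K(\tau(f)),ch\,E(g))}{(ch\,K(f),ch\,E(g))}=(-1)^{b-a}\,(-1)^{n_{a,b}}\,(-1)^{m_{a,b}}=(-1)^{n_{a,b}+m_{a,b}+a-b},
$$
which is the assertion. The one delicate point is this last step: one must keep precise track of exactly which ordered pairs enter and leave the inversion counts $\nu(A,C)$ and $\nu(C,B)$ when the single element $a$ of $C$ is moved to $b$, and use that the endpoints $a,b$ lie in neither $A$ nor $B$ so that the half-open intervals collapse to the open ones appearing in $n_{a,b},m_{a,b}$. Everything else is just the two applications of Theorem~\ref{KacEuler} together with the elementary sign Lemma~\ref{sign}.
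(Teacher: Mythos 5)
Your argument is correct and follows essentially the same route as the paper: one application of Theorem~\ref{KacEuler} to write $f=g*C$, the observation that $\tau(g)=g$ forces $\tau(f)=g*\tau(C)$, a second application of the theorem, and then the comparison of the signs $S(C)$, $\varepsilon(A,C)$, $\varepsilon(C,B)$ via Lemma~\ref{sign}. Your extra care in isolating the nontrivial case $g(a)=g(b)=\circ$, $a\in C$, $b\notin C$ (the paper tacitly assumes $\tau$ acts nontrivially on $C$) and in tracking the inversion counts is a welcome tightening, not a departure.
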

\begin{proof} By Theorem \ref{KacEuler} $f=g*C$  where $C\subset g^{-1}({\circ})\cap\Bbb Z_{\le n-m}$. Since $\tau(g)=g$  we have  $\tau(f)=g*\tau(C)$ and by Theorem \ref{KacEuler} we have
$$
(K(f),E(g))=(-1)^{\frac12r(r-1)+\frac12m(m-1)+s(m-r)+S(C)}\varepsilon(A,C)\varepsilon(C,B)
$$
$$
(K(\tau(f)),E(g))=(-1)^{\frac12r(r-1)+\frac12m(m-1)+s(m-r)+S(\tau(C))}\varepsilon(A,\tau(C))\varepsilon(\tau(C),B).
$$
Further we have  the following equalities in  $\Bbb Z_2$:  $S(C)-S(\tau(C))=a-b$ and by Lemma \ref{sign}
$$
 \varepsilon(A,C)-\varepsilon(A,\tau(C))=|A\cap(a,b)|,\,\, \varepsilon(C,B)-\varepsilon(\tau(C),B)=|B\cap(a,b)|.
$$
Lemma is proved.
\end{proof}
\begin{corollary}\label{prod} Let $f,g,h$ be  such diagrams that  
$$
(ch\,P(f),ch\,K(g))\ne0,\quad(ch\,K(g),ch\,E(h))\ne0 
$$
and $\tau=\pi_a^b,\,a,b\le n-m$ be an admissible transposition for $f$ such that  $a,b\notin h^{-1}(\times)$. Then
$$
(ch\,K(\tau(g)),ch\,E(h))+(ch\,K(g),ch\,E(h))=0
$$
\end{corollary}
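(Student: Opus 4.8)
The plan is to reduce the identity to a single application of Corollary \ref{prev} and then to check that the sign it produces equals $-1$; the whole point will be a parity count coming from the admissibility of $\tau$ for $f$.

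First I would unpack the hypotheses combinatorially. Since $(ch\,K(g),ch\,E(h))\neq0$, Theorem \ref{KacEuler} gives $g=h*C'$ with $C'\subset h^{-1}(\circ)\cap\Bbb Z_{\le n-m}$; and since $(ch\,P(f),ch\,K(g))\neq0$, Theorem \ref{brundan} gives $g=\pi_C(f)$ with $C\subset f^{-1}(\times)$. Both operations $h\mapsto h*C'$ and $f\mapsto\pi_C(f)$ only interchange the symbols $\times$ and $\circ$ and never touch $<,>$, so the set $S$ of positions carrying $\times$ or $\circ$ is the same for $f$, $g$ and $h$. Now $\tau=\pi_a^b$ admissible for $f$ means in particular $a\in f^{-1}(\times)\subset S$, $b\in f^{-1}(\circ)\subset S$, and the weight-zero condition $\sum_{i\in[a,b]}(\varphi\circ f)(i)=0$ of Definition \ref{defadm}, together with $f(a)=\times$ and $f(b)=\circ$, forces $|\{i\in(a,b):f(i)=\times\}|=|\{i\in(a,b):f(i)=\circ\}|$; hence $|S\cap(a,b)|$ is even.

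Next I would pin down $\tau(h)$ and $\tau(g)$. Since $a,b\in S$ and, by hypothesis, $a,b\notin h^{-1}(\times)$, we have $a,b\in h^{-1}(\circ)$, so $\tau$ fixes $h$. To show $\tau(g)\neq g$, look at position $a$: it carries $\times$ in $f$, and in passing to $g=\pi_C(f)$ it is either untouched (when $a\notin C$, so $g(a)=\times$ and, using part (1) of Lemma \ref{prop} to exclude $b$ from being the right endpoint of another factor of $\pi_C$, also $g(b)=f(b)=\circ$) or moved by the factor $\pi_a^b$ of $\pi_C$, $b$ being the unique admissible partner of $a$ (in which case $g(a)=\circ$ and $g(b)=\times$). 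In either case $\{g(a),g(b)\}=\{\times,\circ\}$, so $\tau(g)\neq g$; moreover $\tau(g)=h*C''$ where $C''$ is obtained from $C'$ by deleting whichever of $a,b$ it contains and adjoining the other, and $C''\subset h^{-1}(\circ)\cap\Bbb Z_{\le n-m}$, so by Theorem \ref{KacEuler} both $(ch\,K(g),ch\,E(h))$ and $(ch\,K(\tau(g)),ch\,E(h))$ are nonzero.

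Now I would apply Corollary \ref{prev}, taking its ``$f$'' to be whichever of $g,\tau(g)$ has $a$ as a $\times$-position and its ``$g$'' to be $h=(A,B)$, to obtain
$$
(ch\,K(\tau(g)),ch\,E(h))=(-1)^{n_{ab}+m_{ab}+a-b}(ch\,K(g),ch\,E(h)),\qquad n_{ab}=|A\cap(a,b)|,\ \ m_{ab}=|B\cap(a,b)|.
$$
Writing $t,p,q,s$ for the numbers of positions of $h$ in $(a,b)$ carrying $\times,>,<,\circ$ respectively, one has $n_{ab}+m_{ab}=2t+p+q$ and $b-a-1=t+p+q+s$, so $n_{ab}+m_{ab}+a-b\equiv t+s+1\pmod2$; since $t+s=|S\cap(a,b)|$ is even, the sign equals $-1$, and the claimed identity $(ch\,K(\tau(g)),ch\,E(h))+(ch\,K(g),ch\,E(h))=0$ follows. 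The step I expect to be the real obstacle is the third paragraph --- establishing $\tau(g)\neq g$ and the form $\tau(g)=h*C''$ --- since it forces one to reconcile the Brundan description $g=\pi_C(f)$ with the Kac--Euler description $g=h*C'$ through the nesting property of admissible transpositions; it cannot be bypassed, because both pairings equal $\pm1$, so the identity is simply false when $\tau(g)=g$.
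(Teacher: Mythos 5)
Your argument is correct and follows essentially the same route as the paper: reduce to Corollary \ref{prev} and show the exponent $n_{ab}+m_{ab}+a-b$ is odd by a parity count that ultimately rests on the weight-zero condition in the definition of an admissible transposition (you count crosses and circles of $(a,b)$ in $f$ and transfer to $h$ via the invariance of the $\times/\circ$ support, where the paper transfers via $\varphi\circ g=\varphi\circ f+2\sum(\delta_{d_i}-\delta_{c_i})$). Your extra verifications that $\tau(h)=h$, that exactly one of $g,\tau(g)$ has a cross at $a$, and that both pairings are nonzero are hypotheses the paper leaves implicit, so no change of method is involved.
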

\begin{proof} By  Corollary \ref{prev} it is enough to prove that $n_{ab}+m_{ab}+a-b$ is an odd number. Let us denote by $(a,b)_{x}=g^{-1}(x)\cap (a,b)$. Then we have 
$$
(a,b)=(a,b)_{>}\cup (a,b)_{<}\cup (a,b)_{\times}\cup (a,b)_{\circ}
$$
Therefore 
$$
b-a-1=|(a,b)_{>}|+| (a,b)_{<}|+| (a,b)_{\times}|+| (a,b)_{\circ}|.
$$
where $|A|$ means the cardinality  of the set $A$.
But
 $$
 n_{a,b}=|(a,b)_{>}|+| (a,b)_{\times}|,\,  m_{a,b}=|(a,b)_{<}|+| (a,b)_{\times}|,\,
$$
Therefore it is enough to prove that 
$
| (a,b)_{\times}|+| (a,b)_{\circ}|  
$
is an even number. Let $C=\{c_1,\dots, c_r\}$.  We have 
$$
\varphi\circ g=\varphi\circ f+2\sum_{i=1}^r(\delta_{d_i}-\delta_{c_i})
$$
and by definition  admissible transposition   we have $\sum_{i\in (a,b)}\varphi\circ f(i)=0$. Therefore
$$
| (a,b)_{\times}|-| (a,b)_{\circ}|=\sum_{i\in (a,b)}\varphi\circ g(i)=2\sum_{i=1}^r(\delta_{d_i}-\delta_{c_i}). 
$$
Corollary is proved.
\end{proof}
\section{Partially polynomial representations}
 
\begin{definition} A weight $\chi\in P$ 
$$
\chi=\lambda_1\varepsilon_1+\dots+\lambda_m\varepsilon_m+\mu_1\delta_1+\dots+\mu_n\delta_n
$$is called partially polynomial  (in $y_1,\dots, y_m$) if $\mu_1,\dots,\mu_m\in \Bbb Z_{\ge0}$.
\end{definition}
\begin{corollary} A diagram  $f=(A,B)$ corresponds to the partially polynomial  highest  weight if and only if all elements of $B$ are not grater than  $n-m$.
\end{corollary}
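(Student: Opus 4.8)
The plan is to unwind the dictionary between highest weights and diagrams. Write $\chi=\lambda_1\varepsilon_1+\dots+\lambda_m\varepsilon_m+\mu_1\delta_1+\dots+\mu_n\delta_n\in P^+$, so in particular $\mu_1\ge\mu_2\ge\dots\ge\mu_n$. By definition the pair $(A,B)$ attached to $\chi$ has $B=\{(\chi+\rho,\delta_1),\dots,(\chi+\rho,\delta_n)\}$, and inserting the explicit $\rho=\sum_i(1-i)\varepsilon_i+\sum_j(m-j)\delta_j$, together with the sign convention on the pairing of the $\delta$'s already built into the formula $ch\,K(\tilde\chi)=\Delta(x,y)s_{\lambda}(x)s_{\mu}(y)$ and its diagram $(\tilde A,\tilde B)$ recorded above, one gets
$$
B=\{\,j-m-\mu_j\ :\ 1\le j\le n\,\}.
$$
Since the diagram of a highest weight does not depend on which module ($L(\chi)$ or $K(\chi)$) we attach to it, this $B$ is exactly the set $\tilde B$ listed above for Kac modules, so no genuinely new computation is needed.

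Next I would use dominance of $\chi$. Because $\mu_1\ge\mu_2\ge\dots\ge\mu_n$, the integers $j-\mu_j$ are strictly increasing in $j$, hence the largest element of $B$ is $n-m-\mu_n$, attained at $j=n$. Therefore all elements of $B$ are $\le n-m$ if and only if $n-m-\mu_n\le n-m$, i.e. $\mu_n\ge0$; and, again because $\mu_1\ge\dots\ge\mu_n$, this is equivalent to $\mu_j\ge0$ for every $j$, which is precisely the condition that $\chi$ be partially polynomial. This proves the corollary. (Note that the elements of $A$ are entirely unconstrained, as expected, since partial polynomiality only restricts the $\delta$-part of $\chi$.)

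I do not expect a real obstacle here: the only point requiring care is pinning down the $\rho$-shift and the parity/sign conventions so that $B$ comes out as $\{j-m-\mu_j\}$ rather than $\{\mu_j+m-j\}$. Once that is fixed, the statement is immediate from the monotonicity of the sequence $\mu$, which forces the binding inequality to occur at $j=n$.
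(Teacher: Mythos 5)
Your proof is correct and is exactly the unwinding the paper intends (the paper states this corollary without proof): with $\rho=\sum_i(1-i)\varepsilon_i+\sum_j(m-j)\delta_j$ one gets $B=\{j-m-\mu_j\}$, matching the set $\tilde B$ displayed for Kac modules, and dominance $\mu_1\ge\dots\ge\mu_n$ makes $n-m-\mu_n$ the largest element, so the condition $\max B\le n-m$ is equivalent to $\mu_n\ge 0$, i.e.\ to all $\mu_j\ge 0$. No issues.
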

\begin{definition} A representation $V$ of $\frak{gl}(m,n)$ is called partially polynomial (in $y_1,\dots, y_n$) if all its weights are partially polynomials or  its character is a polynomial  in $y_1,\dots, y_n$.  
\end{definition}

We should note that there is no loss of generality in restricting our attention to partially polynomial representations, since an arbitrary finite dimensional irreducible representation of $\frak{gl}(m,n)$  can be obtained from some  partially polynomal representation  by tensoring with a one dimensional representation.
\begin{example} Standard representation with the character $x_1+\dots+x_n+y_1+\dots+y_m$ is partially polynomial.  One dimensional representation with  character $\frac{y_1\dots y_n}{x_1\dots x_m}$ is also partially polynomial representation.
\end{example}

The subcategory of the modules with partially polynomials weights will be denote by  $\mathcal F^+$.

\begin{definition}  For any $M\in \mathcal F$ let us denote by $M^{-}$ the submodule generated by all weight vectors with non  partially polynomials weights. Let us also define a functor  $F^+: \mathcal F\rightarrow \mathcal F^+$ by the following formula
$$
F^+(M)=M/M^-
$$
\end{definition}

\begin{lemma}\label{exact} 
$1)$ Functor $F^+$ is right exact.

$2)$ Functor $F^+$ maps projective objects in $\mathcal F$ to projective objects in $\mathcal F^+$.
\end{lemma}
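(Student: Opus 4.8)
The plan is to realize $F^+$ as a left adjoint of the inclusion $\iota\colon\mathcal F^+\hookrightarrow\mathcal F$ and to read off both assertions from general homological algebra. First one checks that $F^+$ is genuinely a functor: a $\frak g$-morphism $\phi\colon M\to N$ preserves weights, hence carries each weight vector of $M$ whose weight is not partially polynomial to such a vector of $N$, so $\phi(M^-)\subseteq N^-$ and $\phi$ descends to a map $F^+(M)\to F^+(N)$. Next, $\mathcal F^+$ is a Serre subcategory of $\mathcal F$: passing to the $\chi$-weight space is exact, so for $0\to A\to B\to C\to0$ one has $\operatorname{wt}(B)=\operatorname{wt}(A)\cup\operatorname{wt}(C)$, whence $\mathcal F^+$ is closed under subobjects, quotients and extensions and $\iota$ is exact. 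Finally, for $M\in\mathcal F$ and $N\in\mathcal F^+$ every $\frak g$-morphism $M\to N$ annihilates each non partially polynomial weight vector of $M$ (its image lies in $N_\chi=0$), hence annihilates the submodule $M^-$ they generate; restriction along $M\twoheadrightarrow M/M^-$ thus gives a natural isomorphism $\operatorname{Hom}_{\mathcal F^+}(F^+(M),N)\cong\operatorname{Hom}_{\mathcal F}(M,\iota N)$, so $F^+\dashv\iota$.

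Part $1)$ is then immediate: a left adjoint preserves colimits, in particular cokernels, and an additive functor between abelian categories that preserves cokernels is right exact. If a hands-on verification is preferred, given a right-exact sequence $A\xrightarrow{f}B\xrightarrow{g}C\to0$, surjectivity of $F^+(B)\to F^+(C)$ holds because $g(B^-)\subseteq C^-$, while exactness in the middle amounts to $g^{-1}(C^-)=f(A)+B^-$; since $f(A)=\ker g$ and $B^-\subseteq g^{-1}(C^-)$, this reduces to the surjectivity of $B^-\to C^-$, which holds because every non partially polynomial weight vector of $C$ lifts through the weight-space surjection $B_\chi\twoheadrightarrow C_\chi$ to a vector in $B^-$, and $B^-,C^-$ are the submodules generated by such vectors.

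Part $2)$ is the standard fact that a left adjoint whose right adjoint is exact preserves projectives: for $P$ projective in $\mathcal F$ the functor $\operatorname{Hom}_{\mathcal F^+}(F^+(P),-)\cong\operatorname{Hom}_{\mathcal F}(P,\iota(-))$ is the composite of the exact functor $\iota$ with the exact functor $\operatorname{Hom}_{\mathcal F}(P,-)$, hence exact, i.e.\ $F^+(P)$ is projective in $\mathcal F^+$. Concretely, to lift a map $F^+(P)\to N'$ along an epimorphism $N\twoheadrightarrow N'$ in $\mathcal F^+$, precompose with $P\twoheadrightarrow F^+(P)$, lift to $P\to N$ using projectivity of $P$ in $\mathcal F$, and note that this lift factors through $F^+(P)$ since $N\in\mathcal F^+$. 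I do not anticipate a real obstacle here; the only non-formal ingredient is the lifting of weight vectors through $B_\chi\twoheadrightarrow C_\chi$ (hence the surjectivity of $B^-\to C^-$), which is the one spot where I would be careful, everything else being a diagram chase or an appeal to adjoint-functor generalities.
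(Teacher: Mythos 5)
Your proof is correct and follows essentially the same route as the paper: the paper likewise observes that $\mathrm{Hom}_{\mathcal F}(M,N)=\mathrm{Hom}_{\mathcal F^+}(F^+(M),N)$ for $N\in\mathcal F^+$, i.e.\ that $F^+$ is left adjoint to the (exact) inclusion, and deduces both statements from this. You merely supply the details (functoriality, why morphisms kill $M^-$, and why an exact right adjoint forces preservation of projectives) that the paper leaves implicit.
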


\begin{proof}  

$1)$
By definition of $M^{-}$ we have the following equality for all $N\in \mathcal F^+$
$$
Hom_{\mathcal F}(M,N)=Hom_{\mathcal F^+}(F(M),N)
$$
Consider a  functor $ G: \mathcal F^+\rightarrow \mathcal F$   such that $G(N)=N$. Then the above equality means that $G$ is right adjoint to $F$. Therefore $F$ is right exact. 

$2)$ follows from  $1)$.
\end{proof}

\begin{lemma} The following statements hold true

$1)$ Let $\chi\in P^+$ and $ K(\chi)$ be  the corresponding  Kac module, then
$
F^+(K(\chi))=K(\chi)
$ if $\lambda$ is a partially polynomial weight and $0$ otherwise.

$2)$ Let $L(\chi)$ be the irreducible finite dimensional module corresponding to the weight $\chi$. Then $F^+(L(\chi))=L(\chi)$ if $\chi$ is a partially polynomial weight and $0$ otherwise.
\end{lemma}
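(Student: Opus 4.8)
The plan is to reduce both parts to two elementary facts: a Kac module and an irreducible module are each cyclic, generated by a highest weight vector, and a partially polynomial highest weight forces \emph{every} weight of the corresponding Kac module to be partially polynomial. Throughout I will use the characterization from the Corollary above: $\chi=(\lambda,\mu)\in P^+$ is partially polynomial exactly when the set $B$ of its diagram lies in $\Bbb Z_{\le n-m}$, and for a highest weight this is the same as saying that $\mu$ is a partition (all $\mu_j\ge 0$), equivalently that the Schur function $s_\mu(y)$ is a genuine polynomial in $y_1,\dots,y_n$ and not merely a Laurent polynomial.

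For part $1)$ suppose first that $\chi$ is partially polynomial. Using the character formula $ch\,K(\chi)=\Delta(x,y)s_\lambda(x)s_\mu(y)$ with $\Delta(x,y)=\prod_{i,j}(1+y_j x_i^{-1})$, I observe that $\Delta(x,y)$ and $s_\lambda(x)$ are polynomials in $y_1,\dots,y_n$ (the former visibly, the latter trivially), while $s_\mu(y)$ is a polynomial in $y_1,\dots,y_n$ precisely because $\mu$ is a partition. Hence $ch\,K(\chi)$ is a polynomial in $y_1,\dots,y_n$, so every weight of $K(\chi)$ has nonnegative $\delta$-coordinates, and in particular every weight is partially polynomial; therefore $K(\chi)^{-}=0$ and $F^+(K(\chi))=K(\chi)$. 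If instead $\chi$ is not partially polynomial, then $\chi$ itself is a non-partially-polynomial weight of $K(\chi)$, and since the highest weight vector $v_\chi$ generates $K(\chi)$ and lies in $K(\chi)^{-}$, we get $K(\chi)^{-}=K(\chi)$ and $F^+(K(\chi))=0$.

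For part $2)$ I will use that $L(\chi)$ is the unique irreducible quotient of $K(\chi)$, so $wt(L(\chi))\subseteq wt(K(\chi))$. If $\chi$ is partially polynomial, then by part $1)$ all weights of $K(\chi)$, hence all weights of $L(\chi)$, are partially polynomial; therefore $L(\chi)^{-}=0$ and $F^+(L(\chi))=L(\chi)$. If $\chi$ is not partially polynomial, then the highest weight vector of $L(\chi)$ is a nonzero weight vector of non-partially-polynomial weight and generates $L(\chi)$ by irreducibility, so $L(\chi)^{-}=L(\chi)$ and $F^+(L(\chi))=0$; alternatively this last case follows at once from part $1)$ and the right-exactness of $F^+$ in Lemma \ref{exact}, applied to a surjection $K(\chi)\to L(\chi)\to 0$.

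I do not expect a serious obstacle. The one point requiring care is the opening bookkeeping: identifying partial polynomiality of $\chi\in P^+$ with the condition that $\mu$ be a partition — via the Corollary and the diagram attached to a Kac module — and deducing that $s_\mu(y)$ then has only nonnegative exponents. Once this is in place, part $1)$ is immediate from the Kac character formula and part $2)$ follows by passing to the irreducible quotient.
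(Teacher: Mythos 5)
Your proof is correct and follows the same structure as the paper's: in both parts the non-partially-polynomial case is handled by cyclicity (equivalently, for $L(\chi)$, by right-exactness of $F^+$ applied to $K(\chi)\to L(\chi)\to 0$), and the partially polynomial case reduces to showing that every weight of $K(\chi)$ is partially polynomial, with part $2)$ then following because $L(\chi)$ is a quotient of $K(\chi)$. The one difference is how that key step is verified: you use the factorization $ch\,K(\chi)=\Delta(x,y)s_\lambda(x)s_\mu(y)$ to see the character is polynomial in $y_1,\dots,y_n$, while the paper argues that subtracting positive roots from $\chi$ preserves partial polynomiality — your version is actually the more careful one, since subtracting $\delta_k-\delta_l$ can formally produce a negative $\delta$-coordinate, and one must invoke the fact that such weights do not occur in the module (which your character argument handles automatically).
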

\begin{proof}  
$1)$ Let $\chi$ be a partially polynomial highest weight. Therefore $\chi-\alpha$ is also partially polynomial for any positive root $\alpha$. Therefore all weights of the module $K(\chi)$ are partially polynomials, so $K(\chi)^-=0$ and $F^+(K(\chi))=K(\chi)$. If $\chi$ is not partially polynomial then $K(\chi)^-=K(\chi)$ since it is generated by the vector of the weight $\chi$. Therefore $F^+(K(\chi))=0$. 

$2)$  Let  $\chi$ be a  partially polynomial weight. Since $L(\chi)$ is a quotient  of $K(\chi)$ then by the first statement we have $F^+(L(\chi))=L(\chi)$.  If $\chi$ is not partially polynomial then by the first statement of the Lemma and by Lemma \ref{exact}  $F^+(L(\lambda))=0$.
\end{proof}
\begin{corollary} Let $M\in \mathcal F$  and suppose that it has composition series of Kac modules and  in  the Grothendieck group of $\mathcal F$   we have 
$$
[M]=\sum_{\lambda\in I}m_{\chi}[K(\chi)]
$$
Then in the Grothendieck group of $\mathcal F^+$ we have 
$$
[F^+([M])=\sum_{\chi\in I^{pol}} m_{\chi}[K(\chi)]
$$
where $I^{pol}$ is a subset of partially polynomial weights of $I$.
\end{corollary}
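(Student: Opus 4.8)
The plan is to apply the functor $F^+$ to a Kac flag of $M$ and to check that it is carried to a Kac flag of $F^+(M)$ in which the partially polynomial pieces survive and all others are killed.

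First I would fix a filtration $0=M_0\subset M_1\subset\dots\subset M_r=M$ with $M_i/M_{i-1}\cong K(\chi_i)$; since the classes $[K(\chi)]$ are linearly independent in $K(\mathcal F)$ we have $m_\chi=\#\{\,i:\chi_i=\chi\,\}$. Applying the right exact functor $F^+$ (Lemma \ref{exact}) to each short exact sequence $0\to M_{i-1}\to M_i\to K(\chi_i)\to 0$ produces an exact sequence
$$F^+(M_{i-1})\xrightarrow{\,\alpha_i\,}F^+(M_i)\longrightarrow F^+(K(\chi_i))\longrightarrow 0,$$
and by the Lemma preceding this Corollary the term $F^+(K(\chi_i))$ equals $K(\chi_i)$ if $\chi_i$ is partially polynomial and $0$ otherwise. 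Hence, once we know that every $\alpha_i$ is injective, $0=F^+(M_0)\subset F^+(M_1)\subset\dots\subset F^+(M_r)=F^+(M)$ is a Kac flag of $F^+(M)$ whose nonzero subquotients are precisely the $K(\chi_i)$ with $\chi_i\in I^{pol}$, and passing to Grothendieck groups gives $[F^+(M)]=\sum_{\chi\in I^{pol}}m_\chi[K(\chi)]$.

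The only real obstacle is the injectivity of the maps $\alpha_i$: on the whole of $\mathcal F$ the functor $F^+$ is not exact (for instance it annihilates the inclusion of a partially polynomial simple submodule into a Kac module whose highest weight is not partially polynomial), so this step must exploit that the subquotients of the flag are Kac modules. Here I would use that, by the proof of Lemma \ref{exact}, $F^+$ is left adjoint to the inclusion $G\colon\mathcal F^+\hookrightarrow\mathcal F$. Consequently, for $N\in\mathcal F^+$ the map $\operatorname{Hom}_{\mathcal F^+}(F^+(M_i),N)\to\operatorname{Hom}_{\mathcal F^+}(F^+(M_{i-1}),N)$ induced by $\alpha_i$ is identified with the restriction map $\operatorname{Hom}_{\mathcal F}(M_i,N)\to\operatorname{Hom}_{\mathcal F}(M_{i-1},N)$, whose cokernel embeds into $\operatorname{Ext}^1_{\mathcal F}(K(\chi_i),N)$. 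Now $\mathcal F^+$ carries the highest weight structure induced from $\mathcal F$, with standard objects the Kac modules $K(g)$ and costandard objects the dual Kac modules $\nabla(g)$ for $g$ partially polynomial (both lie in $\mathcal F^+$, all of their weights being partially polynomial); in particular $\mathcal F^+$ has enough injectives and every injective object of $\mathcal F^+$ has a $\nabla$-filtration. The standard-costandard orthogonality $\operatorname{Ext}^1_{\mathcal F}(K(\chi),\nabla(g))=0$ for all $\chi,g$ (a reformulation of the BGG reciprocity already used above, cf.\ \cite{Z}) then forces $\operatorname{Ext}^1_{\mathcal F}(K(\chi_i),N)=0$ for every injective $N\in\mathcal F^+$, so the restriction maps above are surjective for all such $N$; embedding $\ker\alpha_i$ into an injective of $\mathcal F^+$ shows $\ker\alpha_i=0$.

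With the injectivity of the $\alpha_i$ in hand the conclusion follows as in the second paragraph. I would also remark that $I^{pol}$ is intrinsic to $M$ — it is the set of partially polynomial $\chi$ with $m_\chi\ne 0$ — so the statement does not depend on the chosen Kac flag.
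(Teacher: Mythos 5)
Your reduction is the right one, and you have correctly located the real content of the statement: the paper offers no proof here, and the Corollary is \emph{not} a formal consequence of right exactness alone, since $F^+$ kills the partially polynomial composition factors sitting inside the non--partially-polynomial Kac modules while preserving those inside the partially polynomial ones. So everything hinges, exactly as you say, on the injectivity of the maps $\alpha_i\colon F^+(M_{i-1})\to F^+(M_i)$, equivalently on the equality $M_{i-1}\cap M_i^-=M_{i-1}^-$.

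The gap is in how you discharge that step. You derive injectivity from the assertion that $\mathcal F^+$ is a highest weight category whose injectives have $\nabla$-filtrations by duals of partially polynomial Kac modules. That assertion is true, but it is nowhere established in the paper, it is not an innocent citation, and inside the paper's own development it is dangerously close to circular: the $\Delta$-filtration of projectives of $\mathcal F^+$ is precisely what the paper extracts \emph{from} this Corollary (applied to $M=P(f)$ together with Lemma \ref{exact}), and your input is the dual statement for injectives. To make your route airtight you would have to invoke the truncation theorem for highest weight categories (Cline--Parshall--Scott/Donkin) and, crucially, verify its hypothesis that the partially polynomial dominant weights form an \emph{ideal} in the highest-weight order --- a small argument with the dominance order on the $\delta$-part that you do not supply. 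A more economical and self-contained fix uses only $\operatorname{Ext}^1_{\mathcal F}(K(\lambda),K(\mu))=0$ unless $\lambda<\mu$ together with that same downward-closedness: reorder the Kac flag so that all non--partially-polynomial factors occur at the bottom, giving $M'\subset M$ with $M'$ filtered by the non--partially-polynomial $K(\chi_i)$ and $M/M'\in\mathcal F^+$. Then $M'\subseteq M^-$ because each step of its flag is generated by a non--partially-polynomial highest weight vector, and $M^-\subseteq M'$ because $M/M'$ has only partially polynomial weights; hence $F^+(M)=M/M'$ and the class in the Grothendieck group is as claimed, with no appeal to injectives of $\mathcal F^+$ at all.
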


\section{Projective covers and Euler  characters}
  In this section we are going to give an algorithm  to represent   Euler  characters as the sum  of  characters of irreducible modules. For any finite dimensional module $V$ we have the following formula in the ring $\Lambda^{\pm}_{m,n}$
 $$
 ch\,V=\sum_{P}(ch\, P,\,ch\,V)ch\,L_P
 $$
 where sum is taken over all projective covers and $L_P$ is  the irreducible module corresponding to $P$.
 
 So if $V=E(h)$ is an Euler character then we need to calculate $(ch\, P(f),\,ch\,E(h))$ for fixed $h$ and  all $f$. In order to do so we calculate first $(ch\, P(f),\,ch\,E(h))$ for fixed $f$ and  all $h$. 
 \begin{definition} 
 Let  $f$  be a diagram. Let us set
 $$
f^{-1}_0(\times)=\{a\in f^{-1}(\times)\mid  \pi_a^b\,\,\text {is admissible and}\,\, b\le n-m\},
$$
$$
f^{-1}_1(\times)=\{a\in f^{-1}(\times)\mid  \pi_a^b\,\,\text {is admissible and}\,\, b> n-m\},
$$

\end{definition}
\begin{definition}
Let us   set
$$
 \mathcal E(f)=\{h\mid (ch\,P(f),ch\,E(h))\ne0\}
$$
 \end{definition}
 \begin{definition}\label{prod1} Let $f$ be a diagram and $B\subset f^{-1}(\times)$. We will  denote by  $f_B$ the following diagram
$$
f^{-1}_B(x)=\begin{cases} f^{-1}(x),\,x=<,>\\
f^{-1}(\times)\setminus B\\
f^{-1}(\circ)\cup B\\
\end{cases}
$$
And for a diagram $f$ we will  denote by $f_{>d}$ the diagram $f_B$ in the case when  $B=f^{-1}(\times)\cap \Bbb Z_{>d}$. In other words $f_{> d}$ is the diagram which can be obtained from $f$ by deleting from $f^{-1}(\times)$ all  the numbers which are strictly grater that $d$ and adding them to $f^{-1}(\circ)$.
\end{definition} 

 The following Theorem describes the pairing between projective covers and Euler characters.
 
 \begin{thm} \label{Euler1}  The following equalities hold true

$1)$ 
\begin{equation}\label{pq}
 \mathcal E(f)=\{h\mid h*A\in \mathcal P(f),\, A\subset f_1^{-1}(\times)\cap \Bbb Z_{\le n-m}\}
 \end{equation}
 
 $2)$ If $h\in \mathcal E(f)$ then
 $$
 (ch\,P(f), ch\,E(h))=(ch\,K(h*A),ch\,E(h))
 $$
 \end{thm}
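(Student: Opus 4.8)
The plan is to expand $(ch\,P(f),ch\,E(h))$ along the Kac flag of $P(f)$ and to show that its terms cancel in pairs, leaving at most one. By Theorem \ref{brundan} one has $ch\,P(f)=\sum_{g\in\mathcal P(f)}ch\,K(g)$, so
$$
(ch\,P(f),ch\,E(h))=\sum_{g\in\mathcal P(f)}(ch\,K(g),ch\,E(h)),
$$
and by Theorem \ref{KacEuler} the $g$–summand equals $\varepsilon(g,h)$ when $g=h*C$ for some $C\subset h^{-1}(\circ)\cap\Bbb Z_{\le n-m}$ and vanishes otherwise. Since every $g\in\mathcal P(f)$ is obtained from $f$ by interchanging $\times$'s and $\circ$'s, it has the same $<,>$–pattern as $f$; hence the relation $g=h*C$ with such a $C$ is equivalent to: $f$ and $h$ agree on $f^{-1}(<)\cup f^{-1}(>)$, $h^{-1}(\times)\subset g^{-1}(\times)$, and $g^{-1}(\times)\cap\Bbb Z_{>n-m}=h^{-1}(\times)\cap\Bbb Z_{>n-m}$. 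Writing $\mathcal S$ for the set of $g\in\mathcal P(f)$ with these properties, we get $(ch\,P(f),ch\,E(h))=\sum_{g\in\mathcal S}\varepsilon(g,h)$, and the task is to describe $\mathcal S$.

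For $a\in f^{-1}(\times)$ write $\pi_a=\pi_a^{b}$ for the admissible transposition (so $f^{-1}_0(\times)\subset\Bbb Z_{\le n-m}$ automatically), and set $T=\{a\in f^{-1}_0(\times):a\notin h^{-1}(\times)\ \text{and}\ b\notin h^{-1}(\times)\}$. Suppose $T\neq\emptyset$ and fix $a_0\in T$ with partner $b_0$, $\tau_0=\pi_{a_0}^{b_0}$. Because admissible transpositions commute, $\tau_0$ carries $g=\pi_C(f)\in\mathcal P(f)$ to $\pi_{C\triangle\{a_0\}}(f)\in\mathcal P(f)$; as $\tau_0$ alters only the positions $a_0,b_0\le n-m$, both outside $h^{-1}(\times)$, it preserves all the conditions defining $\mathcal S$, and it has no fixed point. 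By Corollary \ref{prod}, $(ch\,K(\tau_0(g)),ch\,E(h))=-(ch\,K(g),ch\,E(h))$, so $\tau_0$ is a fixed–point–free involution of $\mathcal S$ under which the summands cancel in pairs; hence $(ch\,P(f),ch\,E(h))=0$ and $h\notin\mathcal E(f)$. Moreover no admissible $A$ occurs in this case: if $h*A=\pi_C(f)$ with $A\subset f^{-1}_1(\times)\cap\Bbb Z_{\le n-m}$, then $a_0\notin h^{-1}(\times)\cup A$ forces $a_0\in C$, whence $b_0\in b(C)\subset(h*A)^{-1}(\times)=h^{-1}(\times)\cup A$, which is impossible since $b_0\in f^{-1}(\circ)\setminus h^{-1}(\times)$ and $A\subset f^{-1}(\times)$.

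Now assume $T=\emptyset$; the main step is that then $\mathcal S$ has at most one element. Let $g=\pi_C(f)\in\mathcal S$. On $f^{-1}_0(\times)$ the set $C$ is forced: since $T=\emptyset$, for each $a\in f^{-1}_0(\times)$ one of $a,b$ lies in $h^{-1}(\times)$, and comparing with $h^{-1}(\times)\subset g^{-1}(\times)=(f^{-1}(\times)\setminus C)\cup b(C)$ yields $a\in C\iff b\in h^{-1}(\times)$ (and $a,b$ cannot both lie in $h^{-1}(\times)$, for then $\mathcal S=\emptyset$). On $f^{-1}_1(\times)$ the set $C$ is forced as well: the partner map is injective from $f^{-1}_1(\times)$ into $f^{-1}(\circ)\cap\Bbb Z_{>n-m}$, and with $X_1=f^{-1}(\times)\cap\Bbb Z_{>n-m}$ one computes $g^{-1}(\times)\cap\Bbb Z_{>n-m}=(X_1\setminus C)\sqcup b(C\cap f^{-1}_1(\times))$, a disjoint union, so the requirement that this equal $h^{-1}(\times)\cap\Bbb Z_{>n-m}$ determines $X_1\cap C$ and $b(C\cap f^{-1}_1(\times))$, hence $C\cap f^{-1}_1(\times)$. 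Thus $g$ is uniquely determined by $f$ and $h$, so $\mathcal S$ is empty or a single point.

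Finally, suppose $\mathcal S=\{g\}$, $g=\pi_C(f)$. Since $h^{-1}(\times)\subset g^{-1}(\times)$ and $g,h$ agree on $<,>$, we may write $g=h*A$ with $A=g^{-1}(\times)\setminus h^{-1}(\times)$; the $\Bbb Z_{>n-m}$–condition gives $A\subset\Bbb Z_{\le n-m}$, the two descriptions of $C$ give $b(C)\subset h^{-1}(\times)$ (hence $A\subset f^{-1}(\times)$), and $A\cap f^{-1}_0(\times)=\emptyset$ since any element there would lie in $T$. So $A\subset f^{-1}_1(\times)\cap\Bbb Z_{\le n-m}$, $h*A=g\in\mathcal P(f)$, $h\in\mathcal E(f)$, and $(ch\,P(f),ch\,E(h))=\varepsilon(g,h)=(ch\,K(h*A),ch\,E(h))$, which is part 2). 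Conversely, any admissible $A'$ produces $h*A'\in\mathcal S$, forcing $h*A'=g$ and $A'=A$; combining this with the case $T\neq\emptyset$ gives the set equality in part 1). The principal obstacle is the uniqueness $|\mathcal S|\le 1$ when $T=\emptyset$, and within it the control of $C\cap f^{-1}_1(\times)$, which rests on the disjointness of the two parts of $g^{-1}(\times)\cap\Bbb Z_{>n-m}$ and, behind the scenes, on the nesting of admissible transpositions from Lemma \ref{prop}.
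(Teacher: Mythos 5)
Your proof is correct and follows essentially the same route as the paper: expand $(ch\,P(f),ch\,E(h))$ along the multiplicity–free Kac flag of $P(f)$, use Theorem \ref{KacEuler} to identify the nonvanishing terms, kill the pairing with the sign–reversing involution of Corollary \ref{prod} in the bad cases, and show that exactly one term survives otherwise. Your packaging via the set $T$ and the explicit determination of $C$ on $f_0^{-1}(\times)$ and $f_1^{-1}(\times)$ is a somewhat more detailed rendering of the paper's uniqueness argument ($A=\tilde A$), but the underlying ideas coincide.
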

\begin{proof}  Let us prove the first statement. We will denote by $\mathcal Q$ the write hand side of the equality (\ref{pq}).   Let $h\in \mathcal E(f)$. We are going to prove that $h\in \mathcal Q$. Let us denote   by $\mathcal A(h)$   the set of all $ g$ such that 
 $$ 
( ch\,P(f),\,ch\,K( g))\ne0,\,\,\,(ch\,K(g),ch\,E(h))\ne0
 $$
 Then we have 
$$
(ch\,P(f),\,ch\,E(h))=\sum_{g\in\mathcal A(h)}(ch\,K(g),ch\,E(h))
$$
Since $h\in \mathcal E(f)$ the set $\mathcal A(h)$ is not empty and  there exists  $g\in \mathcal A(h)$.   By Theorem \ref{KacEuler} we have 
 $$
 g=h*A,A\subset \Bbb Z_{\le n-m}
$$
So we only need to prove that $A\subset f_1^{-1}(\times)$.
If $A=\emptyset$ then $A\subset f_1^{-1}(\times)$.  Let $A\ne\emptyset$ and  $a\in A$.
  There are two cases $a\notin f^{-1}(\times)$ and $a\in f^{-1}(\times)$. 
 
 Let us consider the first case. Let $\tau= \pi_{c}^a$ be the corresponding admissible transposition then $c\notin g^{-1}(\times)$ and therefore  $a\notin h^{-1}(\times)$.
 By Corollary \ref{prod} the set $\mathcal A(h)$ is invariant under the action of $\tau$ and  for any $\tilde g\in \mathcal A(h)$ we have 
 $$
(ch\, K(\tilde g), ch E(h))+(ch\, K(\tau (\tilde g)), ch E(h))=0
$$
Therefore  $(ch\,P(f),\,ch\,E(h))=0$ and the first case is impossible.

Consider the second case $a\in f^{-1}(\times)$. Let $\tau=\pi_a^b$ be the corresponding admissible transposition then $b\notin  g^{-1}(\times)$ and therefore $b\notin h^{-1}(\times)$. We have two possibilities $b\le n-m$ and $b>n-m$. If $b\le n-m$ then in the same way as above we can prove that  $(ch\,P(f),\,ch\,E(h))=0$.  So the only possibility left $b>n-m$.  Therefore $A\subset f^{-1}_1(\times)$ and $\mathcal E(f)\subset\mathcal Q$.

Now let us prove the opposite inclusion. Let $h*A\in \mathcal P(f)$ and $A\subset f^{-1}(\times)\cap \Bbb Z_{\le n-m}$. Suppose   that $h*\tilde A\in \mathcal P(f)$. Clearly $(K(h*A),E(h))\ne 0$. Let  $g\in\mathcal P(f)$ such that $(ch\,K(g), ch\,E(h))\ne0$. Then by Theorem \ref{KacEuler} we have $g=h*\tilde A,\,\tilde A\subset \Bbb Z_{n-m}$. Let $a\in A$ and  $\pi_{a}^b$ be the corresponding admissible transposition. Then one of the elements $a,b$ belongs to $\tilde A$.
Suppose that  $a\notin \tilde A$.  Therefore $b\in \tilde A$  but it is impossible since $b>n-m$.  So $a\in \tilde A$ . Therefore $A=\tilde A$. So we see that
$$
(ch\,P(f), ch\,E(h))=(ch\,K(h*A),ch\,E(h))\ne0
$$
So we proved the inclusion $\mathcal E(f)\supset\mathcal Q$ and the second statement. The Theorem is proved.
 \end{proof}
 
Now we are going to investigate the case of partially polynomial representations in more details. 
\begin{definition}
Let us  denote  by $\mathcal E^{+}(f)$ the set of partially polynomial  diagrams $h$ such that  $(ch\,P(f),ch\,E(h))\ne0$.
\end{definition}
\begin{corollary}\label{cor}  Let $f$ be a partially polynomial diagram then the following equality holds true
$$
\mathcal E^+(f)=\{h\mid h=\pi_C(f)_{>n-m},\,C\subset f^{-1}(\times)\}
$$
\end{corollary}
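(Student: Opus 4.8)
The plan is to use part~$1)$ of Theorem~\ref{Euler1} together with Theorem~\ref{brundan} to convert the claim into a purely combinatorial identity between two families of diagrams, and then to match them by bookkeeping the positions of the $\times$'s. Since $f$ is partially polynomial we have $f^{-1}(\times)\subset\mathbb Z_{\le n-m}$, so $f_1^{-1}(\times)\cap\mathbb Z_{\le n-m}=f_1^{-1}(\times)$, and Theorem~\ref{Euler1}\,$1)$ combined with Theorem~\ref{brundan} reads
$$
\mathcal E^+(f)=\{h\ \text{partially polynomial}\mid h*A=\pi_C(f)\ \text{for some}\ A\subset f_1^{-1}(\times),\ C\subset f^{-1}(\times)\}.
$$
I would use repeatedly the following elementary remarks: an admissible transposition only exchanges a $\times$ with a $\circ$, so $\pi_C$ never moves a $>$ or a $<$; hence every diagram appearing below has the same $>/<$-positions as $f$ and is determined by its set of $\times$'s. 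Also $f$ partially polynomial means $f^{-1}(\times)\cup f^{-1}(<)\subset\mathbb Z_{\le n-m}$; and for $c\in f^{-1}(\times)$ with admissible partner $\pi_c=\pi_c^{b_c}$ one has $c\le n-m$ automatically, while $c\in f_0^{-1}(\times)\iff b_c\le n-m$. The relevant $\times$-sets are $\pi_C(f)^{-1}(\times)=\bigl(f^{-1}(\times)\setminus C\bigr)\cup\{b_c\mid c\in C\}$, $(h*A)^{-1}(\times)=h^{-1}(\times)\cup A$, and $g_{>n-m}$ deletes from $g^{-1}(\times)$ exactly the elements lying in $\mathbb Z_{>n-m}$, the $b_c$'s being pairwise distinct by Lemma~\ref{prop}.

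For the inclusion $\mathcal E^+(f)\subset\{\pi_C(f)_{>n-m}\mid C\subset f^{-1}(\times)\}$, take $h\in\mathcal E^+(f)$ and write $h*A=\pi_C(f)$. From $A\subset\pi_C(f)^{-1}(\times)$, $A\subset f^{-1}(\times)$ and $b_c\in f^{-1}(\circ)$ one gets $A\cap C=\emptyset$, and $h$ is $\pi_C(f)$ with the $\times$'s at $A$ deleted, so $h^{-1}(\times)=\bigl(f^{-1}(\times)\setminus(C\cup A)\bigr)\cup\{b_c\mid c\in C\}$. Because $h$ is partially polynomial this set lies in $\mathbb Z_{\le n-m}$, which forces $b_c\le n-m$ for every $c\in C$, i.e. $C\subset f_0^{-1}(\times)$. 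Setting $C'=C\sqcup A\subset f^{-1}(\times)$ and using that $b_a>n-m$ for $a\in A$ while $b_c\le n-m$ for $c\in C$, the $\times$'s of $\pi_{C'}(f)$ lying in $\mathbb Z_{>n-m}$ are precisely $\{b_a\mid a\in A\}$; deleting them returns $h^{-1}(\times)$, hence $\pi_{C'}(f)_{>n-m}=h$.

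For the reverse inclusion, let $h=\pi_{C'}(f)_{>n-m}$ with $C'\subset f^{-1}(\times)$, and split $C'=C'_0\sqcup C'_1$ according to whether $b_{c'}\le n-m$ or $b_{c'}>n-m$. Then $h^{-1}(\times)=\bigl(f^{-1}(\times)\setminus C'\bigr)\cup\{b_c\mid c\in C'_0\}\subset\mathbb Z_{\le n-m}$, so $h$ is partially polynomial; moreover $C'_1\subset h^{-1}(\circ)$ and a direct computation of $\times$-sets gives $h*C'_1=\pi_{C'_0}(f)\in\mathcal P(f)$. Taking $A=C'_1\subset f_1^{-1}(\times)$ in Theorem~\ref{Euler1}\,$1)$ shows $h\in\mathcal E(f)$, and being partially polynomial, $h\in\mathcal E^+(f)$; if the numerical value is wanted, Theorem~\ref{Euler1}\,$2)$ yields $(ch\,P(f),ch\,E(h))=(ch\,K(\pi_{C'_0}(f)),ch\,E(h))$, computable by Theorem~\ref{KacEuler}. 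The one step that needs care is the middle of the first inclusion: partial polynomiality of $h$ pins the set $C$ down to $f_0^{-1}(\times)$, and this is exactly what reconciles the ``$*A$ then $\pi_C$'' description of $\mathcal E^+(f)$ with the ``$\pi_C$ then $(\,\cdot\,)_{>n-m}$'' one; everything else is routine manipulation of the finite sets $f^{-1}(\times)$, $\{b_c\}$ and $\mathbb Z_{>n-m}$.
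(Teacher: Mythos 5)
Your argument is correct and follows essentially the same route as the paper: both directions reduce to Theorem \ref{Euler1}\,$1)$ and Theorem \ref{brundan} and then come down to splitting $C$ according to whether the admissible partner $b_c$ lies in $\mathbb Z_{\le n-m}$ or $\mathbb Z_{>n-m}$, exactly the decomposition $C=B\sqcup A$ with $B\subset f_0^{-1}(\times)$, $A\subset f_1^{-1}(\times)$ used in the paper. Your explicit bookkeeping of the $\times$-sets in fact makes precise two points the paper passes over silently (why partial polynomiality of $h$ forces $C\subset f_0^{-1}(\times)$, and why $g_A=(\pi_A(g))_{>n-m}$), but this is a matter of presentation, not of method.
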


 \begin{proof} Let us denote the right hand side the above equality by  $\mathcal R$ and by $\mathcal Q^+$. we will denote the set of partially  polynomial diagrams in $\mathcal Q$, where $\mathcal Q$ is the same as in the proof of Theorem \ref{Euler1}. By definition  we have 
 $$
 \mathcal E^+(f)=\mathcal Q^+
 $$
 and we need to prove that $\mathcal R=\mathcal Q^+$. Let $h\in\mathcal Q^+$ then 
by Theorem \ref{Euler1} we have $g=h*A\in \mathcal P(f),\,A\subset f_1^{-1}(\times)\cap \Bbb Z_{\le n-m}$ and since $h,f$ are partially polynomial diagrams  then $g$ is a partially polynomial diagram too. Futher  we have 
$$
h=g_{A}=(\pi_{A}(g))_{> n-m}
$$
Besides since $g\in P(f)$ we have $g=\pi_{B}(f),\, B\subset f_0^{-1}(\times)$. Therefore 
$$
h=(\pi_A\pi_B(f))_{>n-m}=(\tau_C(f))_{> n-m},\,\,C=A\cup B
$$
So $h\in \mathcal R$. Now let us take $h\in \mathcal R$. Then by definition 
$$
h=\pi_C(f)_{>n-m}, C\subset f^{-1}(\times)
$$
Let us set $B=f_0^{-1}(\times)\cap C, A=f_1^{-1}(\times)\cap C$. Then $h*A=\pi_B(f)\in\mathcal P(f).$ Therefore $h\in \mathcal Q^+$ and we proved the Corollary.
\end{proof}

\begin{corollary}\label{irre}  Let  $h$ be  a partially polynomial diagram and 
$$
ch\, E(h)=\sum_{f}b_{f,h}ch\, L(f)
$$ 
be the decomposition of Euler character $E(h)$ in terms of characters of irreducible modules.
 Then $b_{f,h}=0,\pm1$ and it is nonzero if and only if  there exists the sequence of transpositions 
 $$
 \sigma_1=\pi_{c_1}^{d_1},\dots, \sigma_s=\pi_{c_s}^{d_s},\,\sigma_{s+1}=\pi_{c_{s+1}}^{d_{s+1}},\dots, \sigma_r=\pi_{c_r}^{d_r}
 $$
 such that
$$
f=\sigma_{r}\circ\dots\circ\sigma_{s+1}\left(\left(\sigma_{s}\circ\dots\sigma_1(h)\right)*\{d_{s+1},\dots, d_r\}\right)
$$
and

$1)$  $\sigma_i$ is admissible for $h_i=\sigma_i\circ\dots\circ\sigma_1(h)$, $i=1,\dots, s$ 

$2)$  $c_1>c_2>\dots>c_{s}$, $d_1,\dots,d_{s}\le n-m$ and $c_i\ne d_j ,\, 1\le i,j\le s$

$3)$ $c_{s+1}>\dots>c_{r}$, $d_{s+1},\dots,d_{r}> n-m$ 

$4)$ $\sigma_i$ is admissible for  $h_i=\sigma_i(h_{i-1}*\{d_i\})$, $i= s+1,\dots,r$

$5)$ 
$
\{c_1,\dots,c_s\}\cap\{c_{s+1},\dots,c_r\}=\emptyset,\,\,\, \{c_1,\dots,c_r\}\cap\{d_1,\dots,d_s\}=\emptyset
$

 \end{corollary}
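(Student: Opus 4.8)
Reading off the coefficient of $ch\,L(f)$ in the expansion $ch\,E(h)=\sum_{f}(ch\,P(f),ch\,E(h))\,ch\,L(f)$ valid in $\Lambda^{\pm}_{m,n}$ gives $b_{f,h}=(ch\,P(f),ch\,E(h))$, so $b_{f,h}\ne0$ is by definition the same as $h\in\mathcal E(f)$, which for partially polynomial $h$ means $h\in\mathcal E^+(f)$; a direct check using Theorem \ref{Euler1}(1) and Theorem \ref{brundan} shows that only partially polynomial $f$ can occur, so we may assume $f$ partially polynomial. The bound $b_{f,h}\in\{0,\pm1\}$ is then immediate: when $h\in\mathcal E(f)$, Theorem \ref{Euler1}(2) identifies $b_{f,h}$ with the single pairing $(ch\,K(h*A),ch\,E(h))$ where $A\subset\Bbb Z_{\le n-m}$, and since $h*A$ has the form $h*C$ with $C\subset h^{-1}(\circ)\cap\Bbb Z_{\le n-m}$, Theorem \ref{KacEuler} gives $b_{f,h}=\varepsilon(h*A,h)\in\{\pm1\}$. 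So everything reduces to translating the condition $h\in\mathcal E^+(f)$ into the stated combinatorial form.

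For partially polynomial $f$, Corollary \ref{cor} says $h\in\mathcal E^+(f)$ if and only if $h=\pi_C(f)_{>n-m}$ for some $C\subset f^{-1}(\times)$, and I would take this as the starting point. Split $C=C_0\sqcup C_1$, assigning a cross $c\in C$ to $C_0$ or $C_1$ according as its unique admissible partner in $f$ lies at a position $\le n-m$ or $>n-m$. Since $f$ is partially polynomial it has no crosses above $n-m$ and the product $\pi_{C_0}$ creates none, so, using Lemma \ref{prop}, the crosses erased by the truncation $(\cdot)_{>n-m}$ are exactly the targets of the $C_1$-transpositions; this forces $h$ to be obtained from $\pi_{C_0}(f)$ by deleting the crosses at $C_1$, equivalently $f=\pi_{C_0}(h)*C_1$. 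Thus $h\in\mathcal E^+(f)$ says: $f$ is built from $h$ by first performing the admissible product $\pi_{C_0}$ (all partners $\le n-m$), reaching $f_{C_1}$, and then re-inserting crosses at the positions of $C_1$, each admissibly paired in $f$ with some position $>n-m$. The first step is the block $\sigma_1,\dots,\sigma_s$: conditions (1)--(2) are precisely the hypotheses under which Corollary \ref{irr} (together with Corollary \ref{adm1} for the converse) resolves $\pi_{C_0}$ into a $c$-decreasing sequence of single admissible transpositions applied to $h$, all with partners $d_i\le n-m$. The second step is the block $\sigma_{s+1},\dots,\sigma_r$: for an admissible $\sigma_i=\pi_{c_i}^{d_i}$ with $d_i>n-m$ one computes $\sigma_i(h_{i-1}*\{d_i\})=h_{i-1}*\{c_i\}$, so inserting a cross at $d_i>n-m$ and sliding it down is exactly the operation of condition (4), and processing the crosses of $C_1$ in the order $c_{s+1}>\dots>c_r$ keeps each $\sigma_i$ admissible for the running diagram by Lemma \ref{adm} and Corollary \ref{admis}. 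The disjointness relations (5) are then just $C_0\cap C_1=\emptyset$ together with the separation of the $c$'s from the partners used in the first block. Matching the two descriptions in both directions gives the Corollary.

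The hard part is this matching. One cannot merely transport the admissible transpositions of $f$ onto the diagrams occurring on the $h$-side, since a cross $c\in C_0$ need not be a cross of $h$ at all, and both the truncation $(\cdot)_{>n-m}$ and the insertions $*\{d_i\}$ change which transpositions are admissible for the intermediate diagrams; the two blocks must also be carried out in the right order. All of this is governed by the reordering results Lemma \ref{prop}, Lemma \ref{adm}, Corollary \ref{adm1} and Corollary \ref{admis}, and the delicate point is to apply them to the running diagrams $h_i$ so that exactly conditions (1)--(5) appear, with nothing spurious and nothing missing.
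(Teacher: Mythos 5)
Your strategy coincides with the paper's: reduce via Corollary \ref{cor} to the description $h=\pi_C(f)_{>n-m}$, split $C$ into the crosses whose admissible partners lie in $\Bbb Z_{\le n-m}$ and those whose partners lie above $n-m$, and control admissibility of the running diagrams with Lemma \ref{adm} and Corollaries \ref{adm1}, \ref{admis}. The identification $b_{f,h}=(ch\,P(f),ch\,E(h))$ and the bound $b_{f,h}\in\{0,\pm1\}$ via Theorem \ref{Euler1}(2) and Theorem \ref{KacEuler} are correct, and your observation that $\sigma_i(h_{i-1}*\{d_i\})=h_{i-1}*\{c_i\}$ is exactly the right reading of condition (4). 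However, the step you explicitly set aside as ``the hard part'' --- matching the two descriptions in both directions --- is where all the work of the proof actually lives, and as written your argument does not close: you never verify that the transpositions of the two blocks, assembled into a single composition, satisfy the interleaving hypotheses needed to move admissibility back and forth between $f$ and the intermediate diagrams.

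The paper closes this by transferring everything to the $f$-side. Since $d_1,\dots,d_s\le n-m$ while $d_{s+1},\dots,d_r>n-m$, the insertion $*\{d_{s+1},\dots,d_r\}$ commutes with the first block of transpositions (whose supports $[c_i,d_i]$ lie entirely in $\Bbb Z_{\le n-m}$), so the defining relation collapses to the single identity $h*\{d_{s+1},\dots,d_r\}=\tau_r\circ\dots\circ\tau_1(f)$ with $\tau_i=\sigma_{r-i+1}$. Now one only has to check the non-crossing conditions (\ref{cond}) for this composition: for $i>j$ the forbidden inclusion $[a_j,b_j]\subset(a_i,b_i)$ is ruled out by the orderings of the $c$'s inside each block (conditions (2) and (3)) when $i,j$ lie in the same block, and by the dichotomy $b\le n-m$ versus $b>n-m$ when they lie in different blocks. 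Once (\ref{cond}) holds, Lemma \ref{adm} makes ``admissible for $f_i$'' equivalent to ``admissible for $f$'', and Corollaries \ref{adm1} and \ref{admis} convert this into admissibility for the $h$-side diagrams $h_i$, in both directions of the equivalence. This two-case verification is short, but it is precisely the gap in your proposal: without it, conditions (1)--(5) are neither shown sufficient nor necessary.
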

\begin{proof}   Suppose that all conditions of the Corollary are fulfilled.  Then
$$
f=\sigma_{r}\circ\dots\circ\sigma_{s+1}\left(\left(\sigma_{s}\circ\dots\sigma_1(h)\right)*\{d_{s+1},\dots, d_r\}\right)
$$
Since $d_1,\dots,d_{s}\le n-m$ and $d_{s+1},\dots,d_{r}> n-m$ we can rewrite the above equality in the form
$$
f=\sigma_{r}\circ\dots\circ\sigma_{s+1}\circ\sigma_{s}\circ\dots\sigma_1(h*\{d_{s+1},\dots, d_r\})
$$
Now we are going to prove that $\sigma_1,\dots,\sigma_r$ are admissible for $f$. Let us set 
$$
\tau_i=\sigma_{r-i+1}, a_i=c_{r-i+1},\,b_i =d_{r-i+1}\,\,\,1\le i\le r
$$
Then we have 
$$
h*\{b_{r-s},\dots,b_1\}=\left(\tau_r\circ\dots\circ\tau_{r-s+1}\circ\tau_{r-s}\circ\dots\circ\tau_1(f)\right)
$$
 Again from the conditions of the Lemma it follows that $\tau_i,\, i=1,\dots,r$ is admissible for $f_i=\tau_{i-1}\circ\dots\circ\tau_1(f)$
We have  $a_{r-s+1}<a_{r-s+2}<\dots<a_r$ and by our assumptions $a_{r-s+i}\ne b_{r-s+j},\,1\le1,j\le s$. Therefore by Corollary \ref{adm1} $\tau_{r-s+1},\dots,\tau_r$ are admissible for $f_{r-s+1}$. Further again by our assumptions $a_1<\dots<a_{r-s}$ and since $b_1,\dots,b_{r-s}>n-m$ we have $b_1>\dots>b_{r-s}$  Let us take  $ r-s+1\le j\le r$. Then $b_j<b_{r-s}$. Therefore by Corollary \ref{admis} $\tau_1,\dots,\tau_{r-s+1},\tau_j$ are admissible for $f$.

Now let us suppose that $(P(f),E(h))\ne0$. Therefore by  the proof of Corollary \ref{cor}  
$$
h=(\pi_{A_1}\pi_{A_0}(f))_{>n-m},\, A_1\subset f_1^{-1}(\times),\,A_0\subset f_0^{-1}(\times)
$$
Let $A_0=\{a_1,a_2,\dots, a_{r-s}\}$ where $a_1<a_2<\dots<a_{r-s}$ and  also \\ $A_1=\{a_{r-s+1},\dots, a_{r}\}$ where $a_{r-s+1}<\dots<a_{r}$.  Since admissible transpositions pairwise commute we have 
$$
h*\{b_{r-s},\dots,b_1\}=\tau_r\circ\tau_2\circ\dots\circ\tau_1(f)
$$
where $\tau_i=\pi_{a_i}^{b_i}$. Let us check that  conditions (\ref{cond}) are fulfilled. It is enough to verify  that  inclusion $[a_j,b_j]\subset (a_i,b_i)$ is impossible if $i>j$. Indeed if $i\le r-s$ or $j>r-s$ then  $a_j>a_i$ and we get a contradiction. If $i>r-s,\, j\le r-s$ then  $b_i \le n-m$ and $b_j>n-m$ and this is again a contradiction. Therefore conditions  $(\ref{cond})$  are fulfilled and by Theorem \ref{adm} $\tau_i$ is admissible for $f_i=\tau_{i-1}\circ\circ\dots\circ\tau_1(f)$ and we can set
$$
\sigma_{i}=\tau_{r-i+1},\,c_i=a_{r-i+1},\,d_i=b_{r-i+1}\,\,\,i=1,\dots,r.
$$

\end{proof}

\begin{example}  
 Let  $n=m=2$ and $h^{-1}(\times)=-1,\, h^{-1}(<)=h^{-1}(>)=\emptyset$. We are going to describe the set $\mathcal E^+(h).$

In this case  there are two possibilities for the first step.

$1)$   We are going to find a transpositions $\pi_{a}^b$ such that $b\in h^{-1}(\times),$ and $\pi_a^b$ is admissible for $\pi_{a}^b(h).$ And it is easy to see  that  there exists only one such transposition $\pi_{-2}^{-1}$.

$2)$
We also need to find a transposition $\pi_{a}^b$ such that $\pi_a^b$ is admissible for $\pi_{a}^b(h*\{b\})$ and $b>0.$ It is also easy to check that there exist two such transpositions $\pi_0^1,\pi_{-2}^1$.

In the second step  there  are  also two possibilities. 

$1)$ We are going to find a transpositions $\pi_{a}^b$ such that $b\in (\pi_{-2}^{-1}h)^{-1}(\times)$ and $\pi_a^b$ is admissible for $\pi_{a}^b\circ\pi_{-2}^{-1}(h).$ And it is easy to see  that  there is no  such a  transposition which satisfies conditions $1),2),3)$ of Corollary \ref{irre}.

$2)$ We are going to find a transpositions $\pi_{a}^b$ such that  $\pi_a^b$ is admissible for $\pi_{a}^b\circ\pi_{-2}^{-1}(h*\{b\})$ and $b>0$. And it is easy to see  that  there is only one such transposition $\pi_0^1.$

So we have 
$$
\mathcal E^+(h)=\{\pi_0^1((\pi_{-2}^{-1}(h))*\{1\}),\,\,\pi_0^1(h*{1}),\,\,\pi_{-2}^1(h*{1})\}
$$
And it is easy to see that 
$$
E(h)=-L(\pi_0^1((\pi_{-2}^{-1}(h))*\{1\}))-L(\pi_0^1(h*{1}),)-L(\pi_{-2}^1(h*{1}))
$$

\end{example}

\section{Some special classes of irreducible modules}

In this section  we  will only consider diagrams of the form $f=(A,A)$  where $A\subset \Bbb Z_{\le 0}$ and instead of $E(f)$ we will write $E(A)$  for Euler virtual module   and $L(A),\,P(A)$   for irreducible module and for projective indecomposable module correspondently. Below $|A|$ means the number of elements in the set $A$. Our aim in this section is to give an explicit formula for characters of irreducible modules for the most atypical block of Lie superalgebra $\frak{gl}(2,2)$.

\begin{definition} Let us set
$$
\mathcal P_n^{(m)}=\{A\subset \Bbb Z_{\le 0}, |A|=n\mid \exists\ B\subset\Bbb Z_{\le 0},\, |B|\le m, \,(ch\,P(A),ch\,E(B))\ne0 \}
$$
and  denote by $\omega$ the following shift 
$$
\omega: \Bbb Z\rightarrow \Bbb Z,\,\,\, \omega(x)=x-1
$$
\end{definition}
 In the following Lemma we give an inductive description of the set  $\mathcal P_n^{(m)}$.
\begin{lemma} \label{ind}The following formulae hold true for $m<n$
$$
\mathcal P_{n}^{(m)}=\bigcup_{i=0}^{m}\mathcal P_{n,i}^{(m)},\,\quad\mathcal P_{n,i}^{(m)}=\left\{\{-i, C\}\mid C\in \omega^{i+1}( \mathcal P_{n-1}^{(m-i)})\right\}
$$
\end{lemma}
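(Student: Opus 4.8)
The plan is to prove the identity by induction on $n$ (over all $m\ge0$ simultaneously), after rewriting membership in $\mathcal P_n^{(m)}$ as a condition about the bracket matching of the diagram $f=(A,A)$. Since the superalgebra considered here has equal ranks, every diagram $(B,B)$ with $B\subset\Bbb Z_{\le0}$ is partially polynomial, so Corollary \ref{cor} applies to $f$ and reads $\mathcal E^+(f)=\{\pi_C(f)_{>0}\mid C\subset A\}$. Here $\pi_C$ sends each cross $c\in C$ to its unique admissible partner $b(c)$ (these exist for every $c\in A$ because $A$ is finite), and $\pi_C(f)_{>0}$ deletes the crosses that $\pi_C$ has pushed into $\Bbb Z_{>0}$; thus $\pi_C(f)_{>0}$ is again of the form $(B,B)$ with $B=(A\setminus C)\cup\{b(c)\mid c\in C,\ b(c)\le0\}$, whence $|B|=n-|C\cap f_1^{-1}(\times)|$, where $f_1^{-1}(\times)$ is the set of crosses whose partner lies in $\Bbb Z_{>0}$. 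As $\mathcal P_n^{(m)}$ asks exactly for such a $B$ with $|B|\le m$, minimising over $C$ (take $C=f_1^{-1}(\times)$) gives
$$
A\in\mathcal P_n^{(m)}\iff |f_1^{-1}(\times)|\ge n-m .
$$

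Next I would evaluate $\nu(A):=|f_1^{-1}(\times)|$. Running through $(A,A)$ from left to right with a stack ($\times$ pushes, $\circ$ pops when possible), let $h(A)$ be the stack height just after position $\max A$; since all positions to the right of $\max A$ carry $\circ$, the $h(A)$ crosses still on the stack are matched (in LIFO order) to the circles at $\max A+1,\dots,\max A+h(A)$, while every cross matched earlier is matched inside $\Bbb Z_{\le\max A-1}\subset\Bbb Z_{<0}$. Hence
$$
\nu(A)=\max\bigl(h(A)+\max A,\ 0\bigr),\qquad \nu(\emptyset)=0 ,
$$
and, because $0\le\nu(A)\le|A|$, the previous equivalence holds for every $m\ge0$ in the form $A\in\mathcal P_n^{(m)}\iff\nu(A)\ge n-m$ (for $m\ge n$ both sides hold automatically).

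The induction step is carried by a single recursion for $\nu$. Write $A=\{-i\}\sqcup C$ with $i=-\max A\ge0$ and $C\subset\Bbb Z_{\le -i-1}$, and put $A'=\omega^{-(i+1)}(C)=C+(i+1)\subset\Bbb Z_{\le0}$, so $|A'|=n-1$. The diagram $(C,C)$ is a translate of $(A',A')$, and exactly $-\max A'$ circles separate $\max C$ from $-i$; running the stack through those circles and then pushing at $-i$ gives $h(A)=\max(h(A')+\max A',0)+1=\nu(A')+1$, so
$$
\nu(A)=\max\bigl(\nu(A')+1-i,\ 0\bigr).
$$
For $m<n$ we have $n-m\ge1$, hence $A\in\mathcal P_n^{(m)}$ iff $\nu(A')+1-i\ge n-m$, i.e. iff $\nu(A')\ge(n-1)-(m-i)$; since $\nu(A')\le n-1$ this already forces $i\le m$, and it is then equivalent to $A'\in\mathcal P_{n-1}^{(m-i)}$, which is precisely the condition $A\in\mathcal P_{n,i}^{(m)}$ (with $\max A=-i$). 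Conversely, if $i>m$ then $\nu(A')+1-i\le n-m-1<n-m$, so $A\notin\mathcal P_n^{(m)}$; this is why the union runs only over $0\le i\le m$. Together with the trivial base $\mathcal P_0^{(m)}=\{\emptyset\}$, this yields $\mathcal P_n^{(m)}=\bigcup_{i=0}^m\mathcal P_{n,i}^{(m)}$.

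The step I expect to be most delicate is the bookkeeping around the truncation $\max(\,\cdot\,,0)$: both $\nu(A)=\max(h(A)+\max A,0)$ and the recursion $\nu(A)=\max(\nu(A')+1-i,0)$ rest on pinning down exactly which crosses of $\pi_C(f)$ are moved past position $0$, and the inductive step passes from the strict regime $m<n$ to the possibly non-strict pair $(n-1,m-i)$ — which is exactly the reason for first extending the characterisation to all $m\ge0$. A secondary point is to confirm that restricting attention to partially polynomial $h$ loses nothing, so that Corollary \ref{cor} may be invoked; this holds because every $(B,B)$ with $B\subset\Bbb Z_{\le0}$ is partially polynomial.
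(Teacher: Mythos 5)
Your proposal is correct and takes essentially the same route as the paper: the paper also decomposes $\mathcal P_n^{(m)}$ according to $\max A=-i$ and rests on the characterisation of $\mathcal P_n^{(m)}$ as the sets having at least $n-m$ crosses whose admissible partner is positive, reducing to $\mathcal P_{n-1}^{(m-i)}$ after removing the maximal element. Your explicit stack-height invariant $\nu(A)$ and the recursion $\nu(A)=\max(\nu(A')+1-i,0)$ simply supply the details the paper compresses into ``the same arguments work for any $0<i\le m$''.
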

\begin{proof} It is easy to see that
$$
\mathcal P_{n}^{(m)}=\bigcup_{i=0}^{m}\mathcal A_i,\quad  \mathcal A_i=\{A\in \mathcal P_{n}^{(m)}\mid \max_{a\in A}a=i\}
$$
and we only need to show that $\mathcal A_i=\mathcal P_{n,i}^{(m)}$.

First let us note that $A\in \mathcal P_{n}^{(m)}$ if and only if there exist at least $n-m$ elements from $A$ such that  every  corresponding admissible transposition has one positive element.  If in addition $A$ contains $0$ then transposition $\pi_0^1$ is admissible for $A$. Therefore for  the set $A\setminus \{0\}$ there must be at least $n-m-1$ admissible transpositions with one positive element. This proves the equality $\mathcal A_0=\mathcal P_{n,0}^{(m)}$. The same arguments work for any $0<i\le m$. Lemma is proved.
\end{proof}

 In the case $m=1$ we can give an explicit description of the set $\mathcal P_n^{(m)}$.
\begin{lemma} We have $\mathcal{P}^{(1)}_{n}= S_1\cup S_2$ where 
$$
S_1=\{ \{0,-1,-2,\dots,2-n, a\}\mid a\le 1-n\},\,
$$
$$
 S_2=\{\{0,-1,-2,\dots,-n\}\setminus\{b\}\mid b=0,-1,\dots, 1-n,-n\}
$$
\end{lemma}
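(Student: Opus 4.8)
The plan is to prove the formula by induction on $n$, using the case $m=1$ of Lemma \ref{ind}, after first pinning down the sets $\mathcal P_k^{(0)}$. I claim that $\mathcal P_0^{(0)}=\{\emptyset\}$ and $\mathcal P_k^{(0)}=\{\{0,-1,\dots,1-k\}\}$ for every $k\ge1$; that is, the only diagram $(A,A)$ with $|A|=k$ that pairs non-trivially with the ``empty'' Euler character $E(\emptyset)$ is the length-$k$ descending staircase. The first equality is immediate from the definition (the trivial block, where $P(\emptyset)=E(\emptyset)$ and the pairing is $1$), and the second follows by a one-step induction from Lemma \ref{ind} applied with $m=0<k$, which reads $\mathcal P_k^{(0)}=\mathcal P_{k,0}^{(0)}=\{\{0\}\cup C\mid C\in\omega(\mathcal P_{k-1}^{(0)})\}$. (Equivalently, using Theorems \ref{KacEuler} and \ref{Euler1}, $A\in\mathcal P_k^{(0)}$ precisely when every admissible transposition $\pi_a^b$ with $a\in A$ has $b>0$, and for a diagram all of whose entries are $\times$ or $\circ$ this holds exactly when $A$ contains $0$ and has no gaps below it.)

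For the base case $n=1$, the definition gives directly that $\mathcal P_1^{(1)}$ is the set of all singletons $\{a\}$ with $a\le0$: one takes $B=\{a\}$, and $(ch\,P(\{a\}),ch\,E(\{a\}))\ne0$ because $E(\{a\})=K(\{a\})$ in the rank-one case, $K(\{a\})$ occurs in the Kac flag of $P(\{a\})$, and by Theorem \ref{KacEuler} (with $C=\emptyset$) the pairing $(ch\,K(\{a\}),ch\,E(\{a\}))=\pm1$ while the remaining Kac modules of that flag pair to zero with $E(\{a\})$. With the convention that the staircase $\{0,-1,\dots,2-n\}$ is empty when $n=1$, one has $S_1=\{\{a\}\mid a\le0\}$ and $S_2=\{\{0\},\{-1\}\}\subseteq S_1$, so $S_1\cup S_2=\mathcal P_1^{(1)}$.

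For the inductive step, fix $n\ge2$ and assume $\mathcal P_{n-1}^{(1)}=S_1'\cup S_2'$, where $S_1',S_2'$ are the sets of the statement with $n$ replaced by $n-1$. Lemma \ref{ind} with $m=1$ gives $\mathcal P_n^{(1)}=\mathcal P_{n,0}^{(1)}\cup\mathcal P_{n,1}^{(1)}$, where $\mathcal P_{n,0}^{(1)}=\{\{0\}\cup C\mid C\in\omega(\mathcal P_{n-1}^{(1)})\}$ and $\mathcal P_{n,1}^{(1)}=\{\{-1\}\cup C\mid C\in\omega^{2}(\mathcal P_{n-1}^{(0)})\}$. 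By the preliminary, $\mathcal P_{n-1}^{(0)}=\{\{0,-1,\dots,2-n\}\}$, so $\mathcal P_{n,1}^{(1)}=\{\{-1,-2,\dots,-n\}\}$, which is exactly the member of $S_2$ obtained by deleting $b=0$. Adjoining $0$ to $\omega(S_1')$ turns $\{0,-1,\dots,3-n,a\}$ with $a\le2-n$ into $\{0,-1,\dots,2-n,a-1\}$ with $a-1\le1-n$, reproducing $S_1$ exactly; and adjoining $0$ to $\omega(S_2')$ produces $\{\{0,-1,\dots,-n\}\setminus\{b\}\mid b\in\{-1,\dots,-n\}\}$, i.e. all members of $S_2$ other than the one with $b=0$. (In every $\{-i\}\cup C$ above one indeed has $-i\notin C$, since $\omega$ and $\omega^2$ move the relevant sets into $\Bbb Z_{\le-1}$ and $\Bbb Z_{\le-2}$.) Hence $\mathcal P_{n,0}^{(1)}=S_1\cup(S_2\setminus\{b=0\})$, and combining with $\mathcal P_{n,1}^{(1)}$ gives $\mathcal P_n^{(1)}=S_1\cup S_2$, closing the induction.

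Almost everything here is routine bookkeeping with the shift $\omega$, plus care over the edge conventions (empty staircases, the value $\mathcal P_0^{(0)}=\{\emptyset\}$) at small $n$; the only piece of real content beyond Lemma \ref{ind} is the identification of $\mathcal P_k^{(0)}$ with the staircase, which is itself a one-line consequence of that lemma. The point I expect to need the most attention is matching the two descriptions of $S_2$ — the slice ``$b=0$'' arising from the $i=1$ term $\mathcal P_{n,1}^{(1)}$ and the slices ``$b\le-1$'' from the $i=0$ term $\mathcal P_{n,0}^{(1)}$ — so that $S_1\cup S_2$ is covered exactly, with neither omissions nor spurious members (note that $S_1$ and $S_2$ genuinely overlap but neither contains the other once $n\ge3$).
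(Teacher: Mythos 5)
Your proof is correct and follows exactly the route the paper intends: the paper's own proof of this lemma is the one-line remark ``Use of Lemma \ref{ind} and induction on $n$,'' and your argument simply supplies the details (the identification $\mathcal P_k^{(0)}=\{\{0,-1,\dots,1-k\}\}$, the base case $n=1$ handled directly since Lemma \ref{ind} requires $m<n$, and the bookkeeping with $\omega$ in the inductive step), all of which check out.
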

\begin{proof} Use of Lemma \ref{ind} and 
induction on $n$.
\end{proof}

The following Lemma gives the value of our bilinear form on some pairs of projective modules and Euler characters.

\begin{lemma}\label{rank1}Let $A_0=\{0,-1,-2,\dots,1-n,\,-n\}$.  Then we have 
$$
\mathcal E^+(A_0\setminus\{-n\})\cap\mathcal{P}^{(1)}_{n}=\{E(\emptyset), E(0),\dots,E(1-n)\},\quad (1,1,-1\dots,(-1)^{n-1})
$$
$$
\mathcal E^+(A_0\setminus\{b\}))\cap\mathcal{P}^{(1)}_{n}=\{E(b),\,E(b-1)\},\quad ((-1)^{n-1},(-1)^{n-1}),\,\, b=0,\dots, 1-n
$$
$$
\mathcal E^+(0,-1,\dots,2-n,b)\cap\mathcal P^{(1)}_n=\{E(b+1),\,E(b)\},\, ((-1)^{n-1},\,(-1)^{n-1}),\,b\le -n
$$
we also  indicate  on the right the corresponding value $(ch\,P(A), ch\,E(B))$.
\end{lemma}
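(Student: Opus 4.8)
The plan is to reduce each pairing $(ch\,P(A),ch\,E(B))$ to an explicit Kac--Euler pairing and then evaluate it, exploiting that each of the three diagrams appearing in the statement (which by the preceding lemma are exactly the members of $\mathcal P^{(1)}_n$) is an interval of $\Bbb Z_{\le0}$ with at most one defect, so that every admissible transposition is completely explicit. Throughout we are in the most atypical block of $\frak{gl}(n,n)$, so $n-m=0$, $\Bbb Z_{\le n-m}=\Bbb Z_{\le0}$ and $f_{>n-m}=f_{>0}$.

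\emph{Step 1 (the left-hand sides).} For each shape one reads off from Definition \ref{defadm} the admissible transpositions $\pi_a^b$ with $a$ a cross: for $A_0\setminus\{-n\}=[1-n,0]$ the one with left end $-k$ is $\pi_{-k}^{k+1}$, so no admissible move stays $\le0$, i.e.\ $f^{-1}_0(\times)=\emptyset$ and $f^{-1}_1(\times)=[1-n,0]$; for $A_0\setminus\{b\}=[-n,0]\setminus\{b\}$ the unique move staying $\le 0$ is the ``hole-filling'' $\pi_{b-1}^{b}$, so $f^{-1}_0(\times)=\{b-1\}$; for $\{0,\dots,2-n,b\}$ with $b\le-n$ it is $\pi_b^{b+1}$, so $f^{-1}_0(\times)=\{b\}$. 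Now Corollary \ref{cor} gives $\mathcal E^+(A)=\{\pi_C(A)_{>0}\mid C\subset A\}$: in the interval case $\pi_C(A)_{>0}$ has crosses exactly $A\setminus C$, so $\mathcal E^+(A)$ consists of all $E(S)$ with $S\subset[1-n,0]$, of which $E(\emptyset),E(0),\dots,E(1-n)$ are those with at most one cross; in the other two cases the same computation, now carrying along the single hole-filling move (which keeps a cross at a non-positive position instead of deleting it), leaves exactly $E(b),E(b-1)$, resp.\ $E(b+1),E(b)$, among the diagrams with at most one cross. This reproduces the three left-hand sides.

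\emph{Step 2 (the values).} By Theorem \ref{Euler1}(2), $(ch\,P(A),ch\,E(h))=(ch\,K(h*A'),ch\,E(h))$ for the unique $A'\subset f^{-1}_1(\times)$ with $h*A'\in\mathcal P(A)$; since $h*A'$ has all crosses $\le0$, the Kac diagram $h*A'$ must be $A$ itself or its single hole-filled translate, with $A'=A\setminus h^{-1}(\times)$. Theorem \ref{KacEuler} then evaluates $(ch\,K(h*A'),ch\,E(h))=\varepsilon(h*A',h)=\pm1$, the exponent being $\frac12 r(r-1)+\frac12 m(m-1)+s(m-r)+S(A')$ with $m=n$, $r=s=|h^{-1}(\times)|\in\{0,1\}$, multiplied by the two shuffle signs, which Lemma \ref{sign} reads off from the position of $h^{-1}(\times)$ relative to $A'$. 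For $A_0\setminus\{-n\}$ and $h=E(-k)$ one has $S(A')=S([1-n,0])+k$, the $\frac12 m(m-1)$ term cancels the contribution $S([1-n,0])=-\frac12 n(n-1)$ of $S(A')$, and the product of the two shuffle signs is $(-1)^{n-1}$, so the value is $(-1)^{k}$ (and $1$ for $h=E(\emptyset)$), giving $(1,1,-1,\dots,(-1)^{n-1})$. For the other two families a parallel computation shows that the dependence on $b$ in $S(A')$ cancels exactly against that in the two shuffle signs, so every entry equals $(-1)^{n-1}$.

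The combinatorics of Step 1 and the identification of the Kac diagram $h*A'$ are routine for these interval-like diagrams; the one delicate point is the sign bookkeeping in Step 2 — matching the intrinsic sign $\varepsilon(\cdot,\cdot)$ of Theorem \ref{KacEuler}, with its $\rho$-shift contributions $\frac12 r(r-1)$, $\frac12 m(m-1)$, $s(m-r)$ and $S(C)$, against the claimed alternating patterns, and in particular checking that the $b$-dependence genuinely cancels in the last two families. One could instead avoid the closed formula: fix one base value in each family directly and propagate along the admissible moves fixing $E(h)$ via Corollary \ref{prev}, whose sign change is $(-1)^{n_{ab}+m_{ab}+a-b}$; either way this is a finite elementary verification once Step 1 is in hand.
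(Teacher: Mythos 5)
Your proposal is correct and follows exactly the route the paper intends: the paper's entire proof is the one line ``It easily follows from Theorem \ref{Euler1}'', and your Step 1 (Corollary \ref{cor} applied to these interval-with-one-defect diagrams, reading off $f_0^{-1}(\times)$ and $f_1^{-1}(\times)$) together with Step 2 (Theorem \ref{Euler1}(2) reducing to a single Kac--Euler pairing, evaluated by Theorem \ref{KacEuler} and Lemma \ref{sign}) is precisely the computation being invoked; I checked the sign bookkeeping in all three families and it comes out as you claim ($(-1)^k$ for $E(-k)$ in the first family, $(-1)^{n-1}$ throughout the other two).
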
 
\begin{proof} It easily follows from Theorem \ref{Euler1}.
\end{proof}
\begin{corollary}\label{graph} The following formulae hold true

$1)$
$$
ch\,L(A_0\setminus\{-n\})=ch\,E(\emptyset),\,\, 
$$

$2)$ if 
 $b\in [1-n,0]$ then
$$
ch\,L(A_0\setminus\{b\})=(-1)^{n-1}[ch\,E(b)-ch\,E(b+1)+\dots +
$$
$$
(-1)^{b}ch\,E(0)+(-1)^{b+1}(1-b)ch\,E(\emptyset)]
$$

$3)$ if $b\le -n$ then 

$$
ch\,L(0,-1,\dots, 2-n,b)=
(-1)^{n-1}[ch\,E(b+1)-\dots+
$$
$$
(-1)^{b+1} ch\,E(0)+(-1)^{b}n\, ch\,E(\emptyset)] 
$$
\end{corollary}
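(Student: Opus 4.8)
The plan is to exploit the biorthogonality contained in the identity $ch\,V=\sum_{P}(ch\,P,ch\,V)\,ch\,L_{P}$ recalled at the beginning of this section: since $\{ch\,L(f)\}$ is a basis of $K(\mathcal F)=\Lambda^{\pm}_{m,n}$ and, by Theorem \ref{form}, $(ch\,P(f'),ch\,L(f))=\dim Hom_{\frak g}(P(f'),L(f))=\delta_{f,f'}$ (because $P(f')$ is the projective cover of $L(f')$), an element $R\in K(\mathcal F)$ equals $ch\,L(f_{0})$ as soon as $(ch\,P(f'),R)=\delta_{f',f_{0}}$ for every projective cover $P(f')$. So for each of the three asserted identities I would denote its right hand side by $R$ — a fixed $\Bbb Z$-combination of characters $ch\,E(B)$ with $B\subset\Bbb Z_{\le 0}$ and $|B|\le 1$, all of which lie in $K(\mathcal F)$ — and reduce the claim to computing $(ch\,P(f'),R)=\sum_{B}d_{B}\,(ch\,P(f'),ch\,E(B))$ for all $f'$; in other words, the Corollary is a direct inversion of the matrix of pairings recorded in Lemma \ref{rank1}.

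The first step is to cut down the set of relevant $f'$. The summand $(ch\,P(f'),ch\,E(B))$ vanishes unless $f'$ lies in the block of $E(B)$, the most atypical block of $\frak{gl}(n,n)$, so that $f'=(A',A')$ with $|A'|=n$; I claim it then forces $A'\subset\Bbb Z_{\le 0}$, hence $f'\in\mathcal P^{(1)}_{n}$. Indeed, by Theorem \ref{Euler1} a nonzero pairing produces an Euler diagram $h$ with $h^{-1}(\times)\subset\Bbb Z_{\le 0}$ and a set $A\subset f_{1}^{-1}(\times)\cap\Bbb Z_{\le 0}$ with $h*A\in\mathcal P(f')$; writing $h*A=\pi_{C}(f')$ with $C\subset (f')^{-1}(\times)$ admissible, the crosses of $f'$ in $C$ move strictly to the right and the remaining crosses stay fixed, so there is a bijection $(f')^{-1}(\times)\to(h*A)^{-1}(\times)=h^{-1}(\times)\cup A\subset\Bbb Z_{\le 0}$ under which no cross decreases; hence $(f')^{-1}(\times)\subset\Bbb Z_{\le 0}$.

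For $f'\in\mathcal P^{(1)}_{n}$, Lemma \ref{rank1} describes, for each of the three families that exhaust $\mathcal P^{(1)}_{n}$ (the diagram $A_{0}\setminus\{-n\}$; the diagrams $A_{0}\setminus\{b\}$ with $1-n\le b\le 0$; and the diagrams $\{0,-1,\dots,2-n,a\}$ with $a\le -n$), exactly which $E(B)$ with $|B|\le 1$ it pairs nontrivially with, together with the value. Substituting this into $(ch\,P(f'),R)=\sum_{B}d_{B}\,(ch\,P(f'),ch\,E(B))$ reduces each case to a short finite sum. Part (1) is immediate: among the diagrams of $\mathcal P^{(1)}_{n}$, only $P(A_{0}\setminus\{-n\})$ pairs nontrivially with $E(\emptyset)$, with value $1$. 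In parts (2) and (3) every diagram of the "pair of consecutive Euler characters" type contributes two terms with opposite signs, which telescope to $0$, except for the one distinguished diagram, which contributes a single term reproducing $\delta$; and the prefactor $(-1)^{n-1}$ together with the coefficient $(1-b)$, resp. $n$, attached to $ch\,E(\emptyset)$ is exactly what makes the contribution of $P(A_{0}\setminus\{-n\})$ — whose pairings with $E(\emptyset),E(0),\dots,E(1-n)$ are $1,1,-1,\dots,(-1)^{n-1}$ — cancel. The endpoints $b=0,\ 1-n,\ -n$ of the ranges and the overlap $A_{0}\setminus\{1-n\}=\{0,-1,\dots,2-n,-n\}$ between two of the families in Lemma \ref{rank1} each have to be checked once, to confirm that the two available descriptions of the pairings coincide and the telescoping stays exact.

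The only real difficulty is the bookkeeping: one must track, for every $f'\in\mathcal P^{(1)}_{n}$, which (two, or in one case $n+1$) Euler characters it pairs with, align the sign $(-1)^{n-1}$ and the parity-dependent coefficients appearing in $R$, and handle the boundary diagrams and the range endpoints carefully so that the alternating sums cancel exactly. Once the indexing is fixed these cancellations are forced, and nothing beyond Lemma \ref{rank1}, Theorem \ref{form} and Theorem \ref{Euler1} is needed.
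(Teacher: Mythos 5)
Your proposal is correct and follows essentially the same route as the paper: the paper also reduces each identity to checking $(ch\,P(B),R)=\delta_{B,f_0}$ for all relevant $B$, handling $B\notin\mathcal P^{(1)}_n$ by the definition of $\mathcal P^{(1)}_n$ and $B\in\mathcal P^{(1)}_n$ by the pairing table of Lemma \ref{rank1}, with the remaining cases "proved in the same manner." Your write-up merely makes explicit two points the paper leaves implicit — why only diagrams in $\mathcal P^{(1)}_n$ can contribute, and the telescoping of the alternating sums in parts (2) and (3) — and both of these details check out.
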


\begin{proof}  Let us prove the first statement. It is enough to check that 
\begin{equation}\label{one}
(P(A_0\setminus\{-n\}),E(\emptyset))=1,
\end{equation}
and
\begin{equation}\label{two}
 (P(B),E(\emptyset))=0,\, \text{for any}\, B,\,|B|=n,\,B\subset\Bbb Z_{\ge0},\,B\ne A.
\end{equation}
Equality (\ref{one}) follows from  Lemma \ref{rank1}. Besides if $B\notin \mathcal P^{(1)}_n$ then equality (\ref{two}) follows from  the definition of the set $\mathcal P^{(1)}_n$ and  if $B\in \mathcal P^{(1)}_n$ then this equality  follows from  Lemma \ref{rank1}. The other two statements can be proved in the same manner.
\end{proof}

 \begin{definition} Let us define a linear operator on  the   characters  of Kac modules by the formula
 $$
 T(ch\,K(A))=ch\,K(\omega(A)),\quad \omega(a)=a-1
 $$
 \end{definition}
 It is easy to see that in the category $\mathcal F^+$ the operator $T$ corresponds to tensor multiplication on one dimensional module with the character $\frac{y_1\dots y_n}{x_1\dots x_m}$.
Now we are going to describe the action of the linear operator $T$   on the Euler characters.
\begin{lemma} The following formulae hold true (we suppose that $A,B\subset\Bbb Z_{\le0})$
\begin{equation}\label{formula2}
T(ch\,E(B))=(-1)^{n+p}\left[ch\,E(\omega(B))-(-1)^pch\,E(\{0\}\cup\omega(B))\right]
\end{equation}
 where $p=|B|$ and we suppose that $E(B)=0$, if the number of elements in $B$ is strictly grater than $n$.
\end{lemma}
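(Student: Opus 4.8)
Plan of proof for the formula $T(ch\,E(B))=(-1)^{n+p}\left[ch\,E(\omega(B))-(-1)^pch\,E(\{0\}\cup\omega(B))\right]$.

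The idea is to work directly with the explicit product formula for Euler characters in the form used throughout Section 4. Recall that $ch\,E(B)$ for a diagram $B=(A,B)$ of the special type considered here ($A=B\subset\Bbb Z_{\le 0}$) is computed via the alternation $\{\cdot\}$ applied to a product $\prod(1+y_j/x_i)^{\pm1}x^{\tau}y^{\nu}x^{\rho_m}y^{\rho_n}$, after dividing by $\Delta(x)x^{\rho_m}\Delta(y)y^{\rho_n}$. The operator $T$ corresponds to tensoring by the one-dimensional module with character $\frac{y_1\cdots y_n}{x_1\cdots x_m}$; on characters of Euler modules it multiplies the relevant symmetric-function data by this monomial, which in terms of the labelling pair $(r,s)$ amounts to shifting $\tau\mapsto\tau+(1^r)$, $\nu\mapsto\nu+(1^s)$, and—crucially—changing which entries of the shifted weight land in the ``polynomial'' range $\Bbb Z_{\le n-m}$. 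First I would make this precise: write out $T(ch\,E(B))$ from formula (\ref{Eulerch}), multiply through by $\frac{y_1\cdots y_n}{x_1\cdots x_m}$, and reorganize the resulting alternation.

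The second step is the combinatorial heart. After the shift by $\omega$, the monomial $\frac{y_1\cdots y_n}{x_1\cdots x_m}$ pushes one index across the boundary: a Schur-type alternating sum over $m+n$ variables with a shifted exponent vector either stays a single Schur function or, via the standard ``straightening'' (antisymmetry) identity $\{x^\alpha\}=0$ when two exponents coincide and $\{x^\alpha\}=\pm\{x^\beta\}$ when $\beta$ is obtained from $\alpha$ by a transposition, splits as a signed difference of two terms corresponding to the two diagrams $\omega(B)$ and $\{0\}\cup\omega(B)$. Concretely, the extra factor forces an exponent into a position it may already occupy; resolving this by the Weyl-alternation relations produces exactly the two summands on the right-hand side, and the signs $(-1)^n$ and $(-1)^p$ come from (i) moving the new box past the $n$ odd variables and (ii) sorting the new element $0$ into the decreasing set $\omega(B)$, i.e. the sign $\varepsilon(\{0\},\omega(B))$ governed by Lemma \ref{sign}. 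I would track these two sign contributions separately and verify they assemble to $(-1)^{n+p}$ and $-(-1)^{n+p}(-1)^p$ respectively.

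The main obstacle I anticipate is bookkeeping the signs and the boundary condition $\Bbb Z_{\le n-m}$ correctly: one must check that when $|B|=n$ already, the term $E(\{0\}\cup\omega(B))$ is genuinely zero (the diagram has too many $\times$'s, consistent with the stated convention $E=0$ for $|B|>n$), and that when $0\notin\omega(B)$ the straightening does not accidentally vanish. A clean way to organize this, and the route I would actually take, is to pair formula (\ref{formula2}) against an arbitrary Kac character using Theorem \ref{KacEuler}: compute $(ch\,K(f),\,T(ch\,E(B)))$ on the left, and $(-1)^{n+p}[(ch\,K(f),ch\,E(\omega(B)))-(-1)^p(ch\,K(f),ch\,E(\{0\}\cup\omega(B)))]$ on the right, using that $T$ is adjoint (up to the explicit monomial) to $T^{-1}=$ tensoring by $\frac{x_1\cdots x_m}{y_1\cdots y_n}$ on the $K$-side, which just relabels $(f)$ by $\omega$. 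Since characters of Kac modules span $K(\mathcal F)$ (they form a basis, as noted after the Euler-character formula), equality of all these pairings forces (\ref{formula2}). This reduces the whole statement to a finite case analysis of $\varepsilon(f,g)$ under $f\mapsto\omega(f)$ and $C\mapsto\omega(C)$ versus $C\mapsto\{0\}\cup\omega(C)$, where the sign identities of Lemma \ref{sign} and the elementary identity $S(\omega(C))=S(C)-|C|$ do all the work.
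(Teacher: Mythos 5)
Your final plan---pairing both sides of (\ref{formula2}) against all Kac characters $ch\,K(A)$, using that multiplication by $\frac{y_1\cdots y_n}{x_1\cdots x_m}$ is adjoint under the form to the shift $A\mapsto\omega^{-1}(A)$ on the Kac side, and then reducing everything to a case analysis of the signs in Theorem \ref{KacEuler} (the cases $B\not\subset\omega^{-1}(A)$, $0\in A$, $0\notin A$) together with $S(\omega(C))=S(C)-|C|$---is exactly the paper's proof, including the justification via linear independence and orthonormality of Kac characters. The only small slip is the direction of the shift: the adjoint relabels $K(A)$ as $K(\omega^{-1}(A))$ rather than $K(\omega(A))$, which does not affect the argument.
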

\begin{proof} It is enough to prove the following equality for any set $A$ such that $|A|=n$
$$
(ch\,K(A),ch\,T(E(B)))=
$$
$$
(ch\,K(A),\,(-1)^{n+p}\left[ch\,E(\omega(B))-(-1)^pch\,E(\{0\}\cup\omega(B))\right])
$$
So let us calculate separately the left hand side and the right hand side. We have 
$$
(ch\,K(A), ch\,TE(B))=\left(ch\,K(A),\frac{y_1\dots y_n}{x_1\dots x_n}ch\,E(B)\right)=
$$
$$
\left(\frac{x_1\dots x_n}{y_1\dots y_n}ch\,K(A),ch\,E(B)\right)
$$
$$
=(ch\,K(\omega^{-1}(A)),ch\,E(B))
$$
But by Theorem \ref{KacEuler}  we have 
$$
(ch\,K(\omega^{-1}(A)),ch\,E(B))=\begin{cases}0,\,\,\text{if}\,\, B\not\subset\omega^{-1}(A),\,\text{or}\,\,0\in A\\
(-1)^{\frac12p(p-1)+\frac12n(n-1)+S(\omega^{-1}(A)\setminus B)}
\,\text{otherwise}\end{cases}
$$

Let $B\not\subset \omega^{-1}(A)$ then $\omega(B)\not\subset A $ and  $\omega(B)\not\subset \{0\}\cup A $. Therefore 
$$
\left(ch\,K(A),ch\,E(\omega(B)\right)=0,\,\, \left(ch\,K(A),ch\,E(\{0\}\cup\omega(B)\right)=0
$$
 Suppose that $0\in A$ and $B\subset \omega^{-1}(A)$. Then we have $\{0\}\cup\omega(B)\subset A$ and therefore
$$
\left(ch\,K(A),ch\,E(\omega(B)\right)=(-1)^{\frac12p(p-1)+\frac12n(n-1)+S(A\setminus\omega(B))}
$$
$$
 \left(ch\,K(A),ch\,E(\{0\}\cup\omega(B)\right)=(-1)^{\frac12p(p+1)+\frac12n(n-1)+S(A\setminus\omega(B)\cup\{0\})}
$$
So formulae (\ref{formula2}) holds true in this case. 

Now consider the last possible case $0\notin A$ and $B\subset \omega^{-1}(A)$. In this case we have $\{0\}\cup\omega(B)\not\subset A$. Therefore 
$$
 \left(ch\,K(A),ch\,E(\{0\}\cup\omega(B)\right)=0
$$
and
$$
 \left(ch\,K(A),ch\,E(\omega(B)\right)=(-1)^{\frac12p(p-1)+\frac12n(n-1)+S(A\setminus\tau(B))}
$$
Lemma is proved.
\end{proof} 

Now consider the  case of the most atypical  bloc for Lie superalgebra $\frak{gl}(2,2)$ in the category $\mathcal F^+$. In order to give a reasonable description of the irreducible  characters  in this bloc  we need some special type of graphs.

\begin{definition} Let $n\in Z_{\ge0},\,m\in\Bbb Z_{\ge1}$. Let us denote by $\Gamma_{n,m}$ the graph with $n+m$ vertices  that are integers from the segment $[1-n-m,0]$ such that :

$1)$ there exists exactly one edge containing  any two vertices from $[1-n, 0]$;

$2)$ there exists exactly one edge joining every vertex from $[1-n, 0]$ with every vertex from $[-n, 1-n-m]$.

\end{definition}

\begin{definition} For every graph $\Gamma=\Gamma_{n,m}$ let us define the following element of the Grothendieck ring by the formula
$$
\chi(\Gamma)=ch\,E(\emptyset)-\sum_{v}\varepsilon(v)ch\,E(v)-\sum_{e}\varepsilon(e)ch\,E(e)
$$
where $\varepsilon (v)=(-1)^{i}$ if $v=\{i\}$  and $\varepsilon(e)=\varepsilon(v)\varepsilon(u)$ if $e$ contains $v,u$.
\end{definition}
\begin{remark}  It is convenient  to define $\chi(\Gamma_{n,m})$ in the case when $n=-1$.  In such a case we set $\chi(\Gamma_{n,m})=E(\emptyset)$.
\end{remark}
\begin{example} 
$$
\chi(\Gamma_{-1,3})=ch\,E(\emptyset)
$$
$$
\chi(\Gamma_{2,1})=ch\,E(\emptyset)-ch\,E(0)+ch\,E(-1)-ch\,E(-2)+
$$
$$
ch\,E(0,-1)-ch\,E(0,-2)+ch\,E(-1,-2)
$$
$$
\chi(\Gamma_{0,2})=ch\,E(\emptyset)-ch\,E(0)+ch\,E(-1)
$$
\end{example}
\begin{thm} The following equalities hold true
\begin{equation}\label{first}
ch\,L(a,a-1)=\chi(\Gamma_{|a|-1,1}),\,\, a\le0
\end{equation}
\begin{equation}\label{second}
ch\,L(a,b)=(-1)^{a-b-1}\left[\chi(\Gamma_{|a|-1,1})+\chi(\Gamma_{|a|,a-b})\right],\,\, a-b\ge2,\,a\le0
\end{equation}
\end{thm}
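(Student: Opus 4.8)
The plan is to argue by induction on $|a|$, taking the rank-one formulae of Corollary~\ref{graph} (specialised to $\frak{gl}(2,2)$, i.e.\ $n=2$) as the base of the induction and propagating them with the shift operator $T$. Recall that in $\mathcal F^+$ the operator $T$ is multiplication by $\frac{y_1y_2}{x_1x_2}$, that is, tensoring with a one-dimensional (hence partially polynomial) module; thus $T$ sends $ch\,L(A)$ to $ch\,L(\omega(A))$, keeps us inside the block under consideration, and preserves the difference $a-b$ of the two entries of a diagram $\{a,b\}\subset\Bbb Z_{\le0}$. Hence it is enough to verify the two formulae for $a=0$ and then to check that one application of $T$ carries the right-hand side at level $a+1$ to the right-hand side at level $a$ (keeping $d=a-b$ fixed; the case $d=1$ is (\ref{first}) and $d\ge2$ is (\ref{second})).

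For $a=0$ I would use Corollary~\ref{graph}. Its first item gives $ch\,L(0,-1)=ch\,E(\emptyset)=\chi(\Gamma_{-1,1})$, which is (\ref{first}) for $a=0$. For $b\le-2$ its third item expresses $ch\,L(0,b)$ as an explicit alternating combination of $ch\,E(i)$, $i\in[b+1,0]$, and $ch\,E(\emptyset)$; a short rearrangement, using that $\Gamma_{0,d}$ has no edges, so that $\chi(\Gamma_{0,d})=ch\,E(\emptyset)-\sum_{i=1-d}^{0}(-1)^{i}ch\,E(i)$, together with the convention $\chi(\Gamma_{-1,1})=ch\,E(\emptyset)$, identifies it with $(-1)^{-b-1}\bigl[\chi(\Gamma_{-1,1})+\chi(\Gamma_{0,-b})\bigr]$, which is (\ref{second}) for $a=0$ and $d=-b$.

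The inductive step rests on the single identity
$$
T\bigl(\chi(\Gamma_{n,m})\bigr)=\chi(\Gamma_{n+1,m})\qquad(n\ge0,\ \text{and also }n=-1,\ m=1),
$$
which I would prove straight from formula (\ref{formula2}) with $n=2$. The three instances needed are
$$
T(ch\,E(\emptyset))=ch\,E(\emptyset)-ch\,E(\{0\}),\qquad T(ch\,E(\{v\}))=-ch\,E(\{v-1\})-ch\,E(\{0,v-1\}),
$$
$$
T(ch\,E(\{u,v\}))=ch\,E(\{u-1,v-1\}),
$$
the three-element term disappearing because $E(B)=0$ for $|B|>2$ in $\frak{gl}(2,2)$. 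Writing $\chi(\Gamma_{n,m})$ as the signed sum of $ch\,E(\emptyset)$, of $ch\,E(v)$ over the vertices and of $ch\,E(e)$ over the edges of $\Gamma_{n,m}$, one sees that $T$ shifts every vertex and every edge down by one, introduces the new vertex $\{0\}$, and creates one new edge $\{0,v-1\}$ for each old vertex $v$. One then checks that this is exactly $\Gamma_{n+1,m}$: the vertex set $[1-n-m,0]$ shifted down, together with $\{0\}$, is $[-n-m,0]$; and the edges of $\Gamma_{n+1,m}$ split into the pairs inside $[-n,-1]$ together with the pairs between $[-n,-1]$ and $[-n-m,-n-1]$ (these are the down-shifts of the edges of $\Gamma_{n,m}$) and, separately, all pairs $\{0,w\}$ with $w\in[-n-m,-1]$ (these are the newly created edges). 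Signs agree because $\varepsilon(v-1)=-\varepsilon(v)$ absorbs the minus sign in the $|B|=1$ relation and because $\varepsilon(e)=\varepsilon(u)\varepsilon(v)$. Granting this identity: if (\ref{first}) (resp.\ (\ref{second})) holds at level $a+1\le0$, then applying $T$, using $T(ch\,L(A))=ch\,L(\omega(A))$, the equality $|a+1|=|a|-1$, and the identity once (resp.\ twice) yields it at level $a$, which completes the induction.

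The main obstacle is precisely the bookkeeping in the displayed identity: one has to verify that each vertex and each edge of $\Gamma_{n+1,m}$ is produced exactly once and with the correct sign, and in particular that the edges incident to the new vertex $0$ come only from the vertex terms of $\Gamma_{n,m}$ — no down-shifted edge of $\Gamma_{n,m}$ is incident to $0$ — so that they combine with the down-shifted old edges to reconstitute $\Gamma_{n+1,m}$ without overlap. The vanishing $E(B)=0$ for $|B|>2$ and the multiplicativity of $\varepsilon$ are exactly what make this matching clean; the degenerate conventions ($\chi(\Gamma_{-1,1})=ch\,E(\emptyset)$, the absence of edges in $\Gamma_{0,m}$) and the dichotomy $d=1$ versus $d\ge2$ have to be dealt with by direct inspection.
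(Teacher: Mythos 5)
Your proposal is correct and follows essentially the same route as the paper: base case $a=0$ from Corollary \ref{graph}, propagation by the shift operator $T$ via the identity $T(\chi(\Gamma_{n,m}))=\chi(\Gamma_{n+1,m})$, itself derived from the three instances of (\ref{formula2}) for $n=2$. The only difference is that you spell out the vertex/edge bookkeeping behind $T(\chi(\Gamma_{n,m}))=\chi(\Gamma_{n+1,m})$, which the paper leaves as "not difficult to verify."
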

\begin{proof}  We are going to use  the functor  $T$.
In the case of $n=2$ the functor acts by the following formulae
$$
T(ch\,E(\emptyset))=ch\,E(\emptyset)-ch\,E(0),\,\, 
$$
$$
T(ch\,E(a))=-ch\,E(a-1)-ch\,E(0,a-1)
$$
and
$$
T(ch\,E(a,b))=ch\,E(a-1,b-1)
$$
It is not difficult to verify the following equality
$$
T(\chi(\Gamma_{n,m}))=\chi(\Gamma_{n+1,m})
$$
Further  we see that $ch\,L(a,a-1)=T^{|a|}(ch\,E(\emptyset))$.  We will prove equality (\ref{first}) induction on $|a|$ . If $a=0$ the equality is trivial $ch\,L(0,-1)=ch\,E(\emptyset)$. Let $|a|>0$ then we have 
$$
T^{|a|}(ch\,E(\emptyset))=T(T^{|a|-1}(ch\,E(\emptyset)))=T(\chi(\Gamma_{|a|-2,1}))=\chi(\Gamma_{|a|-1,1})
$$

Now let us prove  equality  (\ref{second}) also induction on $|a|$. If $a=0$ and $b\le-2$ then by corollary (\ref{graph}) we have 
$$
ch\,L(0,b)=(-1)^{b+1}(2ch\,E(\emptyset)-ch\,E(0)+\dots+
$$
$$
(-1)^{b}ch\,E(b+1))=(-1)^{b+1}[ ch\,E(\emptyset)+\chi(\Gamma_{0,|b|})]
$$ 
If we  apply to both sides of the above formula functor $T^r$  then we get
$$
ch\,L(-r,b-r)=(-1)^{b+1}T^r[ ch\,E(\emptyset)+\chi(\Gamma_{0,|b|})]=
$$
$$
(-1)^{b+1}[\chi(\Gamma_{r-1,1})+\chi(\Gamma_{r,|b|})]
$$
If we replace $r$ by $-a$ and $b$ by $b-a$ we get the statement.
\end{proof}
\section{Acknowledgments}
This work was supported by the Ministry of Science and Higher Education of the Russian Federation in the framework of the basic part of the scientific research state task, project FSRR-2020-0006.

\end{document}